\pgfplotsset{compat=1.15}
\DeclarePairedDelimiter\norm{\lVert}{\rVert}
\DeclarePairedDelimiter\abs{|}{|}
\newtheorem{theorem}{Theorem}[section]
\newtheorem{corollary}[theorem]{Corollary}
\newtheorem{lemma}[theorem]{Lemma}
\newtheorem{proposition}[theorem]{Proposition}
\newtheorem{observation}[theorem]{Observation}
\theoremstyle{remark}
\newtheorem*{remark}{Remark}
\newtheorem*{remarks}{Remarks}
\newcommand{\comment}[1]{}
\theoremstyle{definition}
\newtheorem{definition}{Definition}[section]
\newtheorem{notation}{Notation}
\begin{document}

\title[Tiling of rectangles with squares via Diophantine approximation]
{Tiling of rectangles with squares and related problems via Diophantine approximation}

\author[T.~Keleti]{Tam\'as Keleti}
\address{Institute of Mathematics, E\"otv\"os Lor\'and University, P\'azm\'any P\'eter S\'et\'any 1/c, H-1117 Budapest, Hungary}
\email{tamas.keleti@gmail.com}
\thanks{This research started at a ``Conjecture and Proof''
course of BSM (Budapest Semester in Mathematics)
and continued at a research course of BSM. 
The first author is grateful to the Alfr\'ed R\'enyi Institute, where he was a visiting researcher during the completion of this project.
He was also supported by the Hungarian National Research, Development and Innovation Office - NKFIH, 124749 and 129335.}

\author[S.~Lacina]{Stephen Lacina}
\address{Department of Mathematics, University of Oregon, Fenton Hall, Eugene, Oregon}
\email{slacina@uoregon.edu}

\author[C.~Liu]{Changshuo Liu}
\address{Department of Mathematics, Princeton University, Fine Hall, New Jersey, United States.}
\email{cl20@alumni.princeton.edu}

\author[M.~Liu]{Mengzhen Liu}
\address{Department of Pure Mathematics and Mathematical Statistics, University of Cambridge. CB3 0WB, Centre for Mathematical Sciences, Wilberforce Road, Cambridge, UK.}
\email{ml868@cam.ac.uk}

\author[J.~R.~Tuir\'an Rangel]{Jos\'e Ram\'on Tuir\'an Rangel}
\address{Facultad de Ciencias, Universidad Nacional Aut\'onoma de M\'exico, Ciudad Universitaria, Mexico City, Mexico.}
\email{joseramont@ciencias.unam.mx}


\subjclass[2020]{05B45, 52C20, 52C22, 11J13}

\keywords{Tilings, rectangles, squares, cuboids, Diophantine approximation}

\begin{abstract}
This article shines new light on the classical problem of tiling rectangles with squares efficiently with a novel method. With a twist on the traditional approach of resistor networks, we provide new and improved results 
on the matter using the theory of Diophantine Approximation, hence overcoming long-established difficulties, such as generalizations to higher-dimensional analogues. The universality of the method is demonstrated through its applications to different tiling problems. These include tiling rectangles with other rectangles, with their respective higher-dimensional counterparts, as well as tiling equilateral triangles, parallelograms, and trapezoids with equilateral triangles.
\end{abstract}
\maketitle

\section{Introduction}


Tiling rectangles by squares has been a popular topic for more than 100 years. The first fundamental result is 
the following.

\begin{theorem}[M. Dehn \cite{Dehn}, 1903] \label{maxdehntheorem}
If a rectangle is tiled by squares then
the 
{aspect ratio} of the rectangle is rational.
\end{theorem}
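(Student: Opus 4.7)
The plan is to run the classical Dehn functional argument. Suppose, for contradiction, that $R$ has sides $a$ and $b$ with $a/b \notin \mathbb{Q}$. Then $a$ and $b$ are linearly independent over $\mathbb{Q}$, so I can extend $\{a,b\}$ to a Hamel basis of $\mathbb{R}$ viewed as a $\mathbb{Q}$-vector space and define a $\mathbb{Q}$-linear map $f \colon \mathbb{R} \to \mathbb{R}$ by $f(a) = 1$, $f(b) = -1$, and $f \equiv 0$ on every other basis vector.

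Next I would introduce a pseudo-area: for an axis-parallel rectangle with width $w$ and height $h$, set $\tilde A(w,h) := f(w) f(h)$. Because $f$ is only $\mathbb{Q}$-linear, $\tilde A$ is not the usual area, but its bilinear structure makes it additive under axis-parallel subdivisions, which is precisely the additivity that the tiling hypothesis can feed into.

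The combinatorial step is to extend every vertical and horizontal edge appearing in the tiling across all of $R$. This refines $R$ into an $m \times n$ grid with column widths $w_1,\dots,w_m$ and row heights $h_1,\dots,h_n$, and simultaneously partitions each tile square $S$ of side $s_S$ into a rectangular sub-block of this grid, so that $s_S = \sum_{i \in I_S} w_i = \sum_{j \in J_S} h_j$ for suitable index sets $I_S, J_S$. Using $\mathbb{Q}$-linearity and expanding,
\[
f(s_S)^2 \;=\; \Bigl(\sum_{i \in I_S} f(w_i)\Bigr)\Bigl(\sum_{j \in J_S} f(h_j)\Bigr) \;=\; \sum_{(i,j) \in I_S \times J_S} f(w_i) f(h_j).
\]
Since each grid cell $(i,j)$ lies in exactly one square, summing over all tiles gives
\[
\sum_{S} f(s_S)^2 \;=\; \sum_{i=1}^{m} \sum_{j=1}^{n} f(w_i) f(h_j) \;=\; f(a)\,f(b) \;=\; -1,
\]
where in the last step I used $\sum_i f(w_i) = f(a)$ and $\sum_j f(h_j) = f(b)$, again by $\mathbb{Q}$-linearity. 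The left-hand side is a sum of squares of real numbers, hence non-negative, which is the required contradiction.

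The main obstacle is the bookkeeping in the third step: one must verify that the extended-edge grid genuinely refines the tiling, meaning each grid cell lies inside exactly one tile square and each tile square is the union of a rectangular sub-block of cells, so that the products $I_S \times J_S$ partition $\{1,\dots,m\} \times \{1,\dots,n\}$ as $S$ ranges over the tiles. Once this is spelled out, the algebraic identity and the sign contradiction finish the proof in essentially one line. Everything else (existence of the Hamel basis, $\mathbb{Q}$-linear extension, bilinearity of $\tilde A$) is standard.
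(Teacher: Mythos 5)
Your proof is correct — it is the classical Dehn-functional argument using a Hamel basis of $\mathbb{R}$ over $\mathbb{Q}$ and a $\mathbb{Q}$-bilinear pseudo-area, and the grid-refinement bookkeeping you flag as the main obstacle is a standard and routine verification (extending all tile edges across $R$ yields a grid in which each cell lies in exactly one tile and each tile is a rectangular block of cells). However, it is deliberately not the route this paper takes. Section~\ref{maxdehnsection} gives a new proof of Dehn's theorem precisely to showcase the Diophantine machinery used throughout the paper: the argument there counts $\frac12$-shifted horizontal and vertical grid lines through each tile (Observations~\ref{obs} and~\ref{keylemma}) and invokes the simultaneous version of Dirichlet's approximation theorem (Theorem~\ref{approximation}) to rescale the tiling so that every vertex coordinate is within $\frac14$ of an integer, which forces the two line counts through each square to agree and exhibits the aspect ratio as a ratio of two nonnegative integers. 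The contrast is informative: your Hamel-basis argument is self-contained and short, but it requires the axiom of choice and is purely qualitative, telling you the ratio is rational without any control on the denominator. The paper's Diophantine proof is choice-free, is inherently quantitative (the Dirichlet bound $q \le N^k$ is exactly what later drives Theorems~\ref{intro_tilebysquare}, \ref{intro_scaleqrect}, \ref{h:intro_hypercubesbound}, and~\ref{t:intro_strongt}), and, crucially, generalizes to hypercuboid and equilateral-triangle tilings, where the $\mathbb{Q}$-bilinear pseudo-area trick has no obvious analogue but the counting-and-approximation machinery carries over.
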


A variety of proofs of Theorem \ref{maxdehntheorem} 
have been discovered since then, with a particular standout being from a paper by R. L. Brooks et al. \cite{BSST} in 1940. This paper presents a one-to-one correspondence between a squared rectangle and a resistor network, with any solution to Kirchoff's equations being exactly the sizes of the square tiles. Using the physical theory of resistor networks, the problem is easily transformed into a graph theoretic one. In fact, Dehn proved 
a stronger version of Theorem~\ref{maxdehntheorem} (later referred to as  ``Strong Dehn's Theorem"
) \cite{Dehn}: If a rectangle is tiled by squares then the side lengths of the squares are all rational
multiples of the side lengths of the rectangle.

A natural next step of development is to investigate the possible rational numbers that could represent the ratio of the sides of a particular tiling; for example, is there anything one could conclude about the ratio given the number of tiles used? Specifically, two versions of this problem are well-studied historically:
\begin{enumerate}
    \item $[$Coprime Rectangle Tiling$]$ How many square tiles of integer side lengths are needed to tile a rectangle of size $p\times q$, where $p,q\in \mathbb N^+$ and $(p,q)=1$?
    \item $[$Prime Tiling, also known as \textit{Mrs Perkins's quilt}$]$ How many square tiles of jointly coprime (integer) side lengths are needed to tile a rectangle of size $p\times q$, where $p,q\in \mathbb N^+$?
\end{enumerate}

It should be more or less clear that Prime Tiling is a more general problem than Coprime Rectangle Tiling, as any integer tiling of a coprime rectangle $p\times q$ cannot have common factors among the side lengths of its tiles: otherwise this would also be a common factor between $p$ and $q$. Therefore, any result on Prime Tiling automatically applies to Coprime Rectangle Tiling. 
For each of these two 
questions, many lower and upper bounds on the number of tiles needed were established in the past endeavors.

Regarding the Coprime Rectangle Tiling problem, Kenyon \cite{Kenyon} (1996) continues to utilize the theory of resistor networks to give an upper bound on the number of tiles needed. It is proved that for any coprime $p,q$ with $p>q$, 
a $p\times q$ rectangle can be tiled using less than $p/q+C\log(p)$ squares with integer side lengths.

As for the Prime Tiling problem (hence also Coprime Rectangle Tiling), Conway \cite{Conway} (1964) established 
a lower bound on the number of tiles for a Prime Tiling using the resistor approach yet again: a 
prime tiling of a $p\times q$ rectangle requires at least $\log_2(\max\{p,q\})$ tiles. 
Conway's result has been recently improved by D.~Fomin \cite{Fomin}, who proved the lower bound $\log_\tau(\max\{p,q\})$, where $\tau\approx 1.8637\ldots$.




However, as noted 
at the end of both \cite{BSST} and \cite{Kenyon}, the very natural higher dimensional, or even $3$-dimensional, analogues of the above two 
problems of tiling hypercuboids with hypercubes are very difficult to attack. This is largely 
due to the fact that ``there is no satisfactory analogue [of resistor networks] in three dimensions,'' as stated in \cite{BSST}. 
One of the best results in higher dimensions so far is 
found in \cite{Walters} by M. Walters in 2009, who proves an upper bound of $(\log p)^{O(d)}$ and a lower bound of $\Omega((\log p)^{d/2})$ for the $d$-dimensional analogue of the Coprime Rectangle Tiling problem where $p$ denotes the longest side length. While this upper bound could be worked into an 
upper bound in the higher dimensional Prime Tiling problem, the lower bound simply does not extend to the higher dimensional Prime Tiling problem as we shall see later in this introduction in a counterexample of a prime tiling of hypercubes whose 
number of tiles is linear in $\log(p)$. 
One will 
also notice that the lower bound in \cite{Walters} is obtained by taking a $2$-dimensional cross section of the hypercube tiles, circumventing but not resolving the issue noticed above in \cite{BSST} and \cite{Kenyon}.



Our goal is to give lower bounds to the Prime Tiling problem, i.e. \textit{Mrs Perkins's quilt}, both in $2$-dimensions and higher dimensions, as well as lower bounds for 
natural related questions about some more general tilings. To achieve this, we first rephrase the 
problem at hand, 
switching to a completely new perspective, and introduce novel techniques to the problem to avoid the known difficulties.

We formally state the Prime Tiling problem, i.e. \textit{Mrs Perkins's quilt} here again:\\
$(1)$ \emph{What is the minimal number of square tiles with integer side lengths required to tile a $p\times q$ rectangle, given that the lengths of the square tiles are jointly coprime?}

While this is a perfectly valid and natural problem, we study an equivalent one that is phrased quite differently:\\
$(2)$ \emph{Suppose a rectangle is tiled by $n$ squares. How large is the minimal-sized scaling of the rectangle 
such that 
all of its square tiles 
have integer side lengths?}

These are equivalent questions in the sense that, a lower bound for the minimal number of square tiles required is equivalent to an upper bound on the size of the scaling. For example, the above mentioned $\log_2(\max\{p,q\})$ estimate of Conway in \cite{Conway} is equivalent to the statement that
if a rectangle is tiled by $n$ squares then there is a scaling of it with longest side length at most $2^n$ such that all square tiles have integer side lengths. In fact, (2) can even be considered as a quantitative strong Dehn's Theorem, while estimates for (1) only imply estimates for (2) by using strong Dehn's theorem.

The difference in 
the perspectives of these equivalent problems 
allows us to think outside of the box and apply new
techniques that otherwise would never be considered. In our paper, we develop such a technique through a brand new approach to this classical problem, and apply it in various settings, demonstrating its universality in dealing with such tiling problems. In the classical case of \textit{Mrs Perkins's quilt} we prove
the following:

\begin{theorem}\label{intro_tilebysquare}
For any 
{tiling of a rectangle by n squares}, one can find a scaling of it with longest side length at most $4^{n}$ such that all squares have integer side lengths. 
\end{theorem}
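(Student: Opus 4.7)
The plan is to first reduce the statement to a purely combinatorial bound via Strong Dehn's Theorem, and then run an induction on $n$ in which the key step is controlled by a Diophantine approximation argument.

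By Strong Dehn's Theorem (cited in the excerpt), the side lengths of the squares and of the rectangle are all commensurable, so after rescaling we may assume the squares have coprime positive integer sides $s_1,\dots,s_n$ and the rectangle has integer dimensions $L \times W$ with $L \geq W$. The theorem is then equivalent to proving $L \leq 4^n$, and I would prove this by induction on $n$. The base case $n=1$ is trivial (the rectangle is a unit square), so the task reduces to showing that any $n$-square tiling can be turned into a tiling with strictly fewer squares, in such a way that the longest side grows by at most a factor of $4$ along the way.

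To locate a square to remove, I would exploit the fact that the ratios $s_i/L$ and $W/L$ lie in a one-dimensional rational subspace of $\mathbb{R}^{n+1}$, cut out by $\{-1,0,1\}$-valued linear equations that come from horizontal and vertical cross-sections of the tiled rectangle (the sum of widths across any horizontal line equals $L$, and similarly for vertical lines). A Dirichlet-type Diophantine bound applied to this subspace produces a good rational approximation to the parametrising variable of the tiling, and the factor of $4$ in the theorem should arise because each square contributes both a horizontal and a vertical constraint, two $\{-1,0,1\}$-relations each of ``cost'' $2$.

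The main obstacle will be the geometric realisation of the reduction. Deleting an arbitrary square naively leaves a hole that cannot be refilled by any squared tiling, so the Diophantine bound alone does not suffice. It must be paired with a combinatorial or geometric argument identifying a square whose removal can be compensated by a controlled shear or merging of its neighbours, preserving integrality of the remaining side lengths and inflating denominators by at most a factor of $4$. This compensation step, rather than the Diophantine inequality itself, is where I expect the technical heart of the argument to lie; avoiding the resistor-network formalism at this stage is also presumably what allows the same strategy to extend to the higher-dimensional settings advertised in the introduction.
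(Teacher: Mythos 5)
Your proposal diverges fundamentally from the paper's argument, and the step you yourself flag as the ``technical heart'' is precisely where it breaks down irreparably. The paper does not reduce modulo Strong Dehn's Theorem, does not induct on $n$, and never removes a square. Deleting a square tile generically leaves a non-retileable L-shaped hole, and there is no controlled ``shear or merging of neighbours'' operation that repairs this while inflating denominators by a bounded factor; this is not a gap to be filled later, it is an obstruction with no known mechanism, and none of the structure you describe (the one-dimensional rational kernel of the $\{-1,0,1\}$ cut equations) suggests one. Moreover, an inductive removal strategy would be \emph{worse} in higher dimensions, not better, so it cannot be what underwrites the extensions claimed in the introduction.

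The actual argument is a single non-inductive scaling step, and its crux is a rigidity statement you do not have. Take the tiled rectangle (with arbitrary real coordinates --- Dehn's theorem is \emph{not} assumed), normalize so its longest side runs from $(0,0)$ to $(0,1)$, and let $S$ be the set of all $x$- and $y$-coordinates of vertices of tiles, excluding $0$ and $1$. A combinatorial lemma (Lemma~\ref{l:coordvaluesforrectangles}) shows the total number of distinct coordinates is at most $n+3$, hence $|S| \le n$. Applying Dirichlet's simultaneous approximation theorem to $S$ with $N=4$ produces an integer $q \le 4^{|S|} \le 4^n$ with $\|qx\| < \tfrac14$ for all $x \in S$. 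Scaling by $q$ then puts every vertex coordinate within $\tfrac14$ of an integer and keeps the endpoints of the normalized side at the integers $0$ and $q$. The decisive ingredient is then Theorem~\ref{intro_keycorollary}: if one pair of opposite sides of the rectangle sits at integer heights and every vertex coordinate of every tile is within $\tfrac14$ of an integer, then all tile side lengths \emph{are} integers. This near-integrality-implies-integrality rigidity is proved via a discrete Kirchhoff-type identity (Lemma~\ref{l:insteadofnetworks}), and it is what converts the Dirichlet estimate into the theorem. Your sketch contains neither this rigidity result nor the coordinate-count lemma, and the Diophantine approximation is applied to the wrong object (normalized side ratios rather than the raw vertex coordinates); the resulting factor-of-$4$ heuristic does not correspond to anything in the actual proof.
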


This result translates to a lower bound of $\frac{1}{2}\log_2(\max\{p,q\})$ in the Prime Tiling problem. Although the new machinery 
gives weaker results in this case compared to Conway and Fomin, we shall see that it is able to break many barriers which 
the classical method 
has thus far failed to overcome.

\comment{{
As a corollary, we obtain a partial answer to a question on MathOverflow \cite{MOPost}. Let $f(p,q)$ denote the least number of square tiles of integer side-lengths needed in a tiling. One clearly has $f(kp,kq)\leq f(p,q)$ when $k\in \mathbb N$; however it is not clear if the reverse inequality holds. While some convincing yet unverified candidates of counterexamples were presented in \cite{MOPost} suggesting a possible negative to the claim, we provide a weak positive partial result:}
\begin{corollary}
{
Let $p\geq q$, $p,q\in \mathbb N$ and $(p,q)=1$. Suppose $f(kp,kq)> n$ for all $k\in \mathbb N$ such that $kp\leq 4^n$, then $f(kp,kq)> n$ for all $k\in \mathbb N$.} 
\end{corollary}

{
In particular, if $f(p,q)=f(kp,kq)$ for all $k\leq 4^{f(p,q)-1}/p$, then $f(p,q)=f(kp,kq)$ for all $k\in\mathbb N$. This result hence restricts the search for counterexamples to a finite problem.}}

In order to prove Theorem~\ref{intro_tilebysquare},
we demonstrate the following result, which seems to be interesting in itself:

\begin{theorem}\label{intro_keycorollary}
Suppose a rectangle 
{is tiled by squares and both $x$-coordinates 
or both $y$-coordinates of the vertices of the rectangle are integers.}
If each coordinate of every vertex of each square tile 
has distance less than $\frac 14$ to
an integer, then all square tiles have integer side lengths.
\end{theorem}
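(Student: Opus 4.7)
The plan is to exploit the Brooks--Smith--Stone--Tutte (BSST) correspondence between squared rectangles and current flows in unit-resistance electrical networks, combined with a natural ``rounding'' of tile sides to nearest integers. Up to translation, assume both $x$-coordinates of the rectangle are integers and write it as $[0,w]\times[y_1,y_2]$ with $w\in\mathbb{Z}_{>0}$. For each square tile $T$ with bottom-left corner $(a_T,b_T)$ and side $s_T$, the hypothesis gives that each of $a_T,a_T+s_T,b_T,b_T+s_T$ is within $\tfrac14$ of a unique integer, which we denote $\langle\cdot\rangle$. Define
\[
k_T := \langle a_T+s_T\rangle-\langle a_T\rangle = \langle b_T+s_T\rangle-\langle b_T\rangle;
\]
the two expressions agree since both equal the nearest integer to $s_T$ (at distance $<\tfrac12$). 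Set $\epsilon_T:=s_T-k_T\in(-\tfrac12,\tfrac12)$. The goal is to show $\epsilon_T=0$ for every $T$.

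Attach to the tiling the BSST graph $G$: nodes are the maximal horizontal segments; each tile $T$ is an edge from its bottom segment $\mathrm{bot}(T)$ to its top segment $\mathrm{top}(T)$; the tile side $s_T$ is the current on this edge; and the voltage at a node $v$ is its $y$-coordinate $y_v$. Then $s_T$ obeys Ohm's law with unit resistors ($s_T=y_{\mathrm{top}(T)}-y_{\mathrm{bot}(T)}$) and Kirchhoff's current law at each interior node. Now define the ``fractional-part'' voltages $U_v:=y_v-\langle y_v\rangle\in(-\tfrac14,\tfrac14)$. A direct computation shows $\epsilon_T=U_{\mathrm{top}(T)}-U_{\mathrm{bot}(T)}$, i.e.\ Ohm's law holds for $\epsilon$; and Kirchhoff's law for $\epsilon$ reduces by linearity to Kirchhoff's law for the $k_T$. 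To check the latter at any interior horizontal segment $\sigma$ with $x$-range $[x_a,x_b]$, observe that the tiles above $\sigma$ tile $[x_a,x_b]$ contiguously in $x$, so their $k_T=\langle a_T+s_T\rangle-\langle a_T\rangle$ telescope to $\langle x_b\rangle-\langle x_a\rangle$; the same computation works for the tiles below $\sigma$.

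By the classical rigidity of unit-resistance electrical networks, the Kirchhoff--Ohm flows between two terminals on a connected graph form a one-dimensional space, parametrized by the total source current. It therefore suffices to compute the source current of $\epsilon$ at the bottom terminal, i.e.\ the sum of $\epsilon_T$ over tiles adjacent to the bottom edge:
\[
\sum_{T\text{ on bottom}} \epsilon_T = \sum s_T - \sum k_T = w - (\langle w\rangle-\langle 0\rangle) = w - w = 0,
\]
where we use crucially that both $0$ and $w$ are integers. Rigidity forces $\epsilon\equiv0$ on $G$, hence $s_T=k_T\in\mathbb{Z}$ for every tile $T$.

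The main technical content is the telescoping identity that establishes Kirchhoff's law for the $k$-currents at each internal horizontal segment, which is where the $\tfrac14$-closeness of the $x$-coordinates to integers is used; the $y$-closeness is used so that $\langle y_v\rangle$ is well-defined for every horizontal line. The integrality of the $x$-coordinates of the rectangle enters only at the very end, turning the ``close-to-integer'' hypothesis into the exact cancellation that pins down the scalar of proportionality to zero. As a pleasant byproduct, since $\epsilon\equiv0$ forces the $U$-voltages to be constant, one deduces that $y_2-y_1$ is itself an integer.
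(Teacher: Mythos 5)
Your proof is correct, and its underlying idea is the one the paper uses, but you outsource the key rigidity step whereas the paper reproves it from scratch. Concretely: your observation that $k_T=\langle a_T+s_T\rangle-\langle a_T\rangle=\langle b_T+s_T\rangle-\langle b_T\rangle$ is exactly the paper's Observation~\ref{keylemma}; your check that the $k_T$ obey Kirchhoff's current law by telescoping in $x$ is what Observation~\ref{obs} and Lemma~\ref{maintheorem} package into the hypothesis of Lemma~\ref{l:insteadofnetworks}; and your appeal to ``classical rigidity of unit-resistance electrical networks'' (one-dimensionality of Kirchhoff--Ohm flows on a connected two-terminal network) is precisely what Lemma~\ref{l:insteadofnetworks} reproves, self-contained, via a Dirichlet-energy identity: with $\delta(x)=\lfloor x+\tfrac12\rfloor-x$ (your $U_v$), the sum $\sum_i \tfrac{b_i-a_i}{d_i-c_i}(\delta(d_i)-\delta(c_i))^2$ is shown to telescope to zero, forcing $\delta(d_i)=\delta(c_i)$ for all $i$. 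The tradeoff is clear: your route is shorter because it cites BSST, but you then tacitly rely on the connectedness of the BSST graph and on the discrete Dirichlet uniqueness theorem as external inputs; the paper's route is longer but self-contained, and Lemma~\ref{l:insteadofnetworks} is stated for general rectangle tilings, which the paper reuses later (Theorem~\ref{rectangletheorem} and the hypercuboid section) without further ado. Your closing remark that $y_2-y_1$ must be an integer is also correct and matches the paper's stronger conclusion in Lemma~\ref{maintheorem} that each square's side equals the number of $\tfrac12$-shifted horizontal grid lines through it.
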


In order to use Theorem~\ref{intro_keycorollary} to obtain Theorem~\ref{intro_tilebysquare}, we 
will use the Simultaneous Version of Dirichlet's Approximation Theorem (Theorem~\ref{approximation})
to re-scale the tiled rectangle and ensure that
all coordinates of all tiling squares 
have distance less than $\frac 14$ to 
an integer.
To optimize the scaling factor as much as we can, we provide a sharp estimate (Lemma~\ref{l:coordvaluesforrectangles}) for the number of distinct coordinates in a rectangle tiling.

Although we prove Theorem~\ref{intro_keycorollary} from scratch, the proof was partly inspired 
by the resistor network technique mentioned above (see more explanation before Lemma~\ref{l:insteadofnetworks}).

In fact, the previously mentioned result of Fomin \cite{Fomin} is more general:
he proved that if a rectangle of size $p\times q$ is tiled by smaller rectangles of side length ratio $p_i/q_i$ with $(p_i,q_i)=1$ $(i=1,\ldots,n)$ such that
the side lengths of the rectangle tiles are jointly coprime then 
\begin{equation*}
    \max\{p,q\} \le \prod_{i=1}^n \tau^{\max\{p_i,q_i\}}, 
    \textrm{ where } \tau\approx 1.8637.
\end{equation*}
With our approach, we prove the following result 
which is weaker than the above result of Fomin when $\max\{p_i,q_i\}\le 6$ for most $i$, but much stronger otherwise
since $8x<\tau^x$ for $x\ge 7$.

\begin{theorem}\label{intro_scaleqrect}
Given a 
{tiling of a rectangle by n rectangles} with side length ratios 
$p_i/q_i$ for $p_i,q_i\in \mathbb{N}^{+}$ $(i=1,\ldots,n)$,
one can find a scaling of it with longest side length at most $$
{8}^{n}\prod_{i=1}^n\max\{p_i,q_i\}
$$ 
such that all tiling rectangles have integer side lengths.
\end{theorem}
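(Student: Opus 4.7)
The plan is to mirror the three-step scheme used to prove Theorem~\ref{intro_tilebysquare}: (i) establish an analog of Theorem~\ref{intro_keycorollary} adapted to rectangle tiles of prescribed aspect ratios $p_i:q_i$; (ii) apply Dirichlet's simultaneous approximation (Theorem~\ref{approximation}) to realize the hypothesis of this analog via a suitable scaling; and (iii) control the resulting scaling factor via a sharp estimate, in the spirit of Lemma~\ref{l:coordvaluesforrectangles}, for the number of distinct coordinates among the tile vertices.

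For step (i), one may first assume without loss of generality that $\gcd(p_i,q_i)=1$ for each $i$ (dividing out common factors only decreases $\max\{p_i,q_i\}$) and set $M := \prod_{i=1}^n \max\{p_i,q_i\}$. I expect the appropriate key lemma to state: if the outer rectangle has integer $x$- or $y$-coordinates and each vertex of each tile has both coordinates within $\tfrac{1}{8M}$ of an integer multiple of $\tfrac{1}{M}$, then in fact all tile vertices lie exactly on the $\tfrac{1}{M}$-grid, so that after rescaling the ambient rectangle by $M$ every tiling rectangle has integer side lengths. The grid scale $\tfrac{1}{M}$ is forced by the aspect-ratio constraints $q_ia_i=p_ib_i$ (combined with $\gcd(p_i,q_i)=1$, which makes $\tfrac{1}{M}$ the natural common refinement of all tile side lengths), while the tolerance $\tfrac{1}{8M}$ should be the largest that still lets the rounding-style argument conclude exact equality. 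The proof should follow the resistor-network-inspired template of Theorem~\ref{intro_keycorollary}, with the integers $p_i,q_i$ entering as additional linear constraints to track through its algebraic core.

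Once step (i) is in place, steps (ii) and (iii) combine routinely. Normalize so that one side of the outer rectangle equals $1$ and let $X,Y$ be the sets of remaining distinct $x$- and $y$-coordinates. Applying Theorem~\ref{approximation} to the $k := |X|+|Y|$ values $\{Mc : c\in X\cup Y\}$ with tolerance $\tfrac{1}{8}$ yields a scaling factor $1\le t \le 8^{k}$ making every $tc$ lie within $\tfrac{1}{8M}$ of a multiple of $\tfrac{1}{M}$. By the key lemma, scaling further by $M$ gives an integer-sided tiling, so the longest side of the original rectangle is scaled by at most $tM \le 8^{k}\cdot M$. A rectangle-tiling analog of Lemma~\ref{l:coordvaluesforrectangles} should give $k\le n+c$ for a small constant $c$, which—absorbing the constant factor $8^{c}$ either by a standard sharpening trick or by a tighter count of coordinates—yields the desired bound $8^n\prod_i\max\{p_i,q_i\}$.

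The main obstacle is step (i): proving the rectangle key lemma with the sharp tolerance $\tfrac{1}{8M}$. The subtlety is that the naive ``round every coordinate to the nearest multiple of $\tfrac{1}{M}$'' argument need not preserve the aspect-ratio constraints of the tiles, so the proof cannot be a pure rounding argument; one must show via a linear-algebraic computation, generalizing the resistor-network-style core of Theorem~\ref{intro_keycorollary} to accommodate the linear relations $q_ia_i=p_ib_i$, that the only exact tiling near an approximate one is the intended integer-sided one. By comparison, steps (ii) and (iii) should be standard applications of Dirichlet's theorem and of the combinatorial coordinate count.
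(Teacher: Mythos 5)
Your overall plan — key lemma, Dirichlet, coordinate count — does mirror the paper's template, but the proposed key lemma in step (i) diverges from the paper's approach and, more importantly, contains a genuine gap that is not just a matter of ``details to be filled in.''

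Your key lemma asks: if every vertex coordinate lies within $\tfrac{1}{8M}$ of a multiple of $\tfrac{1}{M}$ (equivalently, after scaling by $M$, within $\tfrac18$ of an integer), conclude they lie exactly on the grid. The only machinery available in the paper for such a conclusion is Lemma~\ref{l:insteadofnetworks} applied to $r(x)=\lfloor x+\tfrac12\rfloor$, and its hypothesis~\eqref{e:rcondition} demands that the \emph{rounded} side lengths of each tile satisfy the aspect ratio exactly: $q_i\bigl(r(d_i)-r(c_i)\bigr)=p_i\bigl(r(b_i)-r(a_i)\bigr)$. Writing $a_i=A_i+\alpha_i$ etc.\ with $|\alpha_i|<\tfrac18$ and using $q_i(d_i-c_i)=p_i(b_i-a_i)$, this reduces to requiring the integer $q_i(D_i-C_i)-p_i(B_i-A_i)=p_i(\beta_i-\alpha_i)-q_i(\delta_i-\gamma_i)$ to vanish; but the right side is only bounded by $\tfrac{p_i+q_i}{4}$, which exceeds $1$ as soon as $p_i+q_i\ge 5$. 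So the tolerance $\tfrac18$ does not force~\eqref{e:rcondition}, and the ``rounding-style'' argument breaks down exactly in the case of non-square tiles that your lemma needs to handle. This is not a minor sharpening issue: the paper itself flags it, noting after Theorem~\ref{scaleqrect} that ``an analog of Lemma~\ref{keylemma} does exist in this case, but the resulting bounds are worse,'' which is why they do not take the route you propose.

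What the paper actually does is approximate \emph{two} separate families of real numbers with two different tolerances, via a ``Varied Simultaneous Version'' of Dirichlet (Theorem~\ref{varieddirichlet}): the up-to-$n$ distinct vertex coordinates within $\tfrac14$ of integers (contributing $4^n$), and, crucially, the $n$ auxiliary quantities $c_i=a_i/p_i=b_i/q_i$ within $\tfrac{1}{2\max\{p_i,q_i\}}$ of integers (contributing $\prod_i 2\max\{p_i,q_i\}$). The second approximation is what guarantees, by a short integer-vs-small-interval argument in Lemma~\ref{rectanglelemma}, that the number of $\tfrac12$-shifted horizontal and vertical grid lines through $R_i$ is exactly $s_ip_i$ and $s_iq_i$; this verifies the aspect-ratio condition in Theorem~\ref{rectangletheorem} without needing any analog of Observation~\ref{keylemma}. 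The product of the two Dirichlet factors is $4^n\cdot 2^n\prod_i\max\{p_i,q_i\}=8^n\prod_i\max\{p_i,q_i\}$, the same numerical bound you targeted, but obtained via a genuinely different (and, unlike yours, provable) mechanism. If you want to rescue a ``finer grid'' approach, you would have to tighten your tolerance for the $i$-th tile's coordinates to roughly $\tfrac{1}{2(p_i+q_i)}$ after scaling, which (since coordinates are shared among tiles) worsens the final bound to roughly $(2\max_i(p_i+q_i))^n$ rather than $8^n\prod_i\max\{p_i,q_i\}$ — exactly the degradation the paper is warning about. I would recommend replacing step~(i) with the paper's two-family approximation: introduce the $c_i$'s and prove a lemma in the form of Lemma~\ref{rectanglelemma}.
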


\comment{
Of course, a rectangle with side length ratio $p/q$ can be trivially tiled into $pq$ squares, so an upper bound on the length of the longest side of the big rectangle also follows directly from Theorem~\ref{intro_tilebysquare}. 
But that would give an upper bound $4^{p_1q_1+\ldots+p_n q_n}$, which is in most cases much worse than the estimate of Theorem~\ref{intro_scaleqrect}.
}


As for the higher dimensional analogue of the prime tiling problem, the lack of a higher-dimensional analogue of the resistor network technique made it hard for R.~L.~Brooks et al.~\cite{BSST} and Richard Kenyon \cite{Kenyon} to generalize their results as mentioned above. On the contrary, the Diophantine approximation technique offers a simple solution: as long as we scale the tiling such that a $2$-dimensional projection of the tiling has only 
squares with integer side lengths, all hypercubes have integer side lengths. 
This way we will be able to prove the following:

\begin{theorem}\label{h:intro_hypercubesbound}
For any 
{tiling of a hypercuboid by $n$ hypercubes}, one can find a scaling of it with 
shortest side length at most $4^{\frac{2(n-1)}{d}+1}$
and longest side length at most $n\cdot 4^{\frac{2(n-1)}{d}+1}$ such that all tiling hypercubes have integer side lengths.
\end{theorem}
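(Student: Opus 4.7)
The plan is to reduce to the two-dimensional result Theorem~\ref{intro_keycorollary} via a generic 2D cross-section, controlling the scaling factor with the Simultaneous Dirichlet Approximation Theorem (Theorem~\ref{approximation}). For each $j\in\{1,\ldots,d\}$, let $c_j$ be the number of distinct values taken by the $x_j$-coordinates of the vertices of all $n$ hypercube tiles. My first step is to generalize Lemma~\ref{l:coordvaluesforrectangles} to the form $\sum_{j=1}^d c_j \le n+2d-1$; this bound is sharp on a stack of $n$ equal hypercubes (where one $c_j$ equals $n+1$ and the others equal $2$), and I expect it to follow by induction on $n$ via a slicing argument at the smallest positive $x_j$-coordinate in some suitably chosen direction $j$. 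By averaging, there exist two distinct indices $i_0,j_0$ with
\[
c_{i_0}+c_{j_0} \le \frac{2}{d}\sum_{j=1}^d c_j \le \frac{2(n-1)}{d}+4.
\]

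Next, rescale the hypercuboid so that its $i_0$-th side length equals $1$. Then the largest hypercube has side length at most $1$ (since it must fit in the $i_0$-direction), and every side length $a_j$ of the hypercuboid is at most $n$: a generic axis-parallel line in direction $j$ meets at most $n$ hypercubes, each of length at most $1$ along it. Now apply the Simultaneous Dirichlet Approximation Theorem with precision $1/4$ to the $c_{i_0}+c_{j_0}-3$ distinct $x_{i_0}$- and $x_{j_0}$-coordinates of hypercube vertices that are neither $0$ nor $1$; this yields a positive integer
\[
N \le 4^{c_{i_0}+c_{j_0}-3} \le 4^{2(n-1)/d+1}
\]
such that, after scaling by $N$, all these coordinates lie within $1/4$ of an integer, while the $i_0$-th side of the hypercuboid becomes the integer $N$.

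For each hypercube $H$ in the scaled tiling, choose a hyperplane parallel to the $x_{i_0}x_{j_0}$-coordinate plane whose coordinates in the remaining $d-2$ directions lie strictly inside $H$ and miss every hypercube boundary in those directions (a generic choice works). Intersecting the tiling with this hyperplane produces a tiling of a rectangle of size $N\times(Na_{j_0})$ by squares, namely the $x_{i_0}x_{j_0}$-cross-sections of the hypercubes the hyperplane meets, whose two $x_{i_0}$-coordinates are $0$ and $N$ (both integers), and every square-tile vertex has both $x_{i_0}$- and $x_{j_0}$-coordinates within $1/4$ of an integer. Theorem~\ref{intro_keycorollary} then forces every such square, and in particular the one coming from $H$, to have integer side length; since the side length of $H$ equals that of its cross-sectional square, $H$ itself has integer side length.

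Since every hypercube now has integer side length, every side $Na_j$ of the hypercuboid is a sum of integers and thus an integer, yielding a genuine integer-sided tiling. The shortest side is at most $N\cdot 1 = N \le 4^{2(n-1)/d+1}$ and the longest side is at most $Nn \le n\cdot 4^{2(n-1)/d+1}$, matching the theorem. I expect the main obstacle to be the coordinate-counting bound in the first step: Lemma~\ref{l:coordvaluesforrectangles} is already delicate in two dimensions, and the $d$-dimensional version requires tracking how the slicing step affects each $c_j$ individually while remaining sharp.
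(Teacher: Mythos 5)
Your proposal follows the paper's proof essentially step for step: identify two coordinate directions with few distinct vertex-coordinate values, normalize so that one of those two sides has length $1$, apply simultaneous Dirichlet approximation to the at most $\frac{2(n-1)}{d}+1$ non-trivial coordinates in those two directions, reduce to the $2$-dimensional Theorem~\ref{intro_keycorollary} via generic cross-sections, and finish with the observation that the longest side of the scaled hypercuboid is at most $n$ times the shortest. Your coordinate bound $\sum_{j=1}^d c_j\le n+2d-1$ is exactly the content of part~\ref{i} of the paper's Lemma~\ref{l:JoseStrongEstimate} (covering the interior tile boundaries by at most $n-1$ axis-perpendicular hyperplanes), your averaging step reproduces part~\ref{ii}, and your extremal example (a stack of $n$ congruent hypercubes) is indeed sharp.

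The one place you genuinely diverge, and where you explicitly leave a gap, is the proof of that coordinate bound. You guess it goes by induction on $n$ via slicing at the smallest positive coordinate; the paper instead argues directly and non-inductively. The paper's idea is the following injective map from the family $T$ of connected components of interior faces into the set of tiles: each such component $S$ (a component of the union of tile faces perpendicular to some axis, excluding the two outer faces of $C$ in that direction) has a $\preceq$-minimal point, which must be the bottom corner of some tile $C_j$, and $C_j$ cannot be the corner tile of $C$; one then checks that two distinct components cannot share a minimal point (a short argument with a small segment $s=(p-\varepsilon e_i,p)$ lying in the interior of a tile), so $|T|\le n-1$, from which $\sum_j(c_j-2)\le|T|\le n-1$. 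This avoids the bookkeeping you were worried about in an inductive slicing argument (which would need to handle the fact that a cut hyperplane can split tiles and change the $c_j$'s in opaque ways). If you substitute this argument for your deferred step, your write-up matches the paper's Theorem~\ref{h:hypercubesbound} proof.
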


This roughly translates to a lower bound in the classical higher dimensional Prime Tiling problem of $\frac{d}{4}\log_2(p)$, which is not only a result in uncharted territory, 
it is also linear in $\log(p)$, which is tight up to a factor as we will see in an example later.

Finally, we study tilings with equilateral triangles, focusing on parallelograms and equilateral triangles tiled by equilateral triangles. Three analogous problems arise in these types of tilings:
\begin{enumerate}
    \item $[$Coprime Parallelogram Tiling$]$ How many equilateral triangle tiles of integer side lengths are needed to tile a parallelogram with side lengths $p$ and $q$, where $p,q\in \mathbb N^+$ and $(p,q)=1$?
    \item $[$Parallelogram Prime Tiling$]$ How many equilateral triangle tiles of jointly coprime (integer) side lengths are needed to tile a parallelogram with side lengths $p$ and $q$, where $p,q\in \mathbb N^+$?
    \item $[$Triangle Prime Tiling$]$ How many equilateral triangle tiles of jointly coprime (integer) side lengths are needed to tile an equilateral triangle with side length $p \in \mathbb{N}^{+}$?
\end{enumerate}
By making use of the theory presented in \cite{Tutte} (specifically the equations (24) and (25) of page 471), one 
can apply the same techniques offered by \cite{Kenyon} and \cite{Conway} (with the Leaky Electrical Network defined in \cite{BSST75} and its respective matrix) obtaining a lower bound 
of $n \geq \log_2 (p)$ for Triangle Prime Tiling and 
$n \geq \log_2 (\max\{p,q\})$ 
for the parallelogram problems 
where $n$ is 
the number of tiles used in the triangulation. Our estimates, which we present in the next theorem, are slightly worse than these lower bounds, however, we offer them here as it is a natural generalization of our technique in rectangular problems.

\begin{theorem}\label{t:intro_strongt}
For any 
{tiling of an equilateral triangle by $n$ equilateral triangles}, one can find a scaling of it with side length at most $4^{\frac{2n-2}{3}}$ such that all triangle tiles have integer side lengths.
\end{theorem}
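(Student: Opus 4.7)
The plan is to follow the three-step strategy used for Theorem~\ref{intro_tilebysquare}: establish a key lemma saying that coordinates close to integers force integer side lengths, bound the number of distinct coordinate values, and apply Dirichlet's Simultaneous Approximation Theorem (Theorem~\ref{approximation}) to rescale. First, I would introduce an affine coordinate system adapted to the triangular lattice by choosing the basis $e_1=(1,0)$, $e_2=(1/2,\sqrt{3}/2)$. In the resulting $(x,y)$ coordinates, the big equilateral triangle of side $S$ has vertices $(0,0),(S,0),(0,S)$; each up-pointing tile of side $s$ has vertices $(a,b),(a+s,b),(a,b+s)$; and each down-pointing tile has vertices $(a+s,b),(a,b+s),(a+s,b+s)$. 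In particular, every tile's side length is a difference of two $x$-coordinates (equivalently, two $y$-coordinates) of its vertices, so integrality of all affine coordinates forces integrality of all tile side lengths.

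Next, I would state and prove the triangular analogue of Theorem~\ref{intro_keycorollary}: \emph{if the big equilateral triangle has integer vertex coordinates and every affine coordinate of every tile vertex lies within $\tfrac14$ of an integer, then every tile side length is an integer.} The proof should closely mirror the rectangle case, combining the triangular analogue of Strong Dehn's theorem (every tile side length is a rational multiple of $S$) with the $\tfrac14$ slack to promote ``close to an integer'' to ``exactly an integer,'' via the rounding/resistor-inspired argument indicated before Lemma~\ref{l:insteadofnetworks}, adapted to the triangular setting. I would then prove a counting lemma bounding the number of distinct nontrivial affine coordinates (i.e., normalized $x$- and $y$-values different from $0$ and $1$) appearing among tile vertices by $\tfrac{2n-2}{3}$, via an Euler-characteristic argument on the planar tiling graph---using $V-E+F=2$ with $F=n+1$ and exploiting that every tile face is a $3$-gon---to gain a factor $2/3$ compared to the rectangular case, where $4$-gon faces yielded the cruder bound $n$.

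Finally, I would apply Dirichlet's Simultaneous Approximation Theorem to these distinct coordinate values normalized by $S$ to produce a positive integer $q\le 4^{(2n-2)/3}$ such that the rescaled tiling has every affine coordinate within $\tfrac14$ of an integer and $qS$ within $\tfrac14$ of an integer; after a slight adjustment making $qS$ exactly integer, the triangular key lemma applied to the rescaled tiling forces every tile side length to be an integer, with the big scaled triangle having side length at most $4^{(2n-2)/3}$.

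The main obstacle will be the counting step: obtaining the precise constant $\tfrac{2n-2}{3}$ requires careful bookkeeping of interior versus boundary vertices and of T-intersections (where a tile side is subdivided by a smaller neighbor), rather than a one-line Euler count. The key lemma itself, by analogy with the rectangle case, should be technical but not fundamentally more difficult than its rectangular counterpart.
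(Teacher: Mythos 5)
Your overall three-step strategy (triangular key lemma, a bound on the number of distinct coordinates, then Dirichlet) matches the paper, and the oblique coordinate system is exactly the one the paper uses. The triangular key lemma (the paper's Corollary~\ref{triangcorollary}) is indeed proved by an argument parallel to the rectangle case, though the paper does not route it through a Strong Dehn statement---it is a direct variational argument on $\delta(x)=r(x)-x$ with one extra summation identity coming from the fact that along each horizontal level the vertices and the horizontal sides of tiles alternate.

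The genuine gap is in the counting step, and it is not a matter of ``careful bookkeeping.'' The paper does \emph{not} bound the number of distinct first and second $\Psi$-coordinates directly by $\tfrac{2n-2}{3}$. What it proves (Lemma~\ref{l:coordvaluesfortriangles}) is that the interior edges of the tiling lie on at most $n-1$ lines \emph{total, across all three directions} parallel to the triangle's sides. The factor $2/3$ then comes from a rotation-and-pigeonhole argument: one considers the three tilings obtained by rotating the configuration by $0^\circ$, $120^\circ$, and $240^\circ$ about the center of $T$ (these cyclically permute the three edge directions and hence the three families of lines), and by pigeonhole at least one of these rotations has at most $\tfrac{2(n-1)}{3}$ lines in the two directions that actually feed into the two $\Psi$-coordinates. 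Dirichlet is then applied to that rotated copy. Your Euler-characteristic sketch ($V-E+F=2$, $F=n+1$, ``$3$-gon faces give $2/3$'') does not produce the needed bound: Euler's formula controls $V$ or $E$, not the number of distinct lines or distinct coordinate values, and in any case the $2/3$ has nothing to do with face size---Lemma~\ref{l:coordvaluesforrectangles}'s bound $n+3$ in the rectangle case is itself not an Euler count but an injection/minimal-point argument. Without the rotation trick you would only get $n-1$ coordinates and hence a bound of the order $4^{n-1}$, not $4^{(2n-2)/3}$. A minor further point: the paper normalizes $T$ to have side length $1$ before applying Dirichlet, so after scaling by the integer $q$ the side of the big triangle is exactly $q$; there is no separate ``make $qS$ an integer'' step, and in fact the proposed ``slight adjustment'' would perturb all the other approximated coordinates.
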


As before, this theorem gives a lower bound on the Prime Tiling problem of the form $n \geq \frac{3}{4} \log_2 (p)$, where $p \in \mathbb{N}^{+}$ is the side length of the triangle.\\
\\
How sharp are the above results?
Theorem~\ref{intro_keycorollary} is sharp in the sense that the 
constant $\frac14$ cannot be improved, we cannot even allow equality:
one can decompose the square $[-1,1]\times [-1,1]$ into squares of side-lengths $3/4, 1/2$ and $1/4$ 
such that the coordinates of the vertices of the square tiles are $0, \pm 1/4, \pm 3/4$ and $\pm 1$
(see Figure \ref{fig:counterexample3}).

\comment{
\begin{figure}[ht]
\begin{minipage}{0.5\textwidth}
\centering
\begin{tikzpicture}[line cap=round,line join=round,>=triangle 45,x=2cm,y=2cm]
\clip(-1.4,-1.4) rectangle (1.2,1.2);
\draw [line width=1pt] (-0.25,1)-- (-0.25,-1);
\draw [line width=1pt] (0.25,1)-- (0.25,-1);
\draw [line width=1pt] (-1,1)-- (1,1);
\draw [line width=1pt] (1,1)-- (1,-1);
\draw [line width=1pt] (1,-1)-- (-1,-1);
\draw [line width=1pt] (-1,-1)-- (-1,1);
\draw [line width=1pt] (-1,0.25)-- (1,0.25);
\draw [line width=1pt] (-1,-0.25)-- (1,-0.25);
\draw [line width=1pt] (-0.75,0.25)-- (-0.75,-0.25);
\draw [line width=1pt] (0.75,0.25)-- (0.75,-0.25);
\draw [line width=1pt] (-0.25,0.75)-- (0.25,0.75);
\draw [line width=1pt] (-0.25,-0.75)-- (0.25,-0.75);
\draw [line width=1pt] (-1,0)-- (-0.75,0);
\draw [line width=1pt] (0,1)-- (0,0.75);
\draw [line width=1pt] (0.75,0)-- (1,0);
\draw [line width=1pt] (0,-0.75)-- (0,-1);
\draw [line width=0.5pt] (-1.03,-0.75)-- (-0.97,-0.75);
\draw [line width=0.5pt] (-1.03,0.75)-- (-0.97,0.75);
\draw [line width=0.5pt] (-1.03,-0.5)-- (-0.97,-0.5);
\draw [line width=0.5pt] (-1.03,0.5)-- (-0.97,0.5);
\draw [line width=0.5pt] (-1.03,-0.25)-- (-0.97,-0.25);
\draw [line width=0.5pt] (-1.03,0)-- (-0.97,0);
\draw [line width=0.5pt] (-1.03,0.25)-- (-0.97,0.25);
\draw [line width=0.5pt] (-1.03,-1)-- (-0.97,-1);
\draw [line width=0.5pt] (-1.03,1)-- (-0.97,1);
\draw [line width=0.5pt] (-1,-1.03)-- (-1,-0.97);
\draw [line width=0.5pt] (-0.75,-1.03)-- (-0.75,-0.97);
\draw [line width=0.5pt] (-0.5,-1.03)-- (-0.5,-0.97);
\draw [line width=0.5pt] (-0.25,-1.03)-- (-0.25,-0.97);
\draw [line width=0.5pt] (0,-1.03)-- (0,-0.97);
\draw [line width=0.5pt] (1,-1.03)-- (1,-0.97);
\draw [line width=0.5pt] (0.75,-1.03)-- (0.75,-0.97);
\draw [line width=0.5pt] (0.5,-1.03)-- (0.5,-0.97);
\draw [line width=0.5pt] (0.25,-1.03)-- (0.25,-0.97);
\draw (-1.21,1.13) node[anchor=north west] {\small $1$};
\draw (-1.38,0.88) node[anchor=north west] {\small $3/4$};
\draw (-1.38,0.38) node[anchor=north west] {\small $1/4$};
\draw (-1.22,0.13) node[anchor=north west] {\small $0$};
\draw (-1.51,-0.12) node[anchor=north west] {\small $-1/4$};
\draw (-1.51,-0.62) node[anchor=north west] {\small $-3/4$};
\draw (-1.33,-0.87) node[anchor=north west] {\small $-1$};
\draw (-1.12,-1.33) node[rotate=90, anchor=north west] {\small $-1$};
\draw (-0.87,-1.5) node[rotate=90, anchor=north west] {\small $-3/4$};
\draw (-0.37,-1.5) node[rotate=90, anchor=north west] {\small $-1/4$};
\draw (-0.12,-1.2) node[rotate=90, anchor=north west] {\small $0$};
\draw (0.13,-1.37) node[rotate=90, anchor=north west] {\small $1/4$};
\draw (0.63,-1.37) node[rotate=90, anchor=north west] {\small $3/4$};
\draw (0.88,-1.2) node[rotate=90, anchor=north west] {\small $1$};
\end{tikzpicture}
\caption{}
\label{fig:counterexample1}
\end{minipage}\hfill
\begin{minipage}{0.5\textwidth}
\centering
\begin{tikzpicture}[line cap=round,line join=round,>=triangle 45,x=4cm,y=4cm]
\clip(-0.25,-0.2) rectangle (1.1,1.1);
\draw [line width=1pt] (0,1)-- (1,1);
\draw [line width=1pt] (1,1)-- (1,0);
\draw [line width=1pt] (1,0)-- (0,0);
\draw [line width=1pt] (0,0)-- (0,1);
\draw [line width=1pt] (0,0.5)-- (0.5,0.5);
\draw [line width=1pt] (0.5,0.5)-- (0.5,1);
\draw [line width=1pt] (0.5,0.5)-- (0.5,0);
\draw [line width=1pt] (0.25,1)-- (0.25,0.5);
\draw [line width=1pt] (0,0.75)-- (0.5,0.75);
\draw [line width=1pt] (0.5,0.5)-- (1,0.5);
\draw [line width=1pt] (0.125,1)-- (0.125,0.75);
\draw [line width=1pt] (0,0.875)-- (0.25,0.875);
\draw (-0.2,0.865) node[anchor=north west] {\small $2^{-3}$};
\draw (-0.2,1.03) node[anchor=north west] {\small $2^{-3}$};
\draw (-0.2,0.7) node[anchor=north west] {\small $2^{-2}$};
\draw (-0.2,0.37) node[anchor=north west] {\small $2^{-1}$};
\end{tikzpicture}
\caption{}
\label{fig:counterexample2}
\end{minipage}\hfill
\end{figure}

Experiments by Tamas from here to include the triangle
example as well. 
The position of the captions should be corrected.}

\begin{figure}[ht]
\begin{minipage}{0.33\textwidth}
\centering
\begin{tikzpicture}[line cap=round,line join=round,>=triangle 45,x=1.5cm,y=1.5cm]
\clip(-1.4,-1.4) rectangle (1.2,1.2);
\draw [line width=1pt] (-0.25,1)-- (-0.25,-1);
\draw [line width=1pt] (0.25,1)-- (0.25,-1);
\draw [line width=1pt] (-1,1)-- (1,1);
\draw [line width=1pt] (1,1)-- (1,-1);
\draw [line width=1pt] (1,-1)-- (-1,-1);
\draw [line width=1pt] (-1,-1)-- (-1,1);
\draw [line width=1pt] (-1,0.25)-- (1,0.25);
\draw [line width=1pt] (-1,-0.25)-- (1,-0.25);
\draw [line width=1pt] (-0.75,0.25)-- (-0.75,-0.25);
\draw [line width=1pt] (0.75,0.25)-- (0.75,-0.25);
\draw [line width=1pt] (-0.25,0.75)-- (0.25,0.75);
\draw [line width=1pt] (-0.25,-0.75)-- (0.25,-0.75);
\draw [line width=1pt] (-1,0)-- (-0.75,0);
\draw [line width=1pt] (0,1)-- (0,0.75);
\draw [line width=1pt] (0.75,0)-- (1,0);
\draw [line width=1pt] (0,-0.75)-- (0,-1);
\draw [line width=0.5pt] (-1.03,-0.75)-- (-0.97,-0.75);
\draw [line width=0.5pt] (-1.03,0.75)-- (-0.97,0.75);
\draw [line width=0.5pt] (-1.03,-0.5)-- (-0.97,-0.5);
\draw [line width=0.5pt] (-1.03,0.5)-- (-0.97,0.5);
\draw [line width=0.5pt] (-1.03,-0.25)-- (-0.97,-0.25);
\draw [line width=0.5pt] (-1.03,0)-- (-0.97,0);
\draw [line width=0.5pt] (-1.03,0.25)-- (-0.97,0.25);
\draw [line width=0.5pt] (-1.03,-1)-- (-0.97,-1);
\draw [line width=0.5pt] (-1.03,1)-- (-0.97,1);
\draw [line width=0.5pt] (-1,-1.03)-- (-1,-0.97);
\draw [line width=0.5pt] (-0.75,-1.03)-- (-0.75,-0.97);
\draw [line width=0.5pt] (-0.5,-1.03)-- (-0.5,-0.97);
\draw [line width=0.5pt] (-0.25,-1.03)-- (-0.25,-0.97);
\draw [line width=0.5pt] (0,-1.03)-- (0,-0.97);
\draw [line width=0.5pt] (1,-1.03)-- (1,-0.97);
\draw [line width=0.5pt] (0.75,-1.03)-- (0.75,-0.97);
\draw [line width=0.5pt] (0.5,-1.03)-- (0.5,-0.97);
\draw [line width=0.5pt] (0.25,-1.03)-- (0.25,-0.97);
\draw (-1.21,1.13) node[anchor=north west] {\tiny $1$};
\draw (-1.4,0.88) node[anchor=north west] {\tiny $3/4$};
\draw (-1.4,0.38) node[anchor=north west] {\tiny $1/4$};
\draw (-1.25,0.13) node[anchor=north west] {\tiny $0$};
\draw (-1.54,-0.12) node[anchor=north west] {\tiny $-1/4$};
\draw (-1.54,-0.62) node[anchor=north west] {\tiny $-3/4$};
\draw (-1.35,-0.87) node[anchor=north west] {\tiny $-1$};
\draw (-1.12,-1.37) node[rotate=90, anchor=north west] {\tiny $-1$};
\draw (-0.87,-1.55) node[rotate=90, anchor=north west] {\tiny $-3/4$};
\draw (-0.37,-1.55) node[rotate=90, anchor=north west] {\tiny $-1/4$};
\draw (-0.12,-1.25) node[rotate=90, anchor=north west] {\tiny $0$};
\draw (0.13,-1.4) node[rotate=90, anchor=north west] {\tiny $1/4$};
\draw (0.63,-1.4) node[rotate=90, anchor=north west] {\tiny $3/4$};
\draw (0.88,-1.25) node[rotate=90, anchor=north west] {\tiny $1$};
\end{tikzpicture}
\caption{}
\label{fig:counterexample3}
\end{minipage}\hfill
\begin{minipage}{0.33\textwidth}
\centering
\begin{tikzpicture}[line cap=round,line join=round,>=triangle 45,x=3cm,y=3cm]
\clip(-0.25,-0.2) rectangle (1.1,1.1);
\draw [line width=1pt] (0,1)-- (1,1);
\draw [line width=1pt] (1,1)-- (1,0);
\draw [line width=1pt] (1,0)-- (0,0);
\draw [line width=1pt] (0,0)-- (0,1);
\draw [line width=1pt] (0,0.5)-- (0.5,0.5);
\draw [line width=1pt] (0.5,0.5)-- (0.5,1);
\draw [line width=1pt] (0.5,0.5)-- (0.5,0);
\draw [line width=1pt] (0.25,1)-- (0.25,0.5);
\draw [line width=1pt] (0,0.75)-- (0.5,0.75);
\draw [line width=1pt] (0.5,0.5)-- (1,0.5);
\draw [line width=1pt] (0.125,1)-- (0.125,0.75);
\draw [line width=1pt] (0,0.875)-- (0.25,0.875);
\draw (-0.2,0.89) node[anchor=north west] {\tiny $2^{-3}$};
\draw (-0.2,1.03) node[anchor=north west] {\tiny $2^{-3}$};
\draw (-0.2,0.7) node[anchor=north west] {\tiny $2^{-2}$};
\draw (-0.2,0.37) node[anchor=north west] {\tiny $2^{-1}$};
\end{tikzpicture}
\caption{}
\label{fig:counterexample4}
\end{minipage}\hfill
\begin{minipage}{0.33\textwidth}
\centering
\begin{tikzpicture}[line cap=round,line join=round,>=triangle 45,x=1.7cm,y=1.7cm]
\clip(-1.1,-2.08) rectangle (1.1,0.2);
\draw [line width=1pt] (-1,-1.732)-- (0,0);
\draw [line width=1pt] (0,0)-- (1,-1.732);
\draw [line width=1pt] (-1,-1.732)-- (1,-1.732);
\draw [line width=1pt] (-1/2,-1.732/2)-- (0,-1.732);
\draw [line width=1pt] (1/2,-1.732/2)-- (0,-1.732);
\draw [line width=1pt] (-1/2,-1.732/2)-- (1/2,-1.732/2);
\draw [line width=1pt] (-1/4,-1.732/4)-- (0,-1.732/2);
\draw [line width=1pt] (1/4,-1.732/4)-- (0,-1.732/2);
\draw [line width=1pt] (-1/4,-1.732/4)-- (1/4,-1.732/4);
\draw [line width=1pt] (-1/8,-1.732/8)-- (0,-1.732/4);
\draw [line width=1pt] (1/8,-1.732/8)-- (0,-1.732/4);
\draw [line width=1pt] (-1/8,-1.732/8)-- (1/8,-1.732/8);
\end{tikzpicture}
\caption{}
\label{fig:counterexample5}
\end{minipage}\hfill
\end{figure}

The other results are sharp only in the sense that the exponential estimates are necessary. 
For Theorem~\ref{intro_tilebysquare}, this is witnessed
by the classical Fibonacci tiling: 
by induction a rectangle of size $u_n \times u_{n+1}$
can be tiled by $n$ squares of side-lengths 
$u_1,\ldots, u_n$, where $u_n$ is the $n$-th
Fibonacci number, which shows that 
$4^n$ cannot be replaced by anything
better than $u_{n+1}\approx ((\sqrt 5 + 1)/2)^{n+1} / \sqrt 5$ in Theorem~\ref{intro_tilebysquare}.
To get another construction, which is easier to generalize, we can trivially
tile a unit square, for any positive integer $k$, into $3$ squares of side length $2^{-i}$, for $i=1,..,k-1$, and $3+1$ squares of side length $2^{-k}$ (see Figure \ref{fig:counterexample4}). This way we have $n=3k+1$ tiling squares, and the side length of the smallest one is $2^{-k}=2^{-\frac{n-1}3}$.
By replacing squares by equilateral triangles 
in the previous construction (see Figure \ref{fig:counterexample5}), we
get a triangular tiling 
which 
shows that 
in Theorem~\ref{t:intro_strongt} one cannot replace $4^\frac{2n-2}{3}$ by anything better than $2^{\frac{n-1}3}$.
Finally, 
by replacing $3$ by $2^d-1$ and squares
by hypercubes in the square tiling, of which Figure \ref{fig:counterexample4} is an example, 
we get
a hypercube tiling which 
shows that in Theorem~\ref{h:intro_hypercubesbound} one cannot replace $4^{\frac{2(n-1)}{d}+1}$
by anything better than $2^{\frac{n-1}{2^d-1}}$. In terms of the classical setting of Prime Tiling, this example tiles a $d$-dimensional hypercuboid with side length $p=2^r$ by $r2^d=2^d\log_2(p)$ tiles, linear in $\log (p)$ as promised.

It is also worth noting, and easy to prove, that any axis-parallel rectangle tiling in the Cartesian plane has axis-parallel rectangular tiles. So, from now on we will always assume that these tiles have axis-parallel sides. Something similar happens when dealing with equilateral triangle tilings, as the sides of the tiles will also be parallel to the respective sides of the tiled figure.

The paper is organized as follows.
First, to illustrate and introduce some of our tools and techniques, 
we give a new simple proof of 
Max Dehn's theorem in Section~\ref{maxdehnsection} using simultaneous Diophantine approximation. 
In Section~\ref{s:almostinteger}, we prove Theorem~\ref{intro_keycorollary}.
In Section~\ref{s:numberofcoordinates}, we prove the estimates about the number of coordinates for rectangle tilings, hypercuboid tilings and for some triangle tilings
and 
we prove Theorem~\ref{intro_tilebysquare}.
Section~\ref{s:rectangles} contains our results about tilings with rectangles, Section~\ref{higherdim} is about the higher dimensional generalizations, and 
Section~\ref{s:triangles} is about tilings with triangles.



\section{A new proof of Max Dehn's theorem using
diophantine approximation}\label{maxdehnsection}

\begin{notation}
For 
$x\in \mathbb R$, we denote the distance between $x$ and the closest integer to $x$ by $\norm{x}$. 
\end{notation}

\begin{definition}\label{verthordef}
An important tool of this paper are 
the $(\frac12,\frac12)$-translates of the integer grid lines.
This grid is composed by lines of the form $\{k+\frac12\}\times \mathbb R\ (k\in \mathbb Z)$,
which we call \emph{$\frac12$-shifted vertical grid lines}, and by lines of the form $\mathbb R \times\{k+\frac12\}$,
which we call \emph{$\frac12$-shifted horizontal grid lines}.
{Here, we follow the convention that the $x$-axis is horizontal and is directed to the right, and the $y$-axis is vertical and directed upwards.}
\end{definition}

Max Dehn's theorem and Diophantine approximation are 
connected via the following two observations.

\begin{observation}\label{obs}
Let $R=[a,b]\times [c,d]$ be a rectangle tiled by
axis parallel squares. Suppose that
\begin{enumerate}[(i)]
\item
 at least one $\frac12$-shifted vertical grid line intersects $R$,
\item 
the $\frac12$-shifted vertical and horizontal grid lines do not contain
any vertex of any square tile,
\item \label{i:equal}
there are equal numbers of  $\frac12$-shifted vertical and  $\frac12$-shifted horizontal
lines through each square.
\end{enumerate}

Then the 
{aspect} ratio of $R$ is rational.
\end{observation}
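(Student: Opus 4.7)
The plan is a double-counting argument that expresses both the width and the height of $R$ as integer multiples of the same sum over tiles.

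For each square tile $S_i$ of side length $s_i$, let $v_i$ (respectively $h_i$) be the number of $\frac12$-shifted vertical (respectively horizontal) grid lines meeting $S_i$. By condition (ii) no such line passes through a vertex of any tile, hence no such line can lie along a shared edge between two tiles; the counts $v_i, h_i$ are therefore unambiguous. By condition (iii), $v_i = h_i$ for every $i$.

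Next, I would evaluate $\sum_i v_i s_i$ by swapping the order of summation. Each $\frac12$-shifted vertical line $\ell$ that meets $R = [a,b]\times[c,d]$ intersects it in a segment of length $d-c$, and because the $S_i$ tile $R$, this segment is partitioned by the tiles into sub-segments whose lengths are precisely the side lengths $s_i$ of the squares $\ell$ crosses (this is where squareness enters: the vertical extent of $\ell\cap S_i$ equals the side $s_i$). Summing the contributions of all vertical grid lines meeting $R$ gives
\begin{equation*}
\sum_i v_i\, s_i \;=\; V\,(d-c),
\end{equation*}
where $V$ is the number of $\frac12$-shifted vertical lines intersecting $R$, and $V\ge 1$ by condition (i). The same argument applied to horizontal lines yields
\begin{equation*}
\sum_i h_i\, s_i \;=\; H\,(b-a),
\end{equation*}
where $H\in\mathbb Z_{\ge 0}$ is the number of $\frac12$-shifted horizontal lines intersecting $R$.

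Using $v_i = h_i$, the two identities combine into $V(d-c)=H(b-a)$. Since $V\ge 1$ and $d-c>0$, the right-hand side is positive, forcing $H\ge 1$. Therefore
\begin{equation*}
\frac{b-a}{d-c} \;=\; \frac{V}{H} \;\in\; \mathbb Q.
\end{equation*}

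The argument is essentially bookkeeping once the double counting is set up, and the only real subtlety is the legitimacy of the counts $v_i, h_i$ and the clean decomposition of $\ell\cap R$ into tile contributions. This is where hypothesis (ii) does the work: without it a grid line could run along a common edge of two tiles and one would have to decide how to apportion its length, whereas with (ii) every $\frac12$-shifted grid line meeting $R$ passes cleanly through the interiors of a chain of tiles, and the identity drops out immediately.
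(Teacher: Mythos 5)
Your proof is correct and is essentially the same as the paper's: the paper also computes the total length of the intersections of the $\tfrac12$-shifted vertical grid lines with the tiles in two ways, obtaining $v(d-c)$ by summing over lines and $\sum_i v_i s_i$ by summing over tiles, does the same for horizontal lines, and then uses $v_i=h_i$ to equate the two totals. Your write-up is only slightly more explicit about the role of hypothesis~(ii) in ensuring each grid line meets each tile either trivially or in a full interior chord, and about deducing $H\ge 1$, but the underlying double-count is identical.
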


\begin{proof}
Let $h$ and $v$ be the number of  $\frac12$-shifted horizontal and 
 $\frac12$-shifted vertical grid lines that intersect $R$, 
 respectively. 
By adding up the lengths of the intersections
of the  $\frac12$-shifted vertical grid lines and the squares, we get $v(d-c)$
and by adding up the lengths of the intersections
of the  $\frac12$-shifted horizontal grid lines and the squares, we get $h(b-a)$.
By \ref{i:equal} and because the two side lengths of a square are equal, 
this implies that $v(d-c)=h(b-a)$, and therefore 
$\frac{d-c}{b-a}=\frac{h}{v}\in\mathbb{Q}$. 
\end{proof}

\begin{observation}\label{keylemma}
If $\norm{a_i}<\frac{1}{4}$ for $i=1,2,3,4$ and $a_2-a_1=a_4-a_3 > 0$, then there are equal numbers of  $\frac12$-shifted vertical and  $\frac12$-shifted horizontal
lines through the square $[a_1,a_2]\times[a_3,a_4]$.
\end{observation}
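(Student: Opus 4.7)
My plan is to count each of the two quantities explicitly and then show they agree by a parity/integrality argument.

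First, for each $i\in\{1,2,3,4\}$, let $m_i\in\mathbb Z$ be the integer closest to $a_i$, so that $|a_i-m_i|=\|a_i\|<\frac14$. Then $a_i$ lies strictly inside the open interval $(m_i-\frac14,m_i+\frac14)$, which is strictly contained in $(m_i-\frac12,m_i+\frac12)$. In particular no $\frac12$-shifted grid line passes through any vertex of the square, and one can identify the extreme half-integers in the intervals $[a_1,a_2]$ and $[a_3,a_4]$: the smallest half-integer $\ge a_1$ is $m_1+\frac12$ and the largest half-integer $\le a_2$ is $m_2-\frac12$. Hence the number of $\frac12$-shifted vertical grid lines meeting the square equals the number of half-integers in $[a_1,a_2]$, which I would compute to be exactly $m_2-m_1$ (with the convention that this is $0$ when $m_2=m_1$, consistent with the range being too short to contain any half-integer). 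The same reasoning applied to the $y$-coordinates gives that the number of $\frac12$-shifted horizontal grid lines meeting the square equals $m_4-m_3$.

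The final step is to conclude $m_2-m_1=m_4-m_3$. Writing $\varepsilon_i:=m_i-a_i$, so $|\varepsilon_i|<\frac14$, I have
\[
(m_2-m_1)-(m_4-m_3)=(a_2-a_1)-(a_4-a_3)+(\varepsilon_2-\varepsilon_1)-(\varepsilon_4-\varepsilon_3).
\]
The first difference on the right vanishes by hypothesis, and the remaining sum of four $\varepsilon$'s has absolute value strictly less than $4\cdot\frac14=1$. Since the left-hand side is an integer of absolute value less than $1$, it must be $0$, giving the desired equality.

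I do not expect any genuine obstacle here: once the condition $\|a_i\|<\frac14$ is used to locate $a_i$ strictly between two consecutive half-integers, the count of half-integers in $[a_1,a_2]$ reduces cleanly to $m_2-m_1$, and the hypothesis $a_2-a_1=a_4-a_3$ together with the $\frac14$-bound on each $\|a_i\|$ is exactly tight to push the error below $1$. The sharpness of the constant $\frac14$ (alluded to in the introduction via Figure~\ref{fig:counterexample3}) is consistent with this: allowing $\|a_i\|=\frac14$ would make the total error equal to $1$ and break the integrality step.
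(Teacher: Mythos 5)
Your proof is correct and follows essentially the same approach as the paper: write each $a_i$ as its nearest integer plus an error of absolute value less than $\frac14$, observe that the grid-line counts are the differences of the nearest integers, and conclude equality because the discrepancy is an integer of absolute value strictly less than $1$. The only cosmetic differences are your sign convention for $\varepsilon_i$ and your slightly more explicit justification of the half-integer count.
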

\begin{proof}
As $\norm{a_i}<\frac{1}{4}$, we can write $a_i=A_i+\varepsilon_i$ uniquely for some $A_i\in \mathbb Z$ and $\varepsilon_i\in (-\frac14,\frac14)$. The number of $\frac12$-shifted horizontal and  $\frac12$-shifted vertical grid lines through the square are $A_4-A_3$ and $A_2-A_1$, 
respectively. 
Therefore, we only need to show $A_2-A_1=A_4-A_3$.

Since 
we have a square and $a_2-a_1=a_4-a_3$, 
\begin{align*}
    (A_2-A_1)+(\varepsilon_2-\varepsilon_1)&=(A_4-A_3)+(\varepsilon_4-\varepsilon_3)\\
    \implies (\varepsilon_2-\varepsilon_1)-(\varepsilon_4-\varepsilon_3)&=(A_4-A_3)-(A_2-A_1).
\end{align*}
However, the left-hand side is within $(-1,1)$ and the
right-hand side is an integer.
Hence, both sides of the equation are exactly $0$,
which completes the proof. 
\end{proof} 

\textit{Proof of Theorem~\ref{maxdehntheorem}.} Therefore, by the above observations, in order to 
prove that the  
aspect ratio of a rectangle with square tiling is rational all we need to do
is to re-scale the rectangle with the tiling such 
that both coordinates of every vertex of each square tile
have distance less than $\frac 14$ to integers
and the 
side-lengths of the rectangle 
are at least $1$. Such a scaling is provided by Theorem \ref{approximation}. $\hfill \Box$
\\ \\
The existence of such a scaling is clearly guaranteed by the
following well known theorem. To make the argument
self-contained and because the proof is short and elegant, 
we include it.

\begin{theorem}[Dirichlet's Approximation Theorem, Simultaneous Version]\label{approximation}
Given real numbers $\{a_1,...,a_k\}$ and a natural number $N>0$, there exists $q\in \mathbb N$, $1\leq q \leq N^k$ such that $\norm{qa_i}<\frac{1}{N}$ {for all $i$}. 
\end{theorem}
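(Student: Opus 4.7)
The plan is to apply the pigeonhole principle to a carefully chosen collection of $N^k+1$ points in the unit cube $[0,1)^k$. For each integer $q\in\{0,1,\ldots,N^k\}$, consider the point
\[
P_q=(\{qa_1\},\{qa_2\},\ldots,\{qa_k\})\in [0,1)^k,
\]
where $\{x\}=x-\lfloor x\rfloor$ denotes the fractional part. I would then partition $[0,1)^k$ into $N^k$ pairwise disjoint axis-parallel ``small'' boxes of side length $1/N$, namely the boxes of the form $\prod_{i=1}^k\bigl[\tfrac{j_i}{N},\tfrac{j_i+1}{N}\bigr)$ for $(j_1,\ldots,j_k)\in\{0,1,\ldots,N-1\}^k$.

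Since there are $N^k+1$ points but only $N^k$ boxes, the pigeonhole principle forces two distinct indices $q_1<q_2$ (both in $\{0,\ldots,N^k\}$) such that $P_{q_1}$ and $P_{q_2}$ lie in the same small box. In particular, for each $i$ we have $|\{q_1a_i\}-\{q_2a_i\}|<\frac1N$. Setting $q=q_2-q_1$, we get $1\le q\le N^k$; and since $\{q_1a_i\}-\{q_2a_i\}$ differs from $-qa_i$ by an integer, its absolute value is at least $\|qa_i\|$. Hence $\|qa_i\|<\frac1N$ for every $i$, as required.

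There is essentially no serious obstacle here: this is the classical pigeonhole argument due to Dirichlet, and the only point that demands any care is the \emph{strict} inequality $\|qa_i\|<1/N$. This is handled by using half-open boxes $[j_i/N,(j_i+1)/N)$, which ensures that two points lying in the same box have each coordinate differing by strictly less than $1/N$, together with the fact that we produced $N^k+1$ points from $N^k+1$ distinct values of $q$, so two of them must share a box without coinciding in index.
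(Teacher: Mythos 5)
Your proof is correct and is essentially the same pigeonhole argument as in the paper: partition $[0,1)^k$ into $N^k$ half-open boxes of side $1/N$, place the $N^k+1$ points $(\{qa_1\},\ldots,\{qa_k\})$ for $q=0,\ldots,N^k$, and take the difference of the two $q$-values landing in the same box. Your extra remark explaining why $|\{q_1a_i\}-\{q_2a_i\}|\ge\norm{qa_i}$ is a welcome clarification of a step the paper leaves implicit.
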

\begin{proof}
The proof is a simple application of the pigeonhole principle. 
Consider the cube $[0,1)^k\subset \mathbb R^k$,
which is the union of the $N^k$ disjoint cubes of the form $[\frac{i_1}{N},\frac{i_1+1}{N})\times[\frac{i_2}{N},\frac{i_2+1}{N}) \times \dots \times [\frac{i_k}{N},\frac{i_k+1}{N})$ for $(i_1,\dots,i_k)\in \{0,\dots,N-1\}^k$.
Take the $N^k+1$ points $(\{Ma_1\},\dots,\{Ma_k\})\in [0,1)^k$ for $0\leq M\leq N^k$ where $\{x\}$ denotes the fractional part of $x$.
By the pigeonhole principle, there are two points $(\{M_1a_1\},\dots,\{M_1a_k\})$ and $(\{M_2a_1\},\dots,\{M_2a_k\})$ with $0\leq M_1<M_2\leq N^k$ which are both elements of the same cube. 
Then taking $q=M_2-M_1$ suffices.
\end{proof}

\section{Square tilings with almost integer 
side-lengths must have integer side-lengths}
\label{s:almostinteger}

In this section, we exploit a very special property of the set of coordinates involved in a square tiling. Roughly speaking, if all coordinates of each square are close enough to integers, then that necessitates, with a few extra conditions, all coordinates to be exactly integers. 

The method used in this section, especially Lemma \ref{l:insteadofnetworks}, may seem to be pulled out of thin air. However, it is inspired by the correspondence between resistor networks and tilings of rectangles{: the side lengths of the tiles naturally produce a solution to Kirchoff's equations (see \cite{Prasolov}). 
We make an unexpected application of the uniqueness of this solution by using $\frac12$-shifted horizontal and vertical grid lines to produce another solution to obtain equalities. Now Lemma \ref{l:insteadofnetworks} is a proof of the uniqueness statement put into the tiling context, while {Lemma}~\ref{maintheorem} is its application.}
{We shall use Lemma~\ref{l:insteadofnetworks} for the function $r(x)=\lfloor x+1/2 \rfloor$, and in that case,  \ref{i:equal} of Observation~\ref{obs} will guarantee that \eqref{e:rcondition} of Lemma~\ref{l:insteadofnetworks} indeed holds.}
\begin{lemma} \label{l:insteadofnetworks}
Suppose that a rectangle $[a,b]\times[c,d]$ is tiled by
rectangles $R_i = [a_i,b_i]\times [c_i,d_i]$ $(i=1,\ldots,n)$
and we have a function $r:\mathbb{R}\to\mathbb{R}$
such that 
$r(c)=c$, $r(d)=d$ and
\begin{equation}\label{e:rcondition}
(b_i-a_i)\cdot(r(d_i)-r(c_i))=(r(b_i)-r(a_i))\cdot(d_i-c_i)
\qquad (i=1,\ldots,n).
\end{equation}

Then for every $i$ we have 
$$
r(c_i)=c_i, \quad
r(d_i)=d_i.
$$
\end{lemma}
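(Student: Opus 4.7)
The plan is to establish a single telescoping identity, then combine it with condition~\eqref{e:rcondition} to pin down all the ``slopes'' $h_i := (r(d_i)-r(c_i))/(d_i-c_i)$ to equal $1$, and finally propagate the boundary conditions $r(c)=c$ and $r(d)=d$ through the tiling.

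The key lemma, which I will prove by decomposing $[a,b]$ into vertical strips between consecutive $x$-coordinates of the tiles, is the product identity
\[
\sum_i (f(b_i)-f(a_i))(g(d_i)-g(c_i)) = (f(b)-f(a))(g(d)-g(c)),
\]
valid for any functions $f,g\colon\mathbb{R}\to\mathbb{R}$. The reason is that on each vertical strip the tiles whose $x$-range contains it form a stack whose $y$-intervals partition $[c,d]$, so the inner sum of $g$-differences telescopes to $g(d)-g(c)$. I will apply this three times. With $(f,g)=(\mathrm{id},r)$ it yields $\sum_i (b_i-a_i)(r(d_i)-r(c_i)) = (b-a)(d-c)$. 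With $(f,g)=(r,\mathrm{id})$ it yields $\sum_i (r(b_i)-r(a_i))(d_i-c_i) = (r(b)-r(a))(d-c)$. Since~\eqref{e:rcondition} makes the two left-hand sides equal, this forces $r(b)-r(a)=b-a$. Finally, with $f=g=r$ it yields $\sum_i (r(b_i)-r(a_i))(r(d_i)-r(c_i)) = (b-a)(d-c)$, and rewriting the first factor via~\eqref{e:rcondition} transforms this into $\sum_i h_i^{2}(b_i-a_i)(d_i-c_i) = (b-a)(d-c)$.

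Combining this with $\sum_i h_i(b_i-a_i)(d_i-c_i) = (b-a)(d-c)$ (restating the first application) and the obvious area identity $\sum_i (b_i-a_i)(d_i-c_i) = (b-a)(d-c)$, I obtain
\[
\sum_i (h_i-1)^{2}(b_i-a_i)(d_i-c_i) = 0,
\]
which forces $h_i=1$ for every $i$, i.e.\ $r(d_i)-r(c_i)=d_i-c_i$.

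To finish, I will induct on the ordered $y$-coordinates $c=y_0<y_1<\dots<y_m=d$ of the tiling: once $r(y_j)=y_j$ for all $j<k$, I pick any tile $R_i$ with $d_i=y_k$, note that $c_i$ belongs to $\{y_0,\dots,y_{k-1}\}$, and conclude $r(d_i)=r(c_i)+(d_i-c_i)=y_k$. The one delicate point, and the step I expect to require the most care, is verifying that every internal $y$-coordinate $y_k$ really is realised as the top edge of some tile whose bottom edge is a strictly smaller $y$-coordinate; this I will handle by tracing up the vertical stack at any column where $y_k$ occurs as a tile coordinate and noting that the process must terminate with a tile whose top is exactly $y_k$ since there are only finitely many tiles.
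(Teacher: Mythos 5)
Your proof is correct, and it takes a genuinely different organizational route from the paper's. The paper substitutes $\delta = r - \mathrm{id}$ at the outset, decomposes the union of horizontal edges of the tiles into its connected components $S_j$, and reorganizes the sum $\sum_i (\delta(b_i)-\delta(a_i))(\delta(d_i)-\delta(c_i))$ along those segments, using $\delta(c)=\delta(d)=0$ to conclude the sum vanishes and hence $\delta(d_i)=\delta(c_i)$ for all $i$. You instead isolate a single abstract \emph{product identity}
\[
\sum_i \bigl(f(b_i)-f(a_i)\bigr)\bigl(g(d_i)-g(c_i)\bigr) = \bigl(f(b)-f(a)\bigr)\bigl(g(d)-g(c)\bigr),
\]
valid for arbitrary $f,g$ and proved by the vertical-strip decomposition, and then deploy it three times (with $(\mathrm{id},r)$, $(r,\mathrm{id})$, and $(r,r)$) together with the area identity and \eqref{e:rcondition} to complete the square $\sum_i (h_i-1)^2(b_i-a_i)(d_i-c_i)=0$. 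Unwinding the definitions shows that both arguments ultimately reduce to the same nonnegative ``energy'' $\sum_i \frac{b_i-a_i}{d_i-c_i}(\delta(d_i)-\delta(c_i))^2$, but your route is more modular: the product identity is a clean, reusable lemma, the telescoping along vertical strips is more elementary to state than the decomposition into connected horizontal segments, and the completing-the-square step makes the nonnegativity transparent. Your final induction on ordered $y$-coordinates plays the same role as the paper's connectivity argument ($\delta(d_i)$ as a telescoping sum of $\delta(d_j)-\delta(c_j)$ back to $\delta(c)=0$), and the ``delicate point'' you flag — that every internal $y$-coordinate is the top edge of some tile — is genuine but handled correctly by your vertical-stack argument; the paper's phrase ``due to the connectivity in the tiling'' glosses over the same issue, so being explicit here is a plus.
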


\begin{proof}
Let $\delta(x)=r(x)-x$.
By subtracting $(b_i-a_i)\cdot (d_i-c_i)$ from 
\eqref{e:rcondition} we obtain
\begin{equation}\label{e:deltacondition}
(b_i-a_i)\cdot(\delta(d_i)-\delta(c_i))=
(\delta(b_i)-\delta(a_i))\cdot(d_i-c_i)
\qquad (i=1,\ldots,n).
\end{equation}

Take the union of the horizontal edges of the tiling rectangles and let $S_j = [u_j,v_j]\times\{h_j\}$ $(j=1,\ldots,m)$ be its connected components.
We can clearly suppose that $h_1=c$ and $h_m=d$.
Thus the assumptions $r(c)=c$ and $r(d)=d$ imply that 
\begin{equation}\label{e:h1hm}
\delta(h_1)=\delta(h_m)=0.
\end{equation}

Using \eqref{e:deltacondition} first and then 
\eqref{e:h1hm},
we obtain
\begin{align*}
&\sum_{i=1}^n \frac{b_i-a_i}{d_i-c_i}\cdot 
(\delta(d_i) - \delta(c_i))^2 =
\sum_{i=1}^n (\delta(b_i)-\delta(a_i))\cdot 
(\delta(d_i) - \delta(c_i))  \\
&= \sum_{j=2}^{m-1} \left(
  \sum_{i:d_i=h_j}(\delta(b_i)-\delta(a_i))
         \cdot \delta(h_j) -
  \sum_{i:c_i=h_j}(\delta(b_i)-\delta(a_i))
         \cdot \delta(h_j) \right).
\end{align*}
Since for any $j=2,\ldots,m-1$ 
the segment $[u_j,v_j]\times\{h_j\}$ can be obtained
both as the union of upper and lower edges of the
rectangles of the tiling,
we also have
$$
\delta(v_j)-\delta(u_j)=
\sum_{i:d_i=h_j}(\delta(b_i)-\delta(a_i))=
\sum_{i:c_i=h_j}(\delta(b_i)-\delta(a_i)).
$$
Thus the previous sum is zero, which implies that
$\delta(d_i)=\delta(c_i)$ for every $i$. 
{Notice that $\delta(c)=0$ and $\delta(d_i)=\sum_{j\in S}\left(\delta(d_j)-\delta(c_j)\right)+\delta(c)$ for some $S\subseteq \{1,\dots,n\}$ due to the connectivity in the tiling, hence $\delta(d_i)=\delta(c_i)=0$ for all $i$, which completes the proof.}
\end{proof}

By applying Lemma~\ref{l:insteadofnetworks} for
$r(x)=\left\lfloor x+\frac12 \right\rfloor$ 
(in other words, the
closest integer to $x$)
we get the following. 

\begin{lemma}\label{maintheorem}
Consider a rectangle tiled by squares. Suppose that the conditions (i)-(iii) of Observation~\ref{obs} are satisfied, and the top and bottom of the rectangle have integer $y$-coordinates. Then each square has side length equal to the number of 
 $\frac12$-shifted horizontal
lines through it. In particular, all squares have integer side lengths.
\end{lemma}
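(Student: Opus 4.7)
The proof is a direct application of Lemma~\ref{l:insteadofnetworks} with the nearest-integer function $r(x)=\lfloor x + 1/2\rfloor$, so the plan is essentially to verify the two hypotheses of that lemma and then translate the conclusion back into the language of squares and grid lines.

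First I would observe that with $r(x)=\lfloor x+1/2\rfloor$, the quantity $r(b_i)-r(a_i)$ counts the number of integers $k$ with $a_i < k+\tfrac12 < b_i$, which is exactly the number of $\tfrac12$-shifted vertical grid lines that intersect the interior of the $i$-th square (here I use condition (ii), which ensures neither $a_i$ nor $b_i$ is a half-integer, so the count is not off by one). Similarly, $r(d_i)-r(c_i)$ equals the number of $\tfrac12$-shifted horizontal grid lines through the square. Condition (iii) of Observation~\ref{obs} then says that these two counts are equal, and combining this with the square identity $b_i-a_i=d_i-c_i$ gives the hypothesis
\[
(b_i-a_i)(r(d_i)-r(c_i))=(r(b_i)-r(a_i))(d_i-c_i)
\]
required by Lemma~\ref{l:insteadofnetworks}. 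The other hypothesis $r(c)=c$, $r(d)=d$ is immediate from the assumption that $c$ and $d$ are integers.

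Lemma~\ref{l:insteadofnetworks} then yields $r(c_i)=c_i$ and $r(d_i)=d_i$ for every $i$, i.e.\ the $y$-coordinates of all square tile vertices are integers. Since the side length of the $i$-th square equals $d_i - c_i$, this is a (positive) integer. Moreover, once $c_i,d_i\in\mathbb{Z}$ the count of $\tfrac12$-shifted horizontal lines through $[c_i,d_i]$ is exactly $d_i-c_i$, giving the stated identification of side length with the number of horizontal grid lines.

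There is no real obstacle here: once one spots that $r(x)=\lfloor x+1/2\rfloor$ is the right choice in Lemma~\ref{l:insteadofnetworks}, the only content is the translation between the analytic quantity $r(b_i)-r(a_i)$ and the combinatorial count of $\tfrac12$-shifted grid lines through a tile, which is precisely what condition (ii) was engineered to make clean. The mild subtlety worth flagging is that Lemma~\ref{l:insteadofnetworks} only delivers integrality of the $y$-coordinates $c_i,d_i$, not of the $x$-coordinates $a_i,b_i$; but since the claim concerns only the common side length $d_i-c_i=b_i-a_i$, this one-sided conclusion is already enough.
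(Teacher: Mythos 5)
Your proof is correct and takes exactly the same route as the paper: both apply Lemma~\ref{l:insteadofnetworks} with $r(x)=\lfloor x+\tfrac12\rfloor$, use conditions (ii) and (iii) together with the square identity $b_i-a_i=d_i-c_i$ to verify hypothesis \eqref{e:rcondition}, and read off integrality of the $y$-coordinates. Your extra remarks (why condition (ii) makes the grid-line count agree with $r(b_i)-r(a_i)$, and that only $y$-integrality is obtained but that suffices) merely make explicit what the paper leaves implicit.
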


\begin{proof}
Let $r(x)=\left\lfloor x+\frac12 \right\rfloor$. 
We claim
that the conditions in Lemma \ref{l:insteadofnetworks} are satisfied. Indeed, as $c,d\in \mathbb Z$, $r(c)=c$ and $r(d)=d$. For each $i$, $b_i-a_i=c_i-d_i$ is the side length of the square; $r(b_i)-r(a_i)$ is the number of  $\frac12$-shifted vertical grid lines through the square, while $r(d_i)-r(c_i)$ is the number of  $\frac12$-shifted horizontal grid lines through the square. Hence $(c_i-d_i)(r(b_i)-r(a_i))=(b_i-a_i)(r(d_i)-r(c_i))$.
Applying Lemma~\ref{l:insteadofnetworks} we get
$d_i-c_i=r(d_i)-r(c_i)$ for each $i$, which completes the proof.
\end{proof}

\comment{\color{red} [I am (T) not sure that we need this paragraph.]
This may seem like both a bizarre and surprising theorem at first, and it indeed is. These seemingly weak and unrelated conditions infer such a strong 
{\color{blue} statement}
that implies, as an example, that there cannot be a square tile without vertical and horizontal grid lines through it.}

\begin{proof}[Proof of Theorem~\ref{intro_keycorollary}]
Combining Lemma~\ref{maintheorem} with Observation~\ref{keylemma}, we clearly obtain Theorem~\ref{intro_keycorollary}.
\end{proof}

\comment{\begin{corollary}\label{keycorollary}
{Suppose a rectangle tiled by squares in the Cartesian plane with sides parallel to the axes have a pair of integer-coordinated opposite sides. If each coordinate of every vertex of each square tile 
{\color{blue} has distance less than $\frac 14$ to} 
an integer, then all square tiles have integer side lengths.
}\end{corollary}}

\section{Bounding Number of Coordinate Values and the Proof of Theorem~\ref{intro_tilebysquare}}
\label{s:numberofcoordinates}

To use Theorem~\ref{intro_keycorollary} we need to re-scale the tiled rectangle, by using the Simultaneous Version of Dirichlet's Approximation Theorem (Theorem~\ref{approximation}), to make all coordinates of all tiling squares
closer than $\frac 14$ to
an integer.
How many coordinates do we have to work with?
If we have $n$ square tiles then, trivially, we cannot have more than $4n$ coordinates, and if we also notice that most coordinates are counted at least twice we get the immediate $2n+2$ estimate. 
To optimize the scaling factor as much as possible, we prove the following sharp estimate for the number of coordinates in a rectangle tiling. (To check that $n+3$ is indeed sharp is
left to the reader as an exercise.)

\begin{lemma}\label{l:coordvaluesforrectangles}
Suppose that a rectangle $R=[a,b]\times[c,d]$ is tiled by
rectangles $R_i=[a_i,b_i]\times [c_i,d_i]$ ($i=1,\ldots,n$).
Then the number of $x$-coordinates plus the number of $y$-coordinates of the vertices of the tiling rectangles $R_1,\ldots,R_n$ is at most $n+3$; that is,
\begin{equation}\label{e:ineqnplus3}
\# \{a_1,\ldots,a_n,b_1,\ldots,b_n\} +
\# \{c_1,\ldots,c_n,d_1,\ldots,d_n\} \le n+3.
\end{equation}
\end{lemma}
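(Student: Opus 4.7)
My plan is to combine Euler's formula applied to the planar graph of the tiling with a cycle-rank computation in an auxiliary bipartite graph. First let $P$ be the planar graph whose vertices are the corners of the tiles $R_i$, whose edges are the maximal straight segments of tile boundaries between consecutive such vertices, and whose faces are the $n$ tiles together with the outer face. Each vertex of $P$ is a corner of at least one tile; I classify the vertices as (i)~the $4$ corners of $R$, (ii)~$\beta$ boundary $T$-junctions that are not corners of $R$, (iii)~$\tau$ interior $T$-junctions, and (iv)~$\kappa$ interior cross-junctions, which are corners of respectively $1,2,2,4$ tiles. Double-counting tile corners gives $4+2\beta+2\tau+4\kappa=4n$, so
\[
|V(P)|=4+\beta+\tau+\kappa=2n+2-\kappa.
\]

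Next, I introduce the bipartite graph $G$ with vertex classes the set of $X$ distinct $x$-coordinates and the set of $Y$ distinct $y$-coordinates (where $X,Y$ denote the two counts in~\eqref{e:ineqnplus3}), and with an edge $\{x_j,y_k\}$ for each vertex $(x_j,y_k)$ of $P$. Then $|V(G)|=X+Y$ and $|E(G)|=|V(P)|=2n+2-\kappa$, and each tile $R_i=[a_i,b_i]\times[c_i,d_i]$ gives a $4$-cycle $C_i=a_i\,c_i\,b_i\,d_i\,a_i$ in $G$. An easy induction on the ordered $x$- and $y$-coordinates, using each tile cycle to reach its ``larger'' coordinate from its smaller one, shows $G$ is connected.

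The key step is to prove that $C_1,\ldots,C_n$ are linearly independent in the real cycle space $Z_1(G;\mathbb R)$. Orient the edges of $G$ from the $x$-side to the $y$-side and each $C_i$ as $a_i\to c_i\to b_i\to d_i\to a_i$, so that as a $1$-chain $C_i=(a_i,c_i)-(b_i,c_i)+(b_i,d_i)-(a_i,d_i)$. For each $(u,v)\in R$, define the $1$-cochain $\omega_{u,v}(\{x_j,y_k\}):=\mathbf{1}[x_j<u]\,\mathbf{1}[y_k<v]$; a short calculation yields
\[
\omega_{u,v}(C_i)=\bigl(\mathbf{1}[a_i<u]-\mathbf{1}[b_i<u]\bigr)\bigl(\mathbf{1}[c_i<v]-\mathbf{1}[d_i<v]\bigr)=\mathbf{1}\bigl[(u,v)\in(a_i,b_i]\times(c_i,d_i]\bigr].
\]
Since the half-open rectangles $(a_i,b_i]\times(c_i,d_i]$ partition $(a,b]\times(c,d]$, for each $i$ I can pick $(u,v)$ making $\omega_{u,v}(C_i)=1$ and $\omega_{u,v}(C_j)=0$ for $j\ne i$; evaluating any relation $\sum_i\alpha_iC_i=0$ against such $\omega_{u,v}$ forces all $\alpha_i=0$.

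Finally, because $G$ is connected,
\[
n\le\dim Z_1(G;\mathbb R)=|E(G)|-|V(G)|+1=(2n+2-\kappa)-(X+Y)+1,
\]
which rearranges to $X+Y\le n+3-\kappa\le n+3$, proving~\eqref{e:ineqnplus3}. The main obstacle is the linear-independence step; the half-open-rectangle cochain is what makes the proof uniform, handling non-guillotine tilings such as the pinwheel, for which neither a fault-line induction nor a ``merge two adjacent tiles'' reduction is available.
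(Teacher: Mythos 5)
Your proof is correct, and it takes a genuinely different route from the paper's. The paper deduces Lemma~\ref{l:coordvaluesforrectangles} from the $d$-dimensional Lemma~\ref{l:JoseStrongEstimate}, part (i), whose proof is a combinatorial injection: to each connected component of the union of interior tile faces it assigns the component's unique minimal point in the coordinatewise order, argues that this point must be the bottom-left vertex of some tile, and shows that distinct components cannot share a minimal point; since the tile holding the bottom-left corner of $C$ is never hit, this yields at most $n-1$ components and hence at most $n-1$ interior coordinate hyperplanes, which for $d=2$ gives $(X-2)+(Y-2)\le n-1$. Your argument instead stays in two dimensions: you count $|V(P)|=2n+2-\kappa$ by double-counting tile corners, turn each tile into a $4$-cycle in the bipartite coordinate graph $G$, and prove the $n$ tile cycles linearly independent via the half-open-rectangle cochain $\omega_{u,v}$, after which the circuit-rank formula for the connected graph $G$ gives $X+Y\le n+3-\kappa$. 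Both arguments are complete. What the paper's approach buys is that it works verbatim in all dimensions, which the authors need for Lemma~\ref{l:JoseStrongEstimate}, part (ii), and hence for Theorem~\ref{h:intro_hypercubesbound}; what yours buys is the sharper two-dimensional refinement $X+Y\le n+3-\kappa$, where $\kappa$ counts interior cross-junctions, together with the pleasant half-open cochain that handles non-guillotine tilings as uniformly as you note. One small point: the connectivity of $G$ is asserted rather than argued; the cleanest justification is that each cycle $C_i$ ties the four coordinates of tile $R_i$ into one component, two tiles sharing an edge share a coordinate, and the tile-adjacency graph of any rectangle tiling is connected.
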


Now we show how Lemma~\ref{l:coordvaluesforrectangles} completes the proof of Theorem~\ref{intro_tilebysquare}, 
which we repeat here for convenience, and then we prove Lemma~\ref{l:coordvaluesforrectangles} in a more general form.

\begin{theorem} \label{defeatbyconway}
For any 
{tiling of a rectangle by $n$ squares}, one can find a scaling of it with longest side length at most $4^{n}$ such that all squares have integer side lengths. 
\end{theorem}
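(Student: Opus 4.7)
The strategy is to scale the tiling twice: first to normalize the long side, then by an integer factor chosen via Dirichlet's theorem, so that Theorem~\ref{intro_keycorollary} applies and the longest resulting side is bounded by $4^n$. Translating so that a corner of the rectangle lies at the origin and assuming without loss of generality that the horizontal side $W$ is at least the vertical side $H$, I would first scale by $1/W$ to obtain the normalized rectangle $[0,1] \times [0, H/W]$. The remaining task is to find an integer $q \ge 1$ such that multiplication of all coordinates by $q$ places every square-tile vertex within $\tfrac14$ of an integer in both coordinates.

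Lemma~\ref{l:coordvaluesforrectangles} supplies the count: tile vertices have at most $n+3$ distinct coordinate values in total (summed across the two axes). Among these, the values $0$ and $1$ must appear as tile-vertex $x$-coordinates (the left and right edges of the big rectangle are unions of tile edges, so their endpoints are tile vertices) and $0$ must appear as a tile-vertex $y$-coordinate. All three of these map to integers automatically under any integer scaling, leaving at most $n$ distinct coordinate values — including the single nonzero $y$-coordinate $H/W$ — that still need to be simultaneously approximated. Applying Theorem~\ref{approximation} to these $k \le n$ real numbers with $N = 4$ produces an integer $q$ with $1 \le q \le 4^{k} \le 4^n$ satisfying $\|q t\| < \tfrac14$ for every such $t$.

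After this final scaling by $q$, the rectangle becomes $[0,q] \times [0, qH/W]$: both corner $x$-coordinates are integers, and every tile-vertex coordinate lies within $\tfrac14$ of an integer. Theorem~\ref{intro_keycorollary} then forces all square tiles to have integer side lengths, and since $H \le W$ the longest side of the scaled rectangle is exactly $q \le 4^n$, as claimed. The only subtle point in this plan is the coordinate bookkeeping — one must verify that precisely three of the $n+3$ distinct coordinate values are genuinely ``free,'' so that the exponent entering Dirichlet's theorem is $\le n$ rather than $\le n+2$; this is the entire reason Lemma~\ref{l:coordvaluesforrectangles} is proved in its sharp form. Everything else reduces to a direct invocation of the earlier tools.
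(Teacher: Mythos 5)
Your proof is correct and follows essentially the same path as the paper's: normalize so the longest side has length $1$ and lies along a coordinate axis, use Lemma~\ref{l:coordvaluesforrectangles} together with the observation that $0$ appears among both the $x$- and $y$-coordinates and $1$ appears once (so three of the $n+3$ counted values are ``free'') to bound the set of coordinates needing approximation by $n$, apply Dirichlet with $N=4$, and finish via Theorem~\ref{intro_keycorollary}. The only cosmetic difference is that the paper puts the longest side along the $y$-axis while you put it along the $x$-axis; the bookkeeping is identical.
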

\textit{Proof {m}odulo Lemma~\ref{l:coordvaluesforrectangles}.}
We let the initial rectangle to have bottom left corner at coordinate $(0,0)$ and top left corner $(0,1)$, and let $(0,0)$ to $(0,1)$ be the longest side of the rectangle. Now consider the set
$$S:=\{x| (x,y) \text{ or } (y,x) \text{ is a vertex of a square tile for some }y\}\setminus \{0,1\}.$$
From Lemma~\ref{l:coordvaluesforrectangles}, $\abs{S}\leq (n+3)-3=n$, where we take away $3$ coordinates because we exclude $\{0,1\}$ 
{and in the left-hand side of \eqref{e:ineqnplus3} the number $0$ appears in both terms and $1$ appears in the second term since $(0,0)$ and $(0,1)$ are vertices of the rectangle.}

Applying Theorem \ref{approximation} on the set of real numbers $S$ with $N=4$, we can find $q\in \mathbb N$ such that $1\leq q\leq 4^{\abs{S}}$ and $\norm{qx}<\frac{1}{4}$ for all $x\in S$. Then if we scale the tiling by a factor of $q$, every vertex of each square has its 
coordinates in the form of $qx$ for some $x$ in $S$, and hence are 
closer than $\frac 14$ to an integer. Also, the rectangle has bottom side coordinate $0$ and top side coordinate $q$, which are both integers. Therefore by Theorem \ref{intro_keycorollary}, all squares have integer side length.

Now, the longest sides of the rectangle have size $q\leq 4^{\ \abs{S}}\leq 4^{n}$. $\hfill \Box$

\begin{remarks}
1. Without Lemma 4.1, using the trivial $2n+2$ estimate in \eqref{e:ineqnplus3} instead of $n+3$, we would get 
this theorem with the slightly worse $4^{2n-1}$ estimate instead of $4^n$.

2. Diophantine approximation is very carefully tailored to this situation. Not only does it very naturally guarantee the coordinates to be 
closer than $\frac 14$ to
integers, 
but the fact that $q$ is an integer also helps to 
ensure both $x$-coordinates or both $y$-coordinates of the vertices of the scaled rectangle are integers.
\end{remarks}

Instead of proving Lemma~\ref{l:coordvaluesforrectangles} directly,
we prove two generalizations of it. 
The first one (Lemma~\ref{l:JoseStrongEstimate})  will be needed for the 
higher dimensional variations, while the second one 
(Lemma~\ref{l:coordvaluesfortriangles})
for the variants with triangle tilings, and either of them clearly implies Lemma~\ref{l:coordvaluesforrectangles}.

\begin{lemma}\label{l:JoseStrongEstimate}
Suppose that a $d$-dimensional axis-parallel hypercuboid $C$ is tiled by $n$ hypercuboids $\{C_1,\dots,C_n\}$. Then
\begin{enumerate}[(i)]
    \item \label{i}
    the intersection of the interior of $C$ and the union
    of the boundaries of $C_1,\ldots,C_n$ can be covered by $n-1$ hyperplanes such that each hyperplane is perpendicular to one of the coordinate {axes}, and 

    \item \label{ii}
    there exist distinct $i,j\in\{1,\dots,d\}$
    such that the number of $i$-th coordinates plus
    the number of $j$-th coordinates of the vertices of the hypercuboids $\{C_1,\dots,C_n\}$ is at most
    $$ \frac{2(n-1)}{d}+4. $$
\end{enumerate}
\end{lemma}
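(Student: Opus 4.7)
The plan is to deduce (ii) from (i) via an averaging argument, and to prove (i) by induction on $n$ via a structural dichotomy on the tiling.

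For (ii): Note first that (i) is equivalent to $\sum_{j=1}^{d}(v_j-2)\leq n-1$, where $v_j$ denotes the number of distinct $j$-th coordinates of vertices of the tiles $C_i$. Indeed, for each $j$, the axis-$j$-perpendicular hyperplanes meeting both $\mathrm{int}(C)$ and some $\partial C_i$ correspond bijectively to the $v_j-2$ interior $j$-coordinates, and each such hyperplane must appear in any covering. Rewriting gives $\sum_j v_j\leq n-1+2d$. Averaging $v_i+v_j$ over the $\binom{d}{2}$ unordered pairs $(i,j)$ (where each $v_k$ appears in $d-1$ pairs) yields mean $\frac{2\sum_j v_j}{d}\leq \frac{2(n-1)}{d}+4$, so at least one pair achieves this bound.

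For (i) I proceed by induction on $n$. The base case $n=1$ is immediate. For $n\geq 2$, the inductive step rests on the following dichotomy: the tiling admits either \emph{(a) a clean slice}---an axis-perpendicular hyperplane $H\subset\mathrm{int}(C)$ crossed by no $C_i$---or \emph{(b) a good boundary tile} $C_k$---a tile with a face $F$ on $\partial C$ whose opposite face $F'$ is flush with the tiles abutting $C_k$ across $F'$, meaning that their $(d-1)$-dimensional footprints exactly partition $C_k$'s footprint in the hyperplane $H_{F'}$ containing $F'$. In case (a), $H$ splits $C$ into sub-hypercuboids $C^{\pm}$ tiled by $n^\pm$ of the $C_i$ with $n^-+n^+=n$; by induction each sub-tiling admits a covering by $n^\pm-1$ axis-perpendicular hyperplanes, and adjoining $H$ gives a covering of the original by $n-1$ hyperplanes. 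In case (b), deleting $C_k$ and extending each tile abutting $F'$ perpendicularly to reach $F$ produces a valid tiling of $C$ with $n-1$ tiles; the new and original interior boundaries differ only within $H_{F'}$, so an inductive covering of the new tiling ($\leq n-2$ hyperplanes) together with $H_{F'}$ covers the original.

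The main obstacle is establishing the dichotomy. My plan is to pick any face of $\partial C$---say $\{x_d=A_d\}$---and let $C_k$ be a boundary tile on this face of minimum perpendicular extent $h=d_k^d-A_d$. If $C_k$ is good, (b) holds. Otherwise some tile $C_j$ with $c_j^d=d_k^d$ has footprint extending past some side of $C_k$, say across $\{x_i=b_k^i\}$ for some $i\neq d$. A case analysis exploiting the minimality of $h$ together with the axis-alignment and interior-disjointness of the tiles forces the boundary tile immediately adjacent to $C_k$ across that side also to have perpendicular extent exactly $h$. Iterating this cascade in all $(d-1)$ horizontal directions either locates a good boundary tile (giving (b)) or exhausts all boundary tiles on $\{x_d=A_d\}$ at extent exactly $h$, in which case $\{x_d=A_d+h\}$ is a clean slice (giving (a)). Should the argument fail for one face, one repeats on the remaining $2d-1$ faces of $\partial C$; the key technical content, especially in dimensions $d\geq 3$ where the cascade must be tracked in multiple perpendicular directions simultaneously, is verifying that at least one face of $\partial C$ must succeed.
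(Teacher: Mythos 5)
Your averaging deduction of (ii) from (i) is correct and matches what the paper asserts without proof. Your inductive scheme for (i) --- delete a clean slice, or delete a good boundary tile and slide its upper neighbours down --- is also sound in each branch (in particular, the lateral faces of the deleted tile persist as boundaries between the slid tiles and the former lateral neighbours, so the old interior boundary really is covered by the new one together with $H_{F'}$). The gap, which you yourself acknowledge, is the dichotomy: that some face of $\partial C$ admits either a clean slice or a good boundary tile. This is genuinely nontrivial even in the plane, because your cascade can stall: if a minimum-extent boundary tile $C_k$ and its lateral neighbour $C_m$ are both bad on account of the \emph{same} offending tile $C_j$ straddling their common lateral face at height $h$, nothing forces an offending tile past $C_m$'s far side, so the cascade need not exhaust the bottom layer nor produce a good tile. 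For $d\ge 3$ you would further have to control the cascade across $d-1$ lateral directions simultaneously and then show that some face of $\partial C$ must succeed; none of this is supplied, and it is exactly the crux of the lemma.

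The paper's proof avoids the dichotomy and induction altogether. For each axis direction it forms the connected components of the union of tile faces perpendicular to that direction, discards the two lying on $\partial C$, and assigns to each remaining component its minimal point in the coordinatewise partial order $\preceq$. It then shows that this point is necessarily the bottom-left vertex of some tile other than the tile occupying the bottom-left corner of $C$, and that distinct components yield distinct such tiles. This injection into the remaining $n-1$ tiles gives the bound directly, with no structural hypothesis on the tiling. To repair your proof you would either need to establish the dichotomy outright or replace the induction with a direct injection of this kind.
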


\begin{proof}
It is easy to see that the \ref{i}
implies \ref{ii}, 
so it is enough to prove \ref{i}.
Fix one of the $d$ axis directions, say $i\in\{1,\dots, d\}$, and consider the union of the $2n$ $(d-1)$-dimensional faces of the $C_j$'s that are perpendicular to the given direction. Let $T_i$ be the collection of connected components of this union. 
Define $T_i'$ to be $T_i$, but without the top and bottom $(d-1)$-dimensional faces of $C$ and define $T=\bigsqcup_{i=1}^d T_i',$
{where here and in the sequel "top" indicates the largest $i$-th coordinate and "bottom" indicates the smallest $i$-th coordinate.}
Clearly, it is enough to prove that the set $T$ 
contains at most $n-1$ elements.

We consider the natural partial ordering on the points of $\mathbb{R}^d$, that is: we say that $p\preceq p'$ if $p_i\le p'_i$ for each coordinate of $p$ and $p'$.
Additionally, $p$ is a minimal point of $A\subset\mathbb{R}^d$
if $p\in A$ and 
{there does not exist $p'\in A$ such that $p'\prec p$.} Clearly, any compact subset of $\mathbb{R}^d$ has at least one minimal point and
any compact cuboid has a unique minimal point,
{which we call its bottom-left vertex}.

For each $S\in T$ we choose a minimal point $p$ of $S$.
Since $T$ does not contain any top face of $C$ this
point $p$ must be also a minimal point of a tiling cuboid $C_j$.
Also, as $T$ does not contain any bottom face of $C$, this $C_j$ cannot be the cuboid tile 
{that contains the bottom-left vertex}
of $C$.
Therefore, to prove that $T$ indeed has at most $n-1$ elements, and thus to complete the proof of the lemma,
it is enough to show that distinct $S, S'\in T$ cannot have a common minimal point.

So, suppose that $p=(p_1,\ldots,p_d)$ is a common minimal point of
these distinct $S, S'$. 
Choose $i$ and $i'$ such that $S\in T'_i$ and $S'\in T'_{i'}$.
Clearly, $i\neq i'$.
Due to the fact that $S\in T_i'$, we have that
for small enough $\varepsilon$ the open
{line segment}
$s$ between $p$ and $p-\varepsilon e_i$,
where $e_i$ denotes the unit vector in the direction of the $i$-th axis, is contained in $C_j$ for some $j$, see Figure~\ref{fig:cross-section}.
As $s$ is contained in the hyperplane of $S'$,
the minimality of $p$ in $S'$ implies that $s$ must be in the interior of $C_j$.
Furthermore, this implies that if we write $C_j=[u_1,v_1]\times \ldots \times [u_d,v_d]$, then we must have $v_i=p_i$.
But then the top $(d-1)$-dimensional face of $C_j$
perpendicular to the $i$-th axis must be 
contained in $S$ and so $p$ is not minimal in
$S$, which is a contradiction.

\begin{figure}[ht]
\begin{minipage}{0.4\textwidth}
\captionsetup{width=1\textwidth}
\centering
\begin{tikzpicture}[line cap=round,line join=round,>=triangle 45,x=1.5cm,y=1.5cm]

\draw [-stealth] [line width=1pt] (-1.5,-1.55)-- (-1.5,1.5);
\draw [-stealth] [line width=1pt] (-1.55,-1.5)-- (1.5,-1.5);
\draw [line width=1pt] (-1.25,-1.25) rectangle (1.25,1.25);
\draw [line width=1pt,dashed] (-1.0,-1.0) rectangle (-0.25,0.25);
\draw [line width=0.5pt] (-0.6,-0.6) -- (-0.25,-0.6);
\draw [line width=1pt] (-0.25,-0.6) -- (0.75,-0.6);
\draw [line width=1pt] (-0.25,-0.6) -- (-0.25,1.0);
\draw (1.25,1.25) node[anchor=north west] {\tiny $C$};
\draw (-0.9,-0.7) node[anchor=north west] {\tiny $C_j$};
\draw (-0.25,1.0) node[anchor=north west] {\tiny $S$};
\draw (0.6,-0.3) node[anchor=north west] {\tiny $S'$};
\draw (-0.55,-0.4) node[anchor=north west] {\tiny $s$};
\fill (-0.25,-0.6) circle(.04);
\fill (-0.6,-0.6) circle(.04);
\draw (-0.25,-0.3) node[anchor=north west] {\tiny $p$};
\draw (-1.5,1.5) node[anchor=north east] {\tiny $i'$};
\draw (1.5,-1.55) node[anchor=north east] {\tiny $i$};

\end{tikzpicture}
\caption{Cross section through $p$}
\label{fig:cross-section}
\end{minipage}\hfill
\end{figure}
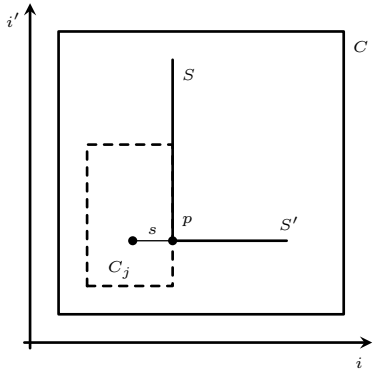
\end{proof}

\begin{lemma}
\label{l:coordvaluesfortriangles}
Suppose that $T$ is 
tiled by tiles $T_1,\ldots,T_n$,
where $T$ and each $T_i$ is a trapezoid with two horizontal sides or a triangle with a horizontal side. 
Then the intersection of the interior of $T$ and the union $H$ of the boundaries of $T_1,\ldots,T_n$ can be covered by $n-1$ lines. 
\end{lemma}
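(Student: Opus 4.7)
The plan is to adapt the directional-component-counting argument behind Lemma~\ref{l:JoseStrongEstimate}, replacing its perpendicularity step (which is unavailable when edges may point in many non-orthogonal directions) with a local vertex analysis that uses the hypothesis that every tile has a horizontal side. For every direction $\theta$ appearing among the sides of the tiles $T_i$, I will let $H_\theta$ denote the set of connected components of the union of all direction-$\theta$ sides of the $T_i$, and let $H_\theta'\subseteq H_\theta$ be those components not entirely contained in $\partial T$. Each $S\in H'=\bigsqcup_\theta H_\theta'$ lies on a single line, so $H\cap\operatorname{int}(T)$ can be covered by $|H'|$ lines, and it suffices to prove $|H'|\le n-1$. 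By convexity of $T$, each side of $T$ is, in its own direction, an isolated component of $H_\theta$, so the total number of components contained in $\partial T$ is exactly the number $k\in\{3,4\}$ of sides of $T$, and $|H'|=\sum_\theta|H_\theta|-k$.

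To bound $\sum_\theta|H_\theta|$, I apply Euler's formula to the planar graph $G$ of the tiling: with $V$ vertices, $E$ edges, and $F=n+1$ faces one has $E=V+n-1$. For each vertex $v$, let $p(v)$ denote the number of lines through $v$ that carry edges of $G$ on both sides of $v$. Direction-by-direction, the subgraph induced by direction-$\theta$ edges is a disjoint union of paths, so its number of components equals (its vertices) $-$ (its edges), and summing gives
\[
\sum_\theta|H_\theta|\;=\;E-\sum_v p(v)\;=\;V+n-1-\sum_v p(v).
\]
Hence $|H'|\le n-1$ is equivalent to $\sum_v p(v)\ge V-k$. At the $k$ corners of $T$, strict convexity forbids any line from carrying edges on both sides, so $p(v)=0$; at non-corner boundary vertices the two incident segments of the relevant side of $T$ are collinear and give $p(v)\ge 1$ for free. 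Everything therefore reduces to the local claim $p(v)\ge 1$ at every interior vertex $v$, which will be the main obstacle.

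To establish the local claim I will rule out a ``Y-junction'' ($p(v)=0$) at an interior vertex $v$ via the horizontal-side hypothesis. If $v$ lies in the relative interior of a side of some tile, that side furnishes a pass-through direction at once; so assume $v$ is a vertex of each incident tile. If no horizontal edge is incident to $v$, then every tile at $v$ has two non-horizontal edges at $v$ and is thus forced to be a triangle with $v$ as its apex; but the two edges at such an apex lie both strictly above or both strictly below the horizontal line through $v$, and propagating the apex-type across shared edges forces all incident tiles to the same vertical side of $v$, contradicting $v\in\operatorname{int}(T)$. If instead $v$ is incident to a horizontal edge on exactly one side, then the tile lying just across the horizontal line from $v$ would have $v$ as a vertex with two non-horizontal edges, one strictly above and one strictly below the horizontal through $v$; this is incompatible with each of the three allowed tile shapes, whose vertices always have either an incident horizontal edge or two non-horizontal edges on the same side of the apex-horizontal. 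These two contradictions give $p(v)\ge 1$ at every interior vertex and complete the proof.
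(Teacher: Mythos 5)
Your proof is correct but takes a genuinely different route from the paper's. The paper shows $\#L\le n-1$ by constructing an explicit injective-but-not-surjective map $f\colon L\to\{T_1,\dots,T_n\}$: a horizontal line is sent to the leftmost tile meeting it from above, a non-horizontal line to the lowest tile whose left edge lies on it, and one then notes that the bottom-left tile is never hit and rules out a collision $f(h)=T_i=f(v)$ by a short geometric argument. This is the same ``pick a distinguished tile'' device used to prove Lemma~\ref{l:JoseStrongEstimate}. You instead run a global count: Euler's formula gives $E=V+n-1$, a direction-by-direction path decomposition of the $1$-skeleton gives $\sum_\theta\lvert H_\theta\rvert=E-\sum_v p(v)$, and after discarding the $k$ components lying in $\partial T$ the target $\lvert H'\rvert\le n-1$ becomes the purely local inequality $\sum_v p(v)\ge V-k$, which you reduce to showing that every non-corner vertex has a pass-through direction. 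That local claim is where the horizontal-side hypothesis does its work, and your case analysis there is sound: at an interior vertex with no pass-through direction, either every incident tile would have to be an apex triangle opening to the same side of the horizontal, leaving a half-neighbourhood of $v$ uncovered, or some incident tile would have a vertex with one edge strictly above and one strictly below its horizontal, which neither allowed shape admits. So both proofs exploit the same geometric constraint, but the paper's injection is shorter and needs no graph-theoretic bookkeeping, while your version localises the constraint at individual vertices and makes the counting mechanical. Two minor slips, neither affecting the argument: you only need $p(v)\ge 0$ at the corners, not $p(v)=0$; and there are two allowed tile shapes in the statement, not three.
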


\begin{proof}
Let $L$ be the set of lines that contains at least one of the line segments of $H$. 
We need to show that $\# L\le n-1$. 
To this end, we define an injective, but not surjective, function $f:L\to\{T_1,\ldots,T_n\}$.

If $\ell\in L$ is horizontal, then consider those tiles
whose boundaries intersect $\ell$ and are contained in the closed half-plane above $\ell$.
It is easy to see that there exists at least one such tile and there is a clear ordering of these tiles from left to right, so, we can define $f(\ell)$ as the the leftmost one. 
If $\ell\in L$ is non-horizontal then consider the tiles whose left edge is on $\ell$ and let $f(\ell)$ be the one with the lowest left side. 
Note that the bottom-left tile is not in the range of $f$ since its domain $L$ does not contain the left and bottom sides of $T$. 
Thus $f$ is not surjective.

To prove that $f$ is injective it is enough to check that we cannot have $f(h)=T_i=f(v)$ for any horizontal $h\in L$ and nonhorizontal $v\in L$. 
Suppose, to the contrary, that this is the case.
Let $(a,b)=h\cap v$. 
This point must be the bottom-left vertex of $T_i$.
{Now observe that $v$ is not the left side of $T$ because $H$ is contained in the interior of $T$, so no line segment of $H$ is contained in the boundary of $T$. Thus,} for small enough $\varepsilon>0$ the line segment $s=(a-\varepsilon,a)\times\{b\}$ must be contained in some tile $T_j$.
The segment $s$ must be in the interior of $T_j$, 
otherwise $T_i$ would not be the leftmost tile whose boundary intersects $h$ and is contained in the closed half-plane above $h$. 
Using that $T_j$ is a trapezoid with two horizontal sides or a triangle with a horizontal side, this implies that $(a,b)$ is in the interior of the right edge of $T_j$.
Since $T_i$ is the leftmost tile whose boundary intersects $h$ and is contained in the closed half-line above $h$, there cannot be a tile between the right edge of $T_j$ and the left edge of $T_i$, so $v$ contains the the right edge of $T_j$.
But then, on the other side of $v$ there must be at least one tile lower than $T_i$, contradicting $f(v)=T_i$.
\end{proof}

\comment{
\section{Re-scaling square tiling to square tiling
with integer side lengths}
\label{s:rescaling}

\comment{{
Now the matter becomes very clear: we need coordinates appearing everywhere somewhat close to integers, which together would imply that all coordinates turn out to be exactly integers (due to side-lengths being integers). This last is an exploitation of a very strong, hidden property of the set of coordinates of a squared rectangle.
}}
Now we are ready to prove Theorem~\ref{intro_tilebysquare},
which we repeat here for convenience.
\comment{{\color{red}
[We should call this Theorem 1.2 below as well
but it needs some effort.]
}}

\comment{{This is a powerful result on its own, and we further demonstrate its strength by presenting a few corollaries.}}
In the Introduction we already showed two corollaries
of Theorem~\ref{intro_tilebysquare}. 
Here we present a few more of its direct consequences.

\begin{proposition}\label{prevprop}
A rectangle of longest side $p\in \mathbb N$ is tiled by squares of integer sizes, such that the collection of all sizes of the tiles are jointly coprime. Then we need at least $\log_4(p)$ squares.
\end{proposition}
\begin{proof}
Take an appropriate tiling in question by $n$ squares.
By Theorem~\ref{intro_tilebysquare}
we can find a scaling of this rectangle with longest side at most length $4^n$ such that all squares have integer length. However by the jointly coprime condition, that means it is a direct integer scaling of the rectangle, and hence $p\leq 4^n$. We conclude $n\geq \log_4(p)$.
\end{proof}
\begin{remark}
Theorem 4 in \cite{Conway} has the exact same statement as Proposition \ref{prevprop} with $\log_2(p)=2\log_4(p)$ in place of $\log_4(p)$, make it a stronger bound by a constant factor. Actually, combined with a slightly stronger version of Max Dehn's Theorem, Theorem 4 in \cite{Conway} is also equivalent to a statement parallel to Theorem \ref{defeatbyconway} with better constant: $2^n$ instead of $4^n$. However, \cite{Conway} relies heavily on the underlying resistor network associated to the tiling, and hence fails to generalize to some other settings. Therefore, though the results in this sections has been established before, it is more of a demonstration of the method we are about to apply to tiling problems in more general settings.
\end{remark} 

\begin{corollary}\label{prevcorollary}
Consider a $p\times q$ rectangle with $p,q\in \mathbb N$, $(p,q)=1$. Then one needs at least $\log_4(p)$ squares to tile it.
\end{corollary}
\begin{proof}
Consider a tiling of this rectangle. As $(p,q)=1$, there is a unique integer scaling of this rectangle such that all square tiles have integer side length, while the collection of all these side lengths is jointly coprime. Say this is by $mp\times mq$ for some $m\in \mathbb N$. By the previous Proposition, there are at least $\log_4(mp)\geq \log_4(p)$ squares.
\end{proof}

\begin{remark}
Similar to the above remark, in \cite{Kenyon}, a slightly stronger result with $\log_2(p)$ in place of $\log_4(p)$ in Corollary \ref{prevcorollary} was established through a different method. We shall see in Section \ref{higherdim} that the method in this paper will have a significant advantage in higher dimensions while other methods struggle to generalize.
\end{remark}

\comment{\begin{proposition}
{
If a rectangle with longest side length $1$ is tiled by $n$ squares of sizes $\frac{p_i}{q_i},i=1,\dots ,n$ with $(p_i,q_i)=1$, then $\operatorname{lcm}(q_1,\dots, q_n)\leq 4^{n}$. In particular, $q_i\leq 4^{n}$ for all $i$.
}
\end{proposition}
\begin{proof}
{
By Theorem Theorem~\ref{intro_tilebysquare}, take a scaling of the rectangle with longest side length $P\leq 4^n$ such that all sizes of all squares, hence $P$, are all integers. Then we have scaled the tiling by $P$, so the squares have sizes $\frac{Pp_i}{q_i}$. Hence $q_i|(p_iP)$ so $q_i|P$. Therefore $\operatorname{lcm}(q_1,\dots, q_n)|P\leq 4^n$.
}
\end{proof}

\begin{corollary}
{
Suppose a rectangle is tiled by $n$ squares. Then the ratio between longest side of the rectangle and smallest size of any square is at most $4^n$.
}
\end{corollary}

\begin{remark}
{
This is another completely new, unexpected result. If we are trying to tile a rectangle by squares with a given amount of tiles, it cannot have a tile too small. In hindsight, this does make a lot of intuitive sense: a tile too small on the boundaries might leave gaps that are hard to fill. However, it might have been possible to somehow squeeze a small tile into the middle of the rectangle, but this corollary denies that possibility and gives a clear qualitative bound. This result could easily imply other corollaries, such as the impossibility of using a limited number of squares to tile a rectangle with small square holes. It also opens new research opportunities to realize whether the best such bound is exponential or not and with what constants.}
\end{remark}}

\comment{
We wrap up the section by giving a partial answer to a question on MathOverflow \cite{MOPost}. Suppose for natural numbers $p,q$, $f(p,q)$ denotes the least number of square tiles of integer sidelengths needed to tile it. Clearly, $f(kp,kq)\leq f(p,q)$ for all $k\in \mathbb N$, as noted in \cite{MOPost}; but when does the reverse inequality hold? May it be possible that, even though evidence suggests $f(kp,kq)$ to be constant for small $k$, that for some very very large $k$ it suddenly strictly decreases?
\begin{corollary}
\label{c:mathoverflow}
Let $p\geq q$, $p,q\in \mathbb N$ and $(p,q)=1$. Suppose $f(kp,kq)> n$ for all $k\in \mathbb N$ such that $kp\leq 4^n$, then $f(kp,kq)> n$ for all $k\in \mathbb N$. 
\end{corollary}
\begin{proof}
Suppose $f(kp,kq)\leq n$ for some $k$. That means the rectangle $kp\times kq$ can be tiled by less than or equal to $n$ integer-sized square tiles. Then there is a scaling of it, a rectangle of dimensions $k_0p\times k_0q$ for some $k_0\in \mathbb N$, such that all squares have integer side lengths by Theorem Theorem~\ref{intro_tilebysquare} with $k_0p\leq 4^n$. The contradicts the assumption that $f(k_0p,k_0q)>n$.
\end{proof}
While the $f(kp,kq)= f(p,q)$ equality may not hold for all rectangles as some very convincing counterexample candidates arise in \cite{MOPost}, this Corollary provides a way for one to decide whether it does hold for large rectangles. In particular, if $(p,q)=1$, $f(p,q)=f(kp,kq)$ for all $k\leq 4^{f(p,q)-1}/p$, then $f(p,q)=f(kp,kq)$ for all $k\in\mathbb N$.}
}


\section{Tiling by Rectangles}
\label{s:rectangles}




In this section, we consider tiling rectangles with smaller rectangles of pre-emptively fixed sizes 
and we prove Theorem~\ref{intro_scaleqrect} mentioned in the introduction. We work under the set up that a rectangle is tiled by $n$ smaller rectangular tiles, where the $i$-th tile has ratio-of-sides $p_i/q_i$, where $p_i\in \mathbb{N}^{+}$ corresponds to the vertical side and $q_i\in \mathbb{N}^{+}$ to the horizontal side. We investigate whether there will be similar results as the case of square tiles. 
\comment{A related problem was investigated first in \cite{Fomin}, where it is given an upper bound to the maximal size of the tiled rectangle, in terms of $\max \{p_i, q_i\} /\gcd(p_i, q_i)$, called there as the bias, and the greatest common divisor of all the sizes of the tiles. As stated there, that result is a generalization of Theorem 4 of Conway's paper \cite{Conway}.
}


In the same manner as for squares, we get the following 
more general form of Lemma~\ref{maintheorem} by 
again applying Lemma~\ref{l:insteadofnetworks} for 
$r(x)=\lfloor x+\frac12 \rfloor$.

\begin{theorem}\label{rectangletheorem}
Consider a rectangle 
tiled by $n$ rectangles.
The $i$-th rectangle, $R_i$, has ratio {of its vertical side to its horizontal side equal to $p_i/q_i$ where $p_i,q_i\in \mathbb{N}^{+}$.} 
Suppose that
\begin{itemize}
    \item the {top and bottom} {vertices} of the tiled rectangle have integer $y$-coordinates;
    \item there is no {side} {vertex} of any rectangular tile having $y$-coordinate with fractional part $\frac 12$; and
    \item the ratio of the number of $\frac12$-shifted horizontal and $\frac12$-shifted vertical grid lines through the $i$th tile is $p_i/q_i$.
\end{itemize}
Then each rectangle has horizontal side length equal to the number of $\frac12$-shifted vertical grid lines through it.
\end{theorem}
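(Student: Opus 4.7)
The plan is to mimic the proof of Lemma~\ref{maintheorem}: apply Lemma~\ref{l:insteadofnetworks} with the nearest-integer map $r(x)=\lfloor x+\tfrac12\rfloor$, and then read off the conclusion from its output. The only real novelty compared with the square case is that the side ratio $p_i/q_i$ now enters both on the geometric side and on the $\tfrac12$-shifted grid-line side, and I must check that the identity \eqref{e:rcondition} still falls out of the two ratio conditions after this extra bookkeeping.

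Write the tiled rectangle as $[a,b]\times[c,d]$ and each tile as $R_i=[a_i,b_i]\times[c_i,d_i]$. The prescribed side ratio reads
\begin{equation*}
q_i(d_i-c_i)=p_i(b_i-a_i).
\end{equation*}
The second hypothesis rules out $y$-coordinates equal to half-integers, so $r(d_i)-r(c_i)$ is unambiguously the number $H_i$ of $\tfrac12$-shifted horizontal grid lines through $R_i$; analogously $V_i=r(b_i)-r(a_i)$ counts the $\tfrac12$-shifted vertical grid lines through $R_i$ (we may freely assume no vertex of any $R_i$ sits on a $\tfrac12$-shifted vertical grid line, since slightly shifting the whole tiling horizontally preserves all hypotheses and all quantities in the conclusion). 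The third hypothesis then translates into
\begin{equation*}
q_i\bigl(r(d_i)-r(c_i)\bigr)=p_i\bigl(r(b_i)-r(a_i)\bigr).
\end{equation*}
Multiplying the side-ratio identity by $r(b_i)-r(a_i)$, the grid-ratio identity by $b_i-a_i$, and comparing yields exactly \eqref{e:rcondition} after cancelling the common factor $q_i$.

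The first hypothesis gives $r(c)=c$ and $r(d)=d$, so all the assumptions of Lemma~\ref{l:insteadofnetworks} are now in force. The lemma delivers $r(c_i)=c_i$ and $r(d_i)=d_i$ for every $i$, whence $c_i,d_i\in\mathbb{Z}$ and $d_i-c_i=H_i$. Combining the three identities $d_i-c_i=\tfrac{p_i}{q_i}(b_i-a_i)$, $H_i=\tfrac{p_i}{q_i}V_i$, and $d_i-c_i=H_i$ forces $b_i-a_i=V_i$, which is exactly the conclusion of the theorem.

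I do not expect any genuine obstacle: the proof is structurally identical to that of Lemma~\ref{maintheorem}, and the only place that needs care is the verification of \eqref{e:rcondition}, where both ratios $p_i/q_i$ must be wielded simultaneously. The mild technical nuisance is the implicit genericity assumption on the $x$-coordinates of the vertices, which can be handled by an arbitrarily small horizontal perturbation of the tiling (this does not affect whether a given tile contains integer side lengths, and preserves all the ratio conditions that enter the argument).
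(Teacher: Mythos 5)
Your proof takes exactly the route the paper indicates --- apply Lemma~\ref{l:insteadofnetworks} with $r(x)=\lfloor x+\tfrac12\rfloor$ --- and the bookkeeping that turns the two ratio hypotheses into \eqref{e:rcondition}, and then extracts the conclusion from $r(c_i)=c_i$, $r(d_i)=d_i$, is correct and is the intended argument.

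The parenthetical perturbation claim, however, is wrong. A small horizontal shift of the whole tiling does \emph{not} preserve the counts $V_i$: if some vertex has $x$-coordinate $m+\tfrac12$, then shifting by $\pm\varepsilon$ moves that $\tfrac12$-shifted vertical grid line strictly into the interior of one of the two tiles meeting along that vertical edge and keeps it outside the interior of the other, so one of the $V_i$'s changes by $1$ while the corresponding $H_i$ stays fixed, and the third hypothesis (which pins the ratio $H_i/V_i$ to the exact value $p_i/q_i$) cannot survive. Concretely, an interior vertical edge at $x=m+\tfrac12$ is the right side $b_i$ of one tile and the left side $a_j$ of its neighbour, and either sign of the shift changes exactly one of $V_i$, $V_j$ by one; since $p_i,p_j\in\mathbb{N}^+$, this forces a violation of the third hypothesis. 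The correct way to deal with this is not to perturb but to read Theorem~\ref{rectangletheorem}, just as its square analogue Lemma~\ref{maintheorem} is read via condition (ii) of Observation~\ref{obs}, with the (implicit) assumption that no vertex of a tile lies on a $\tfrac12$-shifted vertical grid line; then $V_i=r(b_i)-r(a_i)$ is literally the count and the rest of your argument closes. This extra assumption is harmless: in the theorem's only application, the proof of Lemma~\ref{rectanglelemma}, every coordinate lies within $\tfrac14$ of an integer and so is never a half-integer.
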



Using a slightly different Diophantine approximation from the square case, we arrive at the following analogue of Theorem {\ref{intro_keycorollary}}.

\begin{lemma}\label{rectanglelemma}
Given a 
{tiling of a rectangle by $n$ rectangles}
with the ratios of the vertical side to the horizontal side equal to $\frac{p_i}{q_i}$ with $p_i,q_i\in \mathbb{N}^{+}$. Suppose the $i$th rectangle $R_i$ has actual size $(a_i:b_i)$, and 
\begin{itemize}
    \item the 
    vertices of the 
    {tiled} rectangle have integer $y$-coordinates;
    \item for each $i$, the ratio $a_i/p_i$ is 
    {at most distance
    $\frac{1}{{2}\max\{p_i,q_i\}}$ away from an integer;}
    \item all coordinates of each vertex of the tiles are 
    closer than $\frac 14$ to
    an integer.
\end{itemize}
Then {the} {side lengths of the $i$th rectangular tile are integer multiples of $(p_i,q_i)$. In particular, }the side lengths of all the rectangular tiles are integers.
\end{lemma}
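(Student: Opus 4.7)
The plan is to derive the lemma from Theorem~\ref{rectangletheorem} by verifying each of its three hypotheses. The first hypothesis, that the top and bottom of the outer rectangle have integer $y$-coordinates, is exactly the first bullet of the lemma. The second hypothesis, that no side vertex of a tile has $y$-coordinate with fractional part $\tfrac12$, is immediate from the third bullet, since every coordinate is strictly within $\tfrac14$ of an integer and so cannot have fractional part equal to $\tfrac12$. The substantive task is to verify the third hypothesis: for each tile $R_i$ the ratio of the number of $\tfrac12$-shifted horizontal grid lines through $R_i$ to the number of $\tfrac12$-shifted vertical grid lines through $R_i$ equals $p_i/q_i$.

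To check this, I will fix a tile $R_i=[x_1,x_2]\times[y_1,y_2]$ and, exactly as in the proof of Observation~\ref{keylemma}, write each corner coordinate as its nearest integer plus an error of modulus strictly less than $\tfrac14$, so that $x_j=X_j+\eta_j$ and $y_j=Y_j+\xi_j$ with $X_j,Y_j\in\mathbb{Z}$ and $|\eta_j|,|\xi_j|<\tfrac14$. Then $X_2-X_1$ and $Y_2-Y_1$ are precisely the numbers of $\tfrac12$-shifted vertical and horizontal grid lines through $R_i$. Let $k_i$ denote the integer closest to $a_i/p_i$; the second bullet then gives $|a_i-k_ip_i|\le \tfrac{p_i}{2\max\{p_i,q_i\}}$, and multiplying by $q_i/p_i$ (using $a_i/b_i=p_i/q_i$) yields the companion bound $|b_i-k_iq_i|\le \tfrac{q_i}{2\max\{p_i,q_i\}}$. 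Both of these are at most $\tfrac12$.

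Combining these bounds with $|\xi_2-\xi_1|<\tfrac12$ and $|\eta_2-\eta_1|<\tfrac12$ via the triangle inequality, I will obtain $|(Y_2-Y_1)-k_ip_i|<1$ and $|(X_2-X_1)-k_iq_i|<1$, and since the left-hand sides are integers they must both vanish. This gives $Y_2-Y_1=k_ip_i$ and $X_2-X_1=k_iq_i$, verifying the ratio condition. Theorem~\ref{rectangletheorem} then forces $b_i=X_2-X_1=k_iq_i$, and the prescribed side-length ratio gives $a_i=k_ip_i$, so both side lengths of $R_i$ are integer multiples of $\gcd(p_i,q_i)$ and in particular integers.

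The point where I expect to need care is the strictness of these inequalities: when $p_i\ge q_i$ the estimate $|a_i-k_ip_i|\le\tfrac12$ is attained with equality, so the step that kills the integer $(Y_2-Y_1)-k_ip_i$ hinges on the hypothesis being the \emph{strict} bound $|\xi_j|<\tfrac14$ rather than $\le \tfrac14$. This mirrors the sharpness discussion surrounding Theorem~\ref{intro_keycorollary}, and once it is handled the rest is routine bookkeeping.
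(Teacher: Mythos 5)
Your proof is correct and follows essentially the same route as the paper: you rewrite each coordinate as nearest integer plus small error, use the hypothesis to bound $|a_i-k_ip_i|$ and $|b_i-k_iq_i|$ by $\tfrac12$, combine with the strict $\tfrac14$ bounds on the coordinate errors to force the integer differences $Y_2-Y_1=k_ip_i$ and $X_2-X_1=k_iq_i$, and then invoke Theorem~\ref{rectangletheorem}. Your observation about strictness is well-taken and in fact slightly sharper than the paper's own write-up, which silently restates the "at most $\tfrac{1}{2\max\{p_i,q_i\}}$" hypothesis as a strict inequality; as you note, the argument survives the closed bound because the coordinate errors are \emph{strictly} within $\tfrac14$.
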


\begin{proof}
Define
\begin{equation*}
    c_i = \frac{a_i}{p_i} = \frac{b_i}{q_i}.
\end{equation*}
Now consider the $i$th rectangle tile $R_i$, in particular 
the $y$-coordinates of its vertices. Write these two coordinates to be $r_1+\varepsilon_1<r_2+\varepsilon_2$, where $r_j\in \mathbb Z$ and $\varepsilon_j\in (-\frac {1}{{4}},+\frac {1}{{4}})$. Moreover let $c_i=s_i+\delta_i$ where $s_i\in\mathbb Z$ and $\delta_i\in (-\frac{1}{{2}\max\{p_i,q_i\}},+\frac{1}{{2}\max\{p_i,q_i\}})$ by the 
hypotheses of the lemma. Now we have
\begin{align*}
    (r_2+\varepsilon_2)-(r_1+\varepsilon_1)=a_i&=c_ip_i=(s_i+\delta_i)p_i\\
    \implies r_2-r_1-s_ip_i&=\delta_ip_i-\varepsilon_2+\varepsilon_1
\end{align*}
Now $LHS \in \mathbb Z$ and $RHS \in (-1,1)$. Hence both sides are exactly $0$. In particular $r_2-r_1=s_ip_i$. In other words, the number of $\frac12$-shifted horizontal grid lines through the tile $R_i$ is $s_ip_i$. One could similarly show that the number of $\frac12$-shifted vertical grid lines through $R_i$ is $s_iq_i$.

We are now ready to apply Theorem \ref{rectangletheorem} to this tiling because all 
three conditions are satisfied.
Therefore each rectangle tile $R_i$ has horizontal side length equal to the number of $\frac12$-shifted vertical grid lines through it, $s_iq_i$. 
Then clearly the vertical side length is $s_ip_i$.
\end{proof}

{One can notice that Lemma \ref{rectanglelemma} is exactly Theorem \ref{intro_keycorollary} in the case where tiles have 
dimension $(1:1)$, that is, they
{are squares.}

\begin{theorem}[Dirichlet's Approximation Theorem, Varied Simultaneous Version]\label{varieddirichlet} Given sets of real numbers $S_0,\dots,S_r$ and natural numbers $m_0,\dots,m_r$, there exists $q\in \mathbb{N}^{+}$, with $1\leq q\leq m_0^{\abs{S_1}}\dots m_r^{\abs{S_r}}$ such that $\norm{qa_i}<\frac{1}{m_i}$ for any $a_i\in S_i$ and for all $i$.
\end{theorem}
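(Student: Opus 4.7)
My plan is to adapt the pigeonhole argument used to prove Theorem~\ref{approximation}, modifying it so that different coordinate blocks receive different precision. Interpreting the bound as $N := \prod_{i=0}^{r} m_i^{|S_i|}$ (the displayed exponent on $m_0$ appears to be a typo for $|S_0|$), the goal is simply to extract $q \in \{1,\ldots,N\}$ from a pigeonhole configuration in a cube of dimension $|S_0|+\cdots+|S_r|$.

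First, I would subdivide the cube $[0,1)^{|S_0|+\cdots+|S_r|}$, indexed by the elements of the disjoint union $S_0\sqcup\cdots\sqcup S_r$, as follows: on each coordinate coming from an element of $S_i$, chop $[0,1)$ into the $m_i$ half-open intervals $[j/m_i,(j+1)/m_i)$ for $j=0,\ldots,m_i-1$. Taking the product partition yields exactly $N=\prod_{i=0}^{r} m_i^{|S_i|}$ pairwise disjoint boxes, where the mesh on the $S_i$-block is $1/m_i$. This block-wise choice of mesh is the only new ingredient compared with Theorem~\ref{approximation}; the total count of boxes matches the required bound by design.

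Next, for each integer $M \in \{0,1,\ldots,N\}$, I would form the point
\[
P_M = \bigl(\{Ma\}\bigr)_{a \in S_0\sqcup\cdots\sqcup S_r},
\]
where $\{x\}$ denotes the fractional part of $x$. This gives $N+1$ points in a union of $N$ boxes, so the pigeonhole principle produces $0 \le M_1 < M_2 \le N$ with $P_{M_1}$ and $P_{M_2}$ in a common box. Setting $q := M_2-M_1$, I would argue that for each $a \in S_i$ the values $\{M_1 a\}$ and $\{M_2 a\}$ lie in a common interval of length $1/m_i$, which forces $\norm{qa}<1/m_i$; the bound $1\le q\le N$ is automatic. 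The use of half-open subintervals is what yields the strict inequality.

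There is no substantial obstacle in this proof: the argument is a direct transplant of the pigeonhole used for Theorem~\ref{approximation}, and the only subtlety is matching the partition resolution $1/m_i$ to the set $S_i$ so that the pigeonhole count is exactly $N$. Once the partition is correctly described, the conclusion follows in one line.
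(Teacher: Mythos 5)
Your proof is correct and is precisely the block-wise pigeonhole argument the paper intends: the paper's own ``proof'' of Theorem~\ref{varieddirichlet} is just the one-line remark that it can be proved analogously to Theorem~\ref{approximation}, and your adaptation (meshing the $S_i$-block at resolution $1/m_i$ so the box count is exactly $\prod_i m_i^{|S_i|}$) is the intended analogy. You are also right that the exponent $m_0^{|S_1|}$ in the statement is a typo for $m_0^{|S_0|}$, as confirmed by how the theorem is applied later with $1\le q\le 4^{|S_0|}\prod_{i=1}^n(2\max\{p_i,q_i\})^{|S_i|}$.
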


\begin{proof}
This can be proved analogously to Theorem \ref{approximation}.
\end{proof}

We 
now have the conditions to prove Theorem~\ref{intro_scaleqrect},
which we repeat here for the readers' comfort.

\begin{theorem}\label{scaleqrect}
Given a 
{tiling of a rectangle by $n$ rectangles} with ratios of side lengths $p_i/q_i$, $p_i,q_i\in \mathbb{N}^{+}$ respectively, one can find a scaling of it with longest side length at most $8^{n}\prod_{i=1}^n\max\{p_i,q_i\}$ such that all rectangles have integer side lengths.
\end{theorem}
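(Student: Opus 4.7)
The plan is to adapt the proof of Theorem~\ref{defeatbyconway}, replacing the key square lemma with its rectangular analogue (Lemma~\ref{rectanglelemma}) and the uniform Dirichlet approximation with its varied-precision version (Theorem~\ref{varieddirichlet}).

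First, I would rotate the tiling if necessary so that the longest side is vertical (this swaps $p_i$ and $q_i$ for each tile but leaves $\max\{p_i,q_i\}$ unchanged, so the target bound is unaffected), place the bottom-left corner at $(0,0)$, and rescale so that the longest side has length $1$; the outer rectangle becomes $[0,L]\times[0,1]$ with $L\in(0,1]$. For each tile $R_i$ with vertical side $a_i$ and horizontal side $b_i$, set $c_i=a_i/p_i=b_i/q_i$. Let $S$ be the set of all $x$- and $y$-coordinates of vertices of the tiles, with $\{0,1\}$ removed. Exactly as in the proof of Theorem~\ref{defeatbyconway}, Lemma~\ref{l:coordvaluesforrectangles} (using that $0$ appears as both an $x$- and a $y$-coordinate of a tile vertex, and $1$ appears as a $y$-coordinate of the outer rectangle) gives $\abs{S}\le n$.

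Next, I would apply Theorem~\ref{varieddirichlet} with $S_0=S$, $m_0=4$ and $S_i=\{c_i\}$, $m_i=2\max\{p_i,q_i\}$ for $i=1,\dots,n$. This produces a positive integer $q$ with
\[
q\le 4^{\abs{S}}\prod_{i=1}^n 2\max\{p_i,q_i\}\le 4^n\cdot 2^n\prod_{i=1}^n\max\{p_i,q_i\}=8^n\prod_{i=1}^n\max\{p_i,q_i\},
\]
such that $\norm{qx}<1/4$ for every $x\in S$ and $\norm{qc_i}<1/(2\max\{p_i,q_i\})$ for every $i$.

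Finally, I would scale the tiling by this factor $q$ and check that all three hypotheses of Lemma~\ref{rectanglelemma} hold for the scaled tiling: the scaled outer rectangle has $y$-coordinates $0$ and $q\in\mathbb{Z}$; the quantity $qc_i=qa_i/p_i$ is within $1/(2\max\{p_i,q_i\})$ of an integer for each $i$; and every vertex coordinate of a scaled tile has the form $qx$ for some $x\in S\cup\{0,1\}$, hence lies within $1/4$ of an integer. Lemma~\ref{rectanglelemma} then forces every tile side length to be an integer, while the longest side of the scaled rectangle is $q\le 8^n\prod_{i=1}^n\max\{p_i,q_i\}$, as required. The argument is essentially routine bookkeeping once the machinery of Sections~\ref{s:almostinteger} and~\ref{s:numberofcoordinates} is in place; the one subtlety, which is exactly what produces the extra factor $\prod 2\max\{p_i,q_i\}$ beyond the pure square bound $4^n$, is the need for two different Diophantine precisions — the generic $1/4$ on vertex coordinates and the sharper $1/(2\max\{p_i,q_i\})$ on the aspect-ratio witnesses $c_i$ — and verifying that the varied Dirichlet theorem yields both simultaneously with the claimed bound on $q$.
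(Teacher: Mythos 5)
Your proposal is correct and follows essentially the same route as the paper's own proof: place the rectangle with its longest side vertical and of length one, form the coordinate set $S$ and the witnesses $c_i=a_i/p_i=b_i/q_i$, invoke Theorem~\ref{varieddirichlet} with precisions $m_0=4$ and $m_i=2\max\{p_i,q_i\}$, and feed the scaled tiling into Lemma~\ref{rectanglelemma}, with $\abs{S}\le n$ coming from Lemma~\ref{l:coordvaluesforrectangles}. The only addition is your explicit remark that an initial rotation (which swaps $p_i$ and $q_i$) leaves $\max\{p_i,q_i\}$ and hence the bound unchanged; the paper silently assumes the vertical side is longest.
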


\begin{proof}
Again, we place the rectangle such that the bottom left corner 
has coordinate $(0,0)$, top left corner $(0,1)$ and let $(0,0)$ to $(0,1)$ be the longest side of the rectangle. For the $i$th rectangle tile $R_i$, denote the vertical side-length by $a_i$ and the horizontal one as $b_i$. 
Define
\begin{equation*}
    c_i = \frac{a_i}{p_i} = \frac{b_i}{q_i}.
\end{equation*}
Let 
\begin{align*}
    S_0&:=\{x\mid  (x,y) \text{ or } (y,x) \text{ is a vertex of a rectangle tile for some }y\}\setminus \{0,1\},\\
    S_i&:=\{c_i\} \qquad (i=1,\dots,n).
\end{align*}
Now we apply Theorem \ref{varieddirichlet} to these sets, $m_0 = 4$ and $m_i = 2\max\{p_i,q_i\}$, to find $q\in \mathbb N$, $1\leq q \leq {4}^{\abs{S_0}}\prod_{i=1}^n\left(({2}\max\{p_i,q_i\})^{\abs{S_i}}\right)$ such that 

\begin{equation*}
  \norm{qc_i} < \frac{1}{{2} \max\{p_i, q_i\}}, \text{ and } \norm{qx}< \frac{1}{{4}}\text{ for all }x\in S_0. 
\end{equation*}

Scale the entire tiling by $q$. Now apply Lemma \ref{rectanglelemma} to the scaled tiling to obtain that all sides of all rectangles are of integer length.

The longest side of the rectangle is $q$, and we know $q\leq {4}^{\abs{S_0}}\prod_{i=1}^n({2}\max \{p_i,q_i\})$. By Lemma~\ref{l:coordvaluesforrectangles}, $\abs{S_0}\leq (n+3)-3=n$. Therefore, $q\leq {8}^{n}\prod_{i=1}^n\max\{p_i,q_i\}$.
\end{proof}

\begin{remark}
An analog of Lemma \ref{keylemma} does exist in this case, but the resulting bounds are worse. Therefore, we have introduced $c_i$'s to guarantee the number of $\frac12$-shifted vertical and $\frac12$-shifted horizontal grid lines are in desired proportions after scaling without the help of an analogous lemma. 
\end{remark}

\comment{
The equivalent result obtained in \cite{Fomin} gives the upper bound to the longest side of $d \cdot \tau^{\sum^n_{i=1} \frac{\max(p_i,q_i)}{\gcd(p_i, q_i)}}$, where $\tau \approx 1.8637$ is the constant presented in Theorem 2 of \cite{Fomin}, and $d$ is the greatest common divisor of all the rectangular tiles. 
}


\section{Tiling of Hypercuboids}\label{higherdim}
\comment{{
In this section, we study the case for tiling a hypercuboid with hypercubes/hypercuboids. For a long time, this higher dimensional analogue of the same easily answered planar problem has been known to be difficult to attack. As we mentioned in the introduction, both the end of \cite{BSST} and \cite{Kenyon} explained that the main reason for this is the lack of a generalization of the resistor network technique, which seems to have caused a dead-end to this trail of thought. Our following result treats this analogue problem and improves the bound stated in \cite{Walters}, via Diophantine Approximation, which we talked about in the presentation of Theorem \ref{h:intro_hypercubesbound}.
}}
We start this subsection by proving Theorem~\ref{h:intro_hypercubesbound}. As before, we write it again for convenience.

\begin{theorem}\label{h:hypercubesbound}
For any 
{tiling of a hypercuboid with $n$ hypercubes}, one can find a scaling of it with 
shortest side length at most $4^{\frac{2(n-1)}{d}+1}$
and longest side length at most $n\cdot 4^{\frac{2(n-1)}{d}+1}$ such that all tiling hypercubes have integer side lengths.
\end{theorem}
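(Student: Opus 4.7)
The plan is to reduce to the planar case via axis-parallel cross-sections: since any axis-parallel $2$-dimensional cross-section of a hypercube is itself a square, any such cross-section of our hypercube tiling is a rectangle tiled by squares, to which Theorem~\ref{intro_keycorollary} applies.

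First I would apply Lemma~\ref{l:JoseStrongEstimate} to select two distinct axes $i,j$ for which the total number of distinct $i$-th and $j$-th coordinates of tile vertices, $|I|+|J|$, is at most $\frac{2(n-1)}{d}+4$. Swapping $i$ and $j$ if necessary so that $L_i \le L_j$, I rescale the hypercuboid so that $L_i=1$. This forces every tile to have side length at most $L_i=1$, and a chain argument (any axis-parallel line meets at most $n$ tiles, each of which is convex and contributes at most $1$ to the length) shows that the longest side of the hypercuboid is at most $n$. Since both $0$ and $1$ now lie in $I$ and $0\in I\cap J$, the set $S=(I\cup J)\setminus\{0,1\}$ satisfies $|S|\le |I|+|J|-3\le \frac{2(n-1)}{d}+1$.

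Next I would apply the Simultaneous Dirichlet Approximation Theorem (Theorem~\ref{approximation}) to $S$ with $N=4$, obtaining an integer $q$ with $1\le q\le 4^{|S|}\le 4^{\frac{2(n-1)}{d}+1}$ and $\|qa\|<\frac{1}{4}$ for every $a\in S$. After scaling the entire tiling by $q$, the shortest side of the hypercuboid is at most $q\le 4^{\frac{2(n-1)}{d}+1}$ and the longest side is at most $qn\le n\cdot 4^{\frac{2(n-1)}{d}+1}$, matching the bounds claimed.

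To finish, for each tile $C_k$ I would pick the $d-2$ remaining coordinates in the interiors of the corresponding intervals of $C_k$ and consider the resulting $(i,j)$-cross-section. This is a rectangle whose $i$-side has integer endpoints $0$ and $q$, tiled by the $(i,j)$-faces of the hypercubes meeting the cross-section, all of which are squares. Every vertex coordinate of this square tiling lies in $\{0,q\}\cup\{qL_j\}\cup qS$, hence is within $\frac{1}{4}$ of an integer (using that either $L_j=1$ or $L_j\in S$). Theorem~\ref{intro_keycorollary} then forces every square in the cross-section, and in particular the cross-section of $C_k$ itself, to have integer side length. The main subtlety will be the careful counting that yields $|S|\le \frac{2(n-1)}{d}+1$ from Lemma~\ref{l:JoseStrongEstimate}, together with verifying that the boundary coordinates of every cross-section meet the hypothesis of Theorem~\ref{intro_keycorollary}; once this bookkeeping is done, the rest is a direct invocation of tools already proved in the paper.
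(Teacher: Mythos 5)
Your proposal is correct and follows essentially the same route as the paper: pick the two coordinate directions via Lemma~\ref{l:JoseStrongEstimate}, normalize so that the $i$-th side has length $1$ with the bottom-left corner at the origin, count that $|S|\le\frac{2(n-1)}{d}+1$ using $0\in I\cap J$ and $1\in I$, apply Dirichlet with $N=4$, and then pass integrality from $2$-dimensional cross-sections to all hypercubes via Theorem~\ref{intro_keycorollary}, finishing with the ``longest $\le n\cdot$shortest'' chain argument. The only cosmetic differences are that your swap to ensure $L_i\le L_j$ is unnecessary (the argument only uses that the shortest side is $\le L_i$), and that you should require the cross-sectional plane to avoid \emph{all} tile vertices (not just to lie in the interior of the chosen $C_k$'s remaining intervals) so that the intersected tiles genuinely form a square tiling of the slab rectangle; this is a measure-zero restriction and the paper states it explicitly.
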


\begin{proof}
Let $C=\{C_1,\dots, C_n\}$ be the set of tiles. 
By Lemma \ref{l:JoseStrongEstimate} we know that there exist 
two distinct coordinates, say the $i$-th and $j$-th coordinates, such that the number of $i$-th coordinates plus the number of $j$-th coordinates of the vertices of the tiles is at most $\frac{2(n-1)}{d}+4$. Without loss of generality we can assume that $i=1$ and $j=2$.

Now, we translate and scale the hypercuboid in question, $H$, so that it has
vertices at $(0,\dots, 0)$ and $(1, 0, \dots, 0)$.

Let $S$ be the set of the first and second coordinates of all the vertices. Noticing 
two of the four 
first and second coordinates of the vertices of $H$ are both $0$, we can conclude $S \setminus \{0,1\}$ has at most $\frac{2(n-1)}{d}+1$ elements. Therefore, applying Theorem \ref{approximation} on $S\setminus \{0,1\}$ with $N=4$, we find $1\leq q \leq 4^{\frac{2(n-1)}{d}+1}$, $q \in \mathbb{N}^{+}$ such that $\norm{q x} <\frac 14$ for all $x\in S$.

First, scale $H$ by $q$. Then let $P$ be any arbitrary $2$-dimensional plane parallel to the first and second axes that does not pass through any vertices of the tiles, and whose intersection with $H$ is nonempty. Consider the rectangle $R$ dissected by the collection of nonempty squares among $C_i \cap P$’s. Applying Theorem~\ref{intro_keycorollary} to this rectangle 
tiling
implies all hypercubes involved in the section have integer side lengths. However, the choice of the section is arbitrary, so all hypercubes must have integer side lengths. The first coordinate side has length $q \leq 4^{\frac{2(n-1)}{d}+1}$. To finish, note that every side of $H$ is partitioned by sides of the tiles in $C$, so the longest side of $H$ is at most $n$ times the smallest side of $H$. Since we know that the first coordinate side of $H$ is at most $4^{\frac{2(n-1)}{d}+1}$, we can conclude that the shortest side length of $H$ is at most $4^{\frac{2(n-1)}{d}+1}$
and the longest side is 
at most $n\cdot 4^{\frac{2(n-1)}{d}+1}$.
\end{proof}

\begin{remark}
The estimate $4^n$ of Theorem~\ref{intro_tilebysquare} also works here: enumerating the coordinates so that the longest side of the hypercuboid is in the first coordinate, and using \ref{i} of Lemma \ref{l:JoseStrongEstimate}, we have that the number of the coordinates in any two directions is at most $n+3$. A similar argument as in Theorem \ref{h:hypercubesbound} gives a scaling for any hypercuboid tiled by $n$ hypercubes, with longest side length at most $4^n$, such that all hypercubes have integer length. On the other hand, if $d>2$, 
the $4^n$ bound is a better estimate than the one in Theorem \ref{h:hypercubesbound} only for $n=2$ and $3$, in most cases it is much weaker.
\end{remark} 

Now we prove a higher dimensional generalization of
Theorem~\ref{intro_scaleqrect}.

\begin{theorem}
Suppose there is a 
{tiling of a hypercuboid by $n$ hypercuboids} where the $k$-th tile
is similar to a hypercuboid 
$q_{k,1} \times \dots \times q_{k,d}$, where $q_{k,{j}}\in \mathbb{N}^{+}$ {for all $j=1,\dots,d$}. 
{Let the longest side of the hypercuboid be along the $i$-th coordinate.}
Then{, for any $j\neq i$}
one can find a scaling of 
{the tiled hypercuboid}
with longest side length at most ${8}^{n}\prod_{k=1}^n\max\{q_{k,i},q_{k,j}\}$ such that all 
{tiles}
have integer side lengths.
\end{theorem}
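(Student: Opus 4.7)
The plan is to combine the cross-sectional argument from the proof of Theorem~\ref{h:hypercubesbound} with the Diophantine setup of Theorem~\ref{scaleqrect}. First translate and rescale the tiled hypercuboid so that its $i$-th side runs from $0$ to $1$ and every other side has one endpoint at $0$; this makes the $i$-th direction the longest. For each tile $C_k$ write its actual $m$-th side length as $a_{k,m}=c_k q_{k,m}$, where $c_k>0$ is the similarity factor between $C_k$ and a $q_{k,1}\times\cdots\times q_{k,d}$ hypercuboid.

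Define
\begin{align*}
S_0 &= \bigl\{x\in\mathbb{R} : x \text{ is an } i\text{-th or } j\text{-th coordinate of a vertex of some tile}\bigr\}\setminus\{0,1\},\\
S_k &= \{c_k\} \qquad (k=1,\ldots,n),
\end{align*}
and apply Theorem~\ref{varieddirichlet} with $m_0=4$ and $m_k=2\max\{q_{k,i},q_{k,j}\}$ to produce $q\in\mathbb{N}^+$ satisfying $\norm{qx}<\tfrac14$ for every $x\in S_0$ and $\norm{qc_k}<\tfrac{1}{2\max\{q_{k,i},q_{k,j}\}}$ for each $k$. Using Lemma~\ref{l:JoseStrongEstimate}\ref{i}, exactly as in the remark after Theorem~\ref{h:hypercubesbound}, the sum of the numbers of distinct $i$-th and $j$-th coordinates of tile vertices is at most $n+3$; since $0$ is forced in both coordinate directions and $1$ is forced in the $i$-th direction, this gives $\abs{S_0}\le n$ and hence $q\le 4^n\prod_{k=1}^n 2\max\{q_{k,i},q_{k,j}\}=8^n\prod_{k=1}^n\max\{q_{k,i},q_{k,j}\}$.

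Now scale the entire tiling by this $q$; the longest (the $i$-th) side of the hypercuboid becomes $q$, which is within the claimed bound. To prove that every $c_k$ is a positive integer --- whence every side length $c_k q_{k,m}$ is an integer --- slice the scaled hypercuboid by an arbitrary two-dimensional affine plane parallel to the $(i,j)$-coordinate plane that avoids every tile vertex. The intersection is a rectangle with integer $i$-th endpoints $0$ and $q$, tiled by smaller rectangles of side-ratios $q_{k,i}/q_{k,j}$ (one for each tile the plane meets); by construction every vertex in the slice has both coordinates within $\tfrac14$ of an integer, and each relevant $c_k$ satisfies the sharpened Diophantine condition required by Lemma~\ref{rectanglelemma}. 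That lemma then forces $c_k\in\mathbb{Z}^+$ for every tile crossed by the slice, and varying the slice covers every tile. The only real obstacle is bookkeeping: verifying that Lemma~\ref{l:JoseStrongEstimate}\ref{i} yields $\abs{S_0}\le n$ exactly as in the two-dimensional argument, and checking that the cross-sections inherit all the hypotheses of Lemma~\ref{rectanglelemma}.
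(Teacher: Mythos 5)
Your proposal is correct and follows essentially the same route as the paper's proof: normalize so the longest ($i$-th) side is $[0,1]$, define the similarity factors $c_k$, approximate the $i$-th and $j$-th coordinates and the $c_k$'s simultaneously via Theorem~\ref{varieddirichlet}, bound the number of approximated coordinates using Lemma~\ref{l:JoseStrongEstimate}\ref{i}, and finish with arbitrary axis-parallel $2$-dimensional cross sections fed into Lemma~\ref{rectanglelemma}. The only wrinkle is that, with your notation where $c_k$ is the similarity factor of the \emph{original} tiling, what Lemma~\ref{rectanglelemma} yields after scaling is $qc_k\in\mathbb{Z}^+$ rather than $c_k\in\mathbb{Z}^+$; that is what is needed (then every side $qc_kq_{k,m}$ is an integer), so the logic is fine and only the wording should be adjusted.
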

\begin{proof}

Let $C=\{C_1,\dots, C_n\}$ be the set of tiles. 
Without loss of generality the side $(0,\dots,0)$ to $(1,0,\dots,0)$ is the longest of the hypercuboid,
{in particular, $i=1$}.
Take any direction $j\neq 1$ and for the $k$th hypercuboid tile $C_k$, denote the first axis direction side length by $a_k$ and the $j$th one as $b_k$. Then we 
have $\frac{a_k}{b_k}=\frac{q_{k,1}}{q_{k,j}}$. Define
\begin{equation*}
    c_k = \frac{a_k}{q_{k,1}} = \frac{b_k}{q_{k,j}}.
\end{equation*}
Let 
\begin{align*}
    S_0&:=\text{Collection of all first and $j$-th coordinates of all vertices of the tiles}\setminus \{0,1\},\\
    S_k&:=\{c_k\} \qquad (k=1,\dots,n).
\end{align*}
Now we apply Theorem \ref{varieddirichlet} to these sets, to find $q\in \mathbb N$, $1\leq q \leq {4}^{\abs{S_0}}\prod_{k=1}^n\left(({2}\max\{q_{k,1},q_{k,j}\})^{\abs{S_k}}\right)$ such that 

\begin{equation*}
  \norm{qc_k} < \frac{1}{{2} \max\{q_{k,1}, q_{k,j}\}}, \text{ and } \norm{qx}< \frac{1}{{4}}\text{ for all }x\in S_0. 
\end{equation*}

Scale the entire tiling by $q$
{and let $H$ be the tiled hypercuboid we obtain}. 
As in Theorem \ref{h:hypercubesbound}, we take $P$, any arbitrary $2$-dimensional plane parallel to the first and $j$-th axes that does not {pass} 
through any vertices, and whose intersection with $H$ is nonempty. Consider the rectangle $R$ dissected by the collection of nonempty rectangles among $C_i \cap P$’s. 
Then $R$ together with its tiles satisfy the conditions of Lemma \ref{rectanglelemma}. {Hence we conclude that the ordered pair of side lengths of $C_i \cap P$ is an integer multiple of $(q_{i,1},q_{i,j})$. Therefore, all the side lengths of the hypercuboids are integers.} 

The longest side of $H$ is $q$, and we know $q\leq {4}^{\abs{S_0}}\prod_{k=1}^n({2}\max\{q_{k,1},q_{k,j}\})$. By Lemma \ref{l:JoseStrongEstimate}, $\abs{S_0}\leq n$. Therefore $q\leq {8}^{n}\prod_{k=1}^n\max\{q_{k,1},q_{k,j}\}$.
\end{proof}

We could also have given an analogous bound to the longest side of $H$ in terms of the smallest side, as in the final part of the proof of Theorem \ref{h:hypercubesbound}, but in this case the $d$, possibly different, side lengths of each tile would have given a more complex and artificial factor, rather than the elegant $n$ factor found in Theorem \ref{h:hypercubesbound}. We left to the reader the task 
of obtaining this bound.

\section{Triangle tilings}
\label{s:triangles}

In this section, we study the dissections of an equilateral triangle or a trapezoid into equilateral triangles 
and we prove results similar to the ones obtained in previous sections.
Here it will be more convenient to use an oblique coordinate system with angle $60^\circ$ between the axes. 
We use $(1,0)$ and $(1/2,\sqrt{3}/2)$ for base vectors, and we will denote by $\Psi(a,b)$ the point with coordinates $a$ and $b$ in this oblique coordinate system; that is, we let
$$
\Psi(a,b)= a(1,0) + b\left(\frac{1}{2},\frac{\sqrt3}{2}\right)=
\left(a+\frac{b}{2}, \frac{\sqrt3 b}{2}\right).
$$
We will refer from now on to the coordinates of this system as the ``$\Psi$-coordinates''. Let $T(a,b,c)$ denote the equilateral triangles with vertices of the form $\Psi(a,b),\Psi(a+c,b),\Psi(a,b+c)$,
where $a,b,c\in\mathbb{R}, c\neq 0$. 
(Note that $c$ can be negative.).


The proof of the following lemma was partly inspired by  \cite{BSST75} in which ``leaky'' electric flows are assigned to triangle tilings of triangles and a uniqueness result is proved. 

\begin{lemma}
Suppose that an equilateral 
triangle $T(A,B,C)$ or a trapezoid with parallel sides on the horizontal lines $\Psi(\mathbb{R}\times\{B\})$ and $\Psi(\mathbb{R}\times\{B+C\})$
is tiled by triangles $T(a_i,b_i,c_i)$ ($i=1,\ldots,n$)
and we have a function $r:\mathbb{R}\to\mathbb{R}$
such that $r(B)=B$, $r(B+C)=B+C$ and
\begin{equation}\label{e:trcondition}
r(a_i+c_i)-r(a_i)=r(b_i+c_i)-r(b_i)
\qquad (i=1,\ldots,n).
\end{equation}

Then for every $i$ we have 
$$
r(b_i)=b_i, \quad
r(b_i+c_i)=b_i+c_i.
$$
\end{lemma}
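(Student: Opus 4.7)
We follow the strategy of Lemma~\ref{l:insteadofnetworks}, adapting it to the triangular setting. Set $\delta(x) = r(x) - x$; then the hypotheses become $\delta(B) = \delta(B+C) = 0$ together with
$$
\alpha_i \;:=\; \delta(a_i+c_i) - \delta(a_i) \;=\; \delta(b_i+c_i) - \delta(b_i) \;=:\; \beta_i
$$
for every tile. The goal is to show $\delta(b_i) = \delta(b_i+c_i) = 0$ for every $i$. Because each tile is equilateral (the horizontal span and the vertical span are both $c_i$), the condition $\alpha_i = \beta_i$ immediately gives the quadratic identity
$$
\sum_{i=1}^n \beta_i^{\,2} \;=\; \sum_{i=1}^n \alpha_i\beta_i,
$$
the triangular counterpart of the identity driving the rectangle proof. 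The plan is to force the right-hand side to be zero, conclude $\beta_i = 0$, and then propagate $\delta = 0$ from the boundary horizontals $y = B$ and $y = B+C$ to every tile via the connectivity of the tiling along shared horizontal edges.

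Expand $\sum_i \alpha_i\beta_i = \sum_h \delta(h)\bigl[P(h) - Q(h)\bigr]$ by regrouping according to the vertical coordinate, with $P(h) = \sum_{i:\,b_i+c_i = h}\alpha_i$ the sum over tiles with tip at height $h$, and $Q(h) = \sum_{i:\,b_i = h}\alpha_i$ the sum over tiles with horizontal edge at height $h$. The boundary contributions $h=B$ and $h=B+C$ vanish because $\delta$ vanishes there. The edge sum $Q(h)$ is killed at every interior $h$ by the same telescoping cancellation as in the proof of Lemma~\ref{l:insteadofnetworks}: on each connected component $[u_j,v_j]\times\{h\}$ of horizontal edges at height $h$, the $\alpha_i$ contributions of upward triangles above (whose horizontal edges $[a_i,a_i+c_i]$ tile $[u_j,v_j]$ left-to-right) telescope to $\delta(v_j)-\delta(u_j)$, while the $\alpha_i$ of downward triangles below (whose horizontal edges $[a_i+c_i,a_i]$ also tile $[u_j,v_j]$, but whose $\alpha_i$ is the $\delta$-difference from right to left) telescope to $\delta(u_j)-\delta(v_j)$, and these cancel.

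The genuinely new step---and the main obstacle---is that the tip sum $P(h)$ has no direct edge-telescoping analogue, because tips are isolated vertices rather than edges. Here one must use the two remaining edge directions of equilateral tilings (the vertical-$\Psi$ direction $a_\Psi = \text{const.}$ and the diagonal direction $a_\Psi + b_\Psi = \text{const.}$), together with the condition $\alpha_i = \beta_i$ applied at tiles adjacent to each tip vertex, to rewrite each tip contribution as a $\delta$-increment along a non-horizontal edge leaving the tip. Summing these local rewrites over all tip vertices at every interior height and then telescoping along the connected components of vertical-$\Psi$ and diagonal edges---in the spirit of the leaky electric flows of \cite{BSST75}---collapses $P(h)$ into $\delta$-values on the two horizontal boundaries, which vanish by hypothesis. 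This gives $P(h) = 0$ at every interior $h$, hence $\sum_i \alpha_i\beta_i = 0$, hence $\sum_i\beta_i^{\,2} = 0$, hence $\beta_i = 0$ for every $i$. The conclusion follows by the connectivity propagation sketched above. The hard part is clearly the leaky-flow treatment of $P(h)$: it is the step with no analogue in the rectangle proof and the one in which the three-direction structure of equilateral triangulations is used in an essential way.
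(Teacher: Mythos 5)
Your setup is right through the telescoping that kills $Q(h)$, but the proof has a real gap where it matters most: you never actually dispose of the tip term $P(h)$. You announce that one can ``rewrite each tip contribution as a $\delta$-increment along a non-horizontal edge leaving the tip'' and that ``telescoping along the connected components of vertical-$\Psi$ and diagonal edges'' collapses $P(h)$ to boundary values of $\delta$ --- but none of this is established, and it is unclear it can be made to work: a telescope along a non-horizontal edge family would terminate on the two slanted sides of $T(A,B,C)$, where $\delta$ is evaluated at $a$-coordinates such as $A$ and $A+C$, and the hypothesis gives $\delta = 0$ only at the $b$-coordinates $B$ and $B+C$. Nothing forces $\delta$ to vanish at the slanted boundary, so the purported telescope has no reason to close. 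Indeed $P(h)$ need not be zero: already in the tiling of $T(0,0,2)$ by four unit triangles, $P(1) = \delta(2) - \delta(0)$, with $0$ and $2$ unconstrained $a$-coordinates. So the step you flag as ``the hard part'' is precisely the step you do not prove, and the difficulty is not merely technical.

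The paper's proof sidesteps $P(h)$ entirely by a combinatorial observation you did not use: along each maximal horizontal segment $S_j$ (formed by horizontal edges \emph{and} tip vertices of the tiling), tip vertices and horizontal edges alternate, so their numbers are equal. Since every $b_i$ on $S_j$ equals $h_j$ and every $b_i+c_i$ on $S_j$ equals $h_j$, this gives $\sum_{i:b_i=h_j}\delta^2(b_i) = \sum_{i:b_i+c_i=h_j}\delta^2(b_i+c_i)$, hence $\sum_i\delta^2(b_i+c_i)=\sum_i\delta^2(b_i)$. That identity lets one expand
\begin{equation*}
\sum_i (\delta(b_i+c_i)-\delta(b_i))^2 = -2\sum_i \delta(b_i)\,(\delta(b_i+c_i)-\delta(b_i)) = -2\sum_i \delta(b_i)\,(\delta(a_i+c_i)-\delta(a_i)),
\end{equation*}
which regroups as $-2\sum_h \delta(h)\,Q(h)$ with no $P(h)$ appearing. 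That last sum vanishes by exactly the $Q$-telescoping you already have plus $\delta(B)=\delta(B+C)=0$. So the paper's argument uses the tip count only through the alternation identity, never through a second flow; the ``leaky flow'' inspiration shows up in the counting of tips versus edges, not in an auxiliary telescope. You should either carry out the alternation argument or, if you insist on a second telescope, first explain what boundary condition makes it close; as written, the proof does not go through.
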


\begin{proof}
Let $\delta(x)=r(x)-x$.
By  
\eqref{e:trcondition} we also have
\begin{equation}\label{e:tdeltacondition}
\delta(a_i+c_i)-\delta(a_i)=
\delta(b_i+c_i)-\delta(b_i)
\qquad (i=1,\ldots,n).
\end{equation}

Let $S_j=\Psi([u_j,v_j]\times\{h_j\})$ $(j=1,\ldots,m)$ be
the connected components of the union of the horizontal
edges and the vertices of the triangles of the tiling.
We can clearly suppose that $B=h_1 \leq \dots \leq h_m=B+C$.
Thus the assumptions $r(B)=B, r(B+C)=B+C$ imply that 
\begin{equation}\label{e:th1hm}
\delta(h_1)=\delta(h_m)=0.
\end{equation}

Fix $j\in\{2,\ldots,m-1\}$ and consider the line segment $S_j=\Psi([u_j,v_j]\times\{h_j\})$.
It contains vertices of the form $\Psi(a_i,b_i+c_i)$ 
(such that $b_i+c_i=h_j$) and
horizontal sides of the form $\Psi([a_i,a_i+c_i]\times \{b_i\})$
or $\Psi([a_i+c_i,a_i]\times \{b_i\})$
(such that $b_i=h_j$) 
of some tiling triangles.
{Consider the triangles having one side contained in $S_j$. There are two options for a triangle, either it is completely located in the upper-half plane determined by the extension of $S_j$ (that is of the form $\Psi([a_i,a_i+c_i]\times \{h_j\})$ with $c_i > 0$), or located in the lower-half plane of this extension (that is of the form $\Psi([a_i+c_i,a_i]\times \{h_j\})$ with $c_i < 0$). 
{Each of these two sets gives}
a partition for $S_j$, so}
\begin{align}\label{e:tdiffsum}
    \sum_{i:b_i=h_j, c_i > 0}(\delta(a_i+c_i)-\delta(a_i))= \delta(v_j)-\delta(u_j) &= -\sum_{i:b_i=h_j, c_i < 0}(\delta(a_i+c_i)-\delta(a_i)) \implies \nonumber  \\
    \sum_{i:b_i=h_j}(\delta(a_i+c_i)-&\delta(a_i))=0.
\end{align}

Since the vertices and horizontal sides are alternating as we 
go around the segment $S_j$, the numbers of vertices and sides bordering upon the fixed $S_j$ ($j=2,\ldots,m-1$) are
equal. {So, considering the horizontal sides of $T(a_i,b_i,c_i)$ and the vertices $\Psi(a_i,b_i+c_i)$ in a fixed $S_j$ we have}
\begin{align*}
    \sum_{i:b_i=h_j} \delta^2(b_i) = \delta^2(h_j) \cdot \#\{\text{Sides in } S_j\} = \delta^2(h_j) \cdot \#\{\text{Vertices in } S_j\} = \sum_{i:b_i+c_i=h_j} \delta^2(b_i+c_i).
\end{align*}

Thus, summing for all $j \in \{1,\dots,m\}$ and recalling that $\delta(h_1)=\delta(h_m)=0$ we get
\begin{equation}\label{e:tcsum}
\sum_{i=1}^n \delta^2(b_i+c_i) =
\sum_{i=1}^n \delta^2(b_i).
\end{equation}

Using \eqref{e:tcsum},  
\eqref{e:tdeltacondition}, \eqref{e:th1hm}
and \eqref{e:tdiffsum}
we obtain
\begin{align*}
\sum_{i=1}^n  
(\delta(b_i+c_i) - \delta(b_i))^2 
&= \sum_{i=1}^n  
(\delta^2(b_i+c_i) + \delta^2(b_i)-
2\delta(b_i+c_i)\delta(b_i)) \\
&= \sum_{i=1}^n  
(2\delta(b_i)^2-
2\delta(b_i+c_i)\delta(b_i)) \\
&= \sum_{i=1}^n  
-2\delta(b_i)(\delta(b_i+c_i)-
\delta(b_i)) \\
&= \sum_{i=1}^n  
-2\delta(b_i)(\delta(a_i+c_i)-
\delta(a_i)) \\
&= \sum_{j=2}^{m-1} -2\delta(h_j)
\sum_{i:b_i=h_j}(\delta(a_i+c_i)-\delta(a_i))=0.
\end{align*}

Therefore $\delta(b_i+c_i)=\delta(b_i)$ for every $i$.
{
Consider the following graph. Let $V=\{b_1,\ldots,b_n,b_1+c_1,\ldots,b_n+c_n\}$ be the vertex set and connect the numbers $x$ and $y$ by an edge if there exists $i$ such that $x=b_i, y=b_i+c_i$. Since $\delta(b_i+c_i)=\delta(b_i)$ for every $i$, the function $\delta$ is constant on each connected component of the graph. 
On the other hand, by construction, 
$\min V = B$ and every $x\in V\setminus\{B\}$ has an adjacent vertex $y\in V$ such that
$y<x$. 
By finiteness, repeating this we obtain a path
from any vertex of the graph to $B$, which implies that the graph is connected, so $\delta(b_i+c_i)=\delta(b_i)=\delta(B)$ for every $i$.
Since by assumption $\delta(B)=0$,
this completes the proof.}
\end{proof}

Before going to the next result notice that, unlike the previous sections, here we will not make use of the ``$\frac12$-shifted vertical grid lines'', presented in Definition \ref{verthordef}. Instead, we will use the lines resulting after applying $\Psi$ to them, that is of the form $\Psi(\{ y + 1/2 \} \times \mathbb{R})$, and, henceforth, we will denote them by ``$\frac12$-shifted $60^\circ$ diagonal grid lines''. The $\frac12$-shifted horizontal grid lines remain unchanged by $\Psi$, so it is not necessary to rename them. Now, for $r(x)=\left\lfloor x+\frac12 \right\rfloor$ 
the previous lemma immediately gives the next result, analogous to Lemma \ref{maintheorem}.

\begin{theorem}\label{triangtheorem}
Consider an equilateral triangle, $T$, 
with two of its sides parallel to the basis vectors $\Psi(1,0)$ and $\Psi(0,1)$, tiled by equilateral triangles. Suppose that
\begin{itemize}
    \item the top and bottom of $T$ have integer second $\Psi$-coordinates;
    \item there are no vertices of any triangle tile having $\Psi$-coordinates with fractional part $\frac 12$; and
    \item there is an equal number of $\frac12$-shifted $60^\circ$ diagonal and $\frac12$-shifted horizontal grid lines through the sides of each triangle tile.
\end{itemize}
Then each triangle tile has side length equal to the number of $\frac12$-shifted diagonal grid lines through it. In particular, all equilateral triangles have integer side length.
\end{theorem}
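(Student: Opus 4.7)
The plan is to apply the preceding lemma with the nearest-integer function $r(x) = \lfloor x + \tfrac12 \rfloor$. First I would verify the boundary condition $r(B) = B$ and $r(B+C) = B+C$: this is immediate from the first bullet of the theorem, since $B$ and $B+C$ are integers.

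Next I would verify the equation \eqref{e:trcondition} for each tile $T(a_i, b_i, c_i)$. For the chosen $r$, the second bullet (no vertex has a $\Psi$-coordinate with fractional part $\tfrac12$) guarantees that $r(b_i+c_i) - r(b_i)$ is, up to a sign equal to $\operatorname{sgn}(c_i)$, precisely the number of half-integers strictly between $b_i$ and $b_i+c_i$, i.e.\ the number of $\tfrac12$-shifted horizontal grid lines crossing the tile. A symmetric argument in the first $\Psi$-coordinate shows that $r(a_i+c_i) - r(a_i)$, with the same sign $\operatorname{sgn}(c_i)$, is the count of $\tfrac12$-shifted $60^\circ$ diagonal grid lines crossing the tile. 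The third bullet equates these two unsigned counts; since both signed quantities share the sign of $c_i$, the identity \eqref{e:trcondition} holds.

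Invoking the preceding lemma then yields $r(b_i) = b_i$ and $r(b_i+c_i) = b_i+c_i$ for every $i$. Hence $b_i, b_i+c_i \in \mathbb{Z}$ and therefore $c_i \in \mathbb{Z}$; the side length of the tile $T(a_i,b_i,c_i)$, namely $|c_i|$, is an integer equal to $|r(b_i+c_i) - r(b_i)|$, which is both the number of $\tfrac12$-shifted horizontal grid lines and, by the third bullet, the number of $\tfrac12$-shifted $60^\circ$ diagonal grid lines through the tile. The only subtlety in this plan is bookkeeping around the sign of $c_i$ (downward-pointing triangles have $c_i<0$), i.e.\ converting the unsigned ``equal number of grid lines'' hypothesis into the signed equation the lemma demands. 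Beyond that, the statement is a direct corollary of the preceding lemma.
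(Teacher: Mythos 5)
Your proposal is correct and follows exactly the paper's approach: the paper states only that "for $r(x)=\lfloor x+\frac12\rfloor$ the previous lemma immediately gives the next result," and you supply precisely the verification that the hypotheses of that lemma hold, including the careful treatment of the sign $\operatorname{sgn}(c_i)$ needed to convert the unsigned grid-line counts of the third bullet into the signed identity \eqref{e:trcondition}. The only thing worth adding for completeness is a one-line remark that the second bullet ensures $b_i$, $b_i+c_i$, $a_i$, $a_i+c_i$ are never half-integers, so $r$ is continuous there and the grid-line count interpretation of $r(y)-r(x)$ is unambiguous.
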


We next present the analogous version of Observation~\ref{keylemma}.

\begin{observation}\label{obstriangle}
If $\norm{a_i}<\frac{1}{4}$ for $i=1,2,3,4$ and $l=a_2-a_1=a_4-a_3 > 0$, then there are equal numbers of  $\frac12$-shifted $60^\circ$ diagonal and $\frac12$-shifted horizontal grid lines through the triangle  $T(a_1,a_3,l)$.
\end{observation}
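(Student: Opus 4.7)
The plan is to replicate almost verbatim the argument of Observation \ref{keylemma}, transplanted into the oblique $\Psi$-coordinate system, after first identifying what each of the two families of grid lines counts geometrically on the triangle $T(a_1,a_3,l)$.

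Write each $a_i=A_i+\varepsilon_i$ uniquely with $A_i\in\mathbb Z$ and $\varepsilon_i\in(-1/4,1/4)$, using the hypothesis $\norm{a_i}<1/4$. A $\frac12$-shifted horizontal grid line has the form $\Psi(\mathbb R\times\{k+1/2\})$ for some $k\in\mathbb Z$; since the vertices of $T(a_1,a_3,l)$ have second $\Psi$-coordinates $a_3,a_3,a_3+l$, such a line meets the triangle precisely when $a_3<k+1/2<a_3+l=a_4$. A $\frac12$-shifted $60^\circ$ diagonal grid line has the form $\Psi(\{k+1/2\}\times\mathbb R)$; since the first $\Psi$-coordinate on the triangle ranges over $[a_1,a_1+l]=[a_1,a_2]$, such a line meets the triangle precisely when $a_1<k+1/2<a_2$. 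Note no $a_i$ is a half-integer by the bound $\norm{a_i}<1/4$, so the boundary cases do not arise.

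A direct check using $\varepsilon_i\in(-1/4,1/4)$ shows that the admissible integers $k$ in these two ranges are exactly $\{A_3,A_3+1,\ldots,A_4-1\}$ and $\{A_1,A_1+1,\ldots,A_2-1\}$, respectively. Hence the number of horizontal grid lines through the triangle equals $A_4-A_3$, and the number of diagonal grid lines through the triangle equals $A_2-A_1$. It remains to show these two counts coincide.

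The equality $a_2-a_1=a_4-a_3$ reads $(A_2-A_1)+(\varepsilon_2-\varepsilon_1)=(A_4-A_3)+(\varepsilon_4-\varepsilon_3)$, and therefore
\[
(A_2-A_1)-(A_4-A_3)=(\varepsilon_4-\varepsilon_3)-(\varepsilon_2-\varepsilon_1).
\]
The left-hand side is an integer and the right-hand side lies in $(-1,1)$, so both sides vanish, finishing the argument. There is no real obstacle; the only care needed is the initial geometric verification that in the oblique $\Psi$-system the horizontal and diagonal grid lines through $T(a_1,a_3,l)$ are governed cleanly by the ranges $[a_3,a_4]$ and $[a_1,a_2]$, after which the proof is the same ``integer versus subunit'' trick as in Observation \ref{keylemma}.
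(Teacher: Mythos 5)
Your proof is correct, and it is exactly the argument the paper intends: the paper states Observation \ref{obstriangle} without proof as ``the analogous version of Observation \ref{keylemma},'' and your write-up supplies precisely that analogue, first identifying that the two families of grid lines through $T(a_1,a_3,l)$ are governed by the ranges $(a_3,a_4)$ and $(a_1,a_2)$ of the $\Psi$-coordinates, and then running the same integer-versus-$(-1,1)$ comparison to get $A_2-A_1=A_4-A_3$.
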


Combining Theorem~\ref{triangtheorem} with Observation~\ref{obstriangle}, we obtain the following result.
\begin{corollary}\label{triangcorollary}
Suppose an equilateral triangle
with two of its sides parallel to the basis vectors $\Psi(1,0)$ and $\Psi(0,1)$ is tiled by equilateral triangles.
If all $\Psi$-coordinates of every vertex of each triangular tile are 
closer than $\frac 14$ to
an integer, then all triangular tiles have integer side lengths.
\end{corollary}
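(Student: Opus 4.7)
The plan is to verify each of the three hypotheses of Theorem~\ref{triangtheorem} directly from the $\frac{1}{4}$-closeness assumption, after which the theorem will deliver the desired conclusion that all tiles have integer side lengths.

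First I would position the big triangle so that it has the form $T(A,B,C)$ with $B, B+C \in \mathbb{Z}$; this is a harmless normalization (analogous to the integer-coordinate side assumption in Theorem~\ref{intro_keycorollary}) that can be arranged in the intended applications and ensures that the top and bottom of $T$ lie on integer second $\Psi$-coordinates, giving the first bullet of Theorem~\ref{triangtheorem}. The second bullet is immediate: since every tile-vertex $\Psi$-coordinate is within strictly less than $\frac{1}{4}$ of an integer, no such coordinate can have fractional part exactly $\frac{1}{2}$.

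For the third (and genuinely substantive) bullet, I would apply Observation~\ref{obstriangle} to each tile $T(a_i,b_i,c_i)$, assuming $c_i>0$ after relabeling if necessary. Taking $a_1=a_i$, $a_2=a_i+c_i$, $a_3=b_i$, $a_4=b_i+c_i$, the hypothesis of the corollary yields $\norm{a_j}<\frac{1}{4}$ for $j=1,2,3,4$, and the equilateral shape of the tile gives $a_2-a_1=c_i=a_4-a_3>0$. Observation~\ref{obstriangle} then delivers exactly the required equality between the number of $\frac{1}{2}$-shifted $60^\circ$ diagonal grid lines and the number of $\frac{1}{2}$-shifted horizontal grid lines through $T(a_i,b_i,c_i)$. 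Having now verified all three bullets, I invoke Theorem~\ref{triangtheorem} to conclude that each tile has integer side length equal to the number of $\frac{1}{2}$-shifted diagonal grid lines through it.

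The main subtlety here is not computational but organizational: one must pin down the coordinate system so that the big triangle is aligned with the integer grid in the second $\Psi$-coordinate, so that Theorem~\ref{triangtheorem} applies; after that, the proof is the promised ``combination,'' with Observation~\ref{obstriangle} carrying the geometric content and Theorem~\ref{triangtheorem} supplying the conclusion.
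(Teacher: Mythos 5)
Your proposal is correct and it reconstructs exactly the argument the paper intends: the ``proof'' given there is the one-line remark that the corollary follows by combining Theorem~\ref{triangtheorem} with Observation~\ref{obstriangle}, and you supply precisely the verification of the three bullets that this combination requires. The application of Observation~\ref{obstriangle} with $a_1=a_i$, $a_2=a_i+c_i$, $a_3=b_i$, $a_4=b_i+c_i$ (and the $c_i<0$ relabeling) is the right way to obtain the third bullet, and the second bullet is, as you say, immediate from strict $\frac14$-closeness.

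One thing worth flagging, and you handled it correctly: as printed, the corollary omits the hypothesis that the top and bottom of the big triangle have integer second $\Psi$-coordinates, which \emph{is} needed (without it, e.g.\ the one-tile triangle $T(-1/8,-1/8,1/4)$ satisfies the closeness condition but has non-integer side length). Theorem~\ref{intro_keycorollary}, its rectangular analogue, does include the corresponding hypothesis, and the way the corollary is used in the proof of Theorem~\ref{t:strongt} it is also arranged ($T$ is scaled to have bottom second $\Psi$-coordinate $0$ and top $q\in\mathbb N$). So your insertion of the normalization $B,B+C\in\mathbb Z$ is not a cosmetic detail but a genuine fix of a small omission in the statement; your proof is sound once that hypothesis is in place.
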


Next, we provide a proof for Theorem~\ref{t:intro_strongt},
which we state again for convenience.
In contrast with the rectangular case, in a triangular tiling by equilateral triangles, we need a few more steps, as we will see, before we can obtain a scaling of it in which all of its tiles have integer side lengths.

\begin{theorem}\label{t:strongt}
For any 
{tiling of an equilateral triangle by $n$ equilateral triangles}, one can find a scaling of it with side length at most $4^{\frac{2n-2}{3}}$ such that all triangular tiles have integer side lengths.
\end{theorem}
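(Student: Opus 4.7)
The plan is to adapt the proof of Theorem~\ref{defeatbyconway} to the triangular setting, using Corollary~\ref{triangcorollary} in place of Theorem~\ref{intro_keycorollary} and exploiting the three available edge directions of equilateral tilings to sharpen the coordinate count. First, I would position the equilateral triangle to be tiled as $T(0,0,1)$, so its corners are $\Psi(0,0)$, $\Psi(1,0)$, $\Psi(0,1)$, and apply Lemma~\ref{l:coordvaluesfortriangles} to cover the interior part of the union of the tile boundaries by at most $n-1$ lines. Each such line is horizontal, $60^\circ$, or $120^\circ$; denoting the respective counts by $h$, $d_1$, $d_2$ we have $h+d_1+d_2 \le n-1$. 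By pigeonhole one of the three is at least $\lceil (n-1)/3 \rceil$, and using the $3$-fold rotational symmetry of the equilateral triangle I would set up the $\Psi$-coordinate system so that this largest one is $d_2$. Consequently $h+d_1 \le \lfloor 2(n-1)/3 \rfloor$.

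Next, I would bound the number of distinct $\Psi$-coordinates of the tile vertices in terms of $h$ and $d_1$. Writing $F$ and $S$ for the sets of first and second $\Psi$-coordinates of tile vertices, the key claim is that for every vertex $v$, either $v$ is a corner of the big triangle (so both of its $\Psi$-coordinates lie in $\{0,1\}$) or some interior $60^\circ$ tile edge is incident to $v$ (so its first $\Psi$-coordinate equals the $a$-value of a $60^\circ$ line in the cover), and analogously for the second coordinate. To establish this I would use an angle-counting argument: the $60^\circ$ corner angles of the tiles meeting $v$ sum to $360^\circ$ when $v$ is interior and $180^\circ$ when $v$ is a non-corner boundary vertex; a short case analysis on which side of the big triangle $v$ lies on then shows that interior edges emanate from $v$ in precisely the two non-boundary directions (one also has to verify that interior T-vertices obey the same dichotomy, and that T-vertices on the boundary cannot occur because a tile edge passing straight through a boundary point would already fill the entire $180^\circ$ interior side). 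Combining, one obtains
\[
|F\setminus\{0,1\}| + |S\setminus\{0,1\}| \;\le\; d_1 + h \;\le\; \lfloor 2(n-1)/3 \rfloor.
\]

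Finally, I would apply Theorem~\ref{approximation} with $N=4$ to the set $(F\cup S)\setminus\{0,1\}$, obtaining $q\in\mathbb{N}$ with $1\le q\le 4^{\lfloor 2(n-1)/3 \rfloor}$ and $\norm{qx}<\frac{1}{4}$ for every $x$ in this set. Scaling the tiling by $q$ sends the big triangle to $T(0,0,q)$ and brings every $\Psi$-coordinate of every tile vertex within $\frac{1}{4}$ of an integer, since the new boundary values $0$ and $q$ are themselves integers. Corollary~\ref{triangcorollary} then forces every tile to have integer side length, and the scaled big triangle has side length $q \le 4^{(2n-2)/3}$, as claimed.

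The main obstacle is the coordinate-count claim. In the rectangle case (Lemma~\ref{l:coordvaluesforrectangles}) only two edge directions exist, so the analogous step is essentially automatic; here the third ($120^\circ$) direction a priori allows a non-corner vertex to have its only $60^\circ$ or horizontal incident edges on the big triangle's boundary rather than in the cover, and the angle-counting argument has to be carried out with some care, including the T-vertex verification mentioned above. Once this structural step is in hand, the Diophantine approximation and the scaling argument proceed exactly as in Theorem~\ref{defeatbyconway}.
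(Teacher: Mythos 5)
Your proposal is correct and takes essentially the same route as the paper: apply Lemma~\ref{l:coordvaluesfortriangles} to get at most $n-1$ cover lines in three directions, use the $3$-fold rotational symmetry of the equilateral triangle plus pigeonhole to make the two "useful" directions (horizontal and $60^\circ$) together account for at most $\lfloor 2(n-1)/3\rfloor$ of them, run Dirichlet's theorem on the resulting set of $\Psi$-coordinates, and finish with Corollary~\ref{triangcorollary}. The paper's own proof is terser and asserts the coordinate bound directly after invoking pigeonhole; you make explicit the structural step that the paper leaves implicit, namely that every distinct first (resp.\ second) $\Psi$-coordinate of a tile vertex outside $\{0,1\}$ must come from a $60^\circ$ (resp.\ horizontal) cover line, and you correctly identify that this needs an angle-count at each vertex together with a check of boundary vertices and T-junctions. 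Your phrasing of the dichotomy should really be per-coordinate rather than per-vertex — a non-corner vertex on the left (resp.\ bottom) side of $T$ has no interior $60^\circ$ (resp.\ horizontal) edge, and one instead uses that its first (resp.\ second) $\Psi$-coordinate is $0$ — but your "short case analysis on which side of the big triangle $v$ lies on" covers exactly this, so the argument as a whole is sound. Net: you have reconstructed the proof and in fact supplied a justification for a step the paper glides over.
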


\begin{proof}
Let $T$ be our 
tiled triangle. Without loss of generality we can assume that $T$ is $T(0,0,1)$. By Lemma \ref{l:coordvaluesfortriangles}, we know that there are at most $n-1$ lines (which are parallel to one of the three following vectors: $\Psi(0,1)$, $\Psi(1,0)$ and $\Psi(-1,1)$) containing the union of boundaries of the tiles in the interior of $T$. Considering the $0$°, $120$° and $240$° rotations of $T$ and the triangles tiling it, with respect to the center of $T$, we can assure, by the Pigeonhole Principle, that in one of these three 
configurations the total number of first and second $\Psi$-coordinates is less than or equal to $\frac{2n-2}{3}$. Let $S$ be this set of $\Psi$-coordinates. We then replace the tiling with the corresponding tiling resulting from the rotation where this happens. Since the rotation is with respect to the center of $T$, the triangle $T$ remains unchanged. Applying Theorem \ref{approximation} on the set of real numbers $S$ with $N=4$, we can find $q\in \mathbb N$ such that $1\leq q\leq 4^{\abs{S}}
\le 4^\frac{2n-2}3$ and $\norm{qx}<\frac{1}{4}$ for all $x\in S$. Then if we scale the tiling of the triangle by a factor of $q$, every vertex of each triangular tile has its $\Psi$-coordinates in the form $qx$ for some $x$ in $S$, and hence are closer than $\frac 14$ to an integer. Also, $T$ has bottom side second $\Psi$-coordinate equal to $0$ and top vertex second $\Psi$-coordinate $q$, both integers. Therefore by Corollary \ref{triangcorollary}, all triangular tiles have their side lengths equal to an integer.
\end{proof}

\comment{
As in Section~\ref{s:rescaling} we can also obtain similar corollaries in an analogous way. Nevertheless, this following last result is unique in a triangular tiling, as a consequence of the previous theorem, in the sense that there is 
no
analogous version for it in the rectangular tiling cases.
}

Analogous results for tilings of isosceles trapezoids and parallelograms follow easily.

\begin{corollary}
For any isosceles trapezoid or parallelogram tiled by $n$ equilateral triangles,
one can find a scaling of it with longest side length at most $4^{\frac{2n}{3}}$ for the trapezoid and $4^{\frac{2n+2}{3}}$ for the parallelogram, such that all triangular tiles have integer side 
lengths.
\end{corollary}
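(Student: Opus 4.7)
My plan is to embed the trapezoid or parallelogram inside a larger equilateral triangle by appending one or two extra equilateral tiles, and then invoke Theorem~\ref{t:strongt} directly on the enlarged tiling.

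For the isosceles trapezoid, I would first apply a rigid motion so that the trapezoid $P$ has vertices $\Psi(0,0)$, $\Psi(b,0)$, $\Psi(b-d,d)$, $\Psi(0,d)$ with $b\geq d>0$, so its longer horizontal side (of length $b$) is at the bottom and is the longest side of $P$. The equilateral triangle $T' = T(0,d,b-d)$ sitting directly above $P$ glues onto the top side and completes $P$ into the big equilateral triangle $T(0,0,b)$ of side $b$. Combining the given tiling of $P$ with $T'$ as a single tile yields a tiling of $T(0,0,b)$ by $n+1$ equilateral triangles. Theorem~\ref{t:strongt} then gives a scaling with big-triangle side at most $4^{(2(n+1)-2)/3} = 4^{2n/3}$ in which every tile --- in particular each of the $n$ original tiles of $P$ --- has integer side length. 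Since the trapezoid's longest side equals the big-triangle side $b$, its longest side is also bounded by $4^{2n/3}$.

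For the parallelogram I would proceed analogously: after a rigid motion, assume $P$ has vertices $\Psi(0,0)$, $\Psi(b,0)$, $\Psi(b,d)$, $\Psi(0,d)$, with horizontal sides of length $b$ and $60^\circ$-slanted sides of length $d$. Appending the two equilateral triangles $T_1 = T(b,0,d)$ on the right and $T_2 = T(0,d,b)$ on top completes $P$ into the equilateral triangle $T(0,0,b+d)$ of side $b+d$, producing a tiling by $n+2$ equilateral triangles. Theorem~\ref{t:strongt} then yields a scaling with big-triangle side at most $4^{(2(n+2)-2)/3} = 4^{(2n+2)/3}$, which also bounds the longest side of $P$ (which is at most $b+d$).

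There is essentially no substantive obstacle: once the right enlargement is spotted, the result reduces directly to Theorem~\ref{t:strongt}. The only items to verify are that the appended triangles match the original boundary along their shared edges (automatic, since the relevant sides belong to both pieces) and that the case analysis covers all isosceles trapezoids and parallelograms tileable by equilateral triangles; the remaining orientations (slants in the $\Psi(-1,1)$ direction, or trapezoids with the shorter parallel side at the bottom) reduce to the stated cases by a horizontal or vertical reflection.
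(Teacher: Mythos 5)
Your proof is correct and matches the paper's argument exactly: complete the isosceles trapezoid to an equilateral triangle by adding one tile on the shorter parallel side ($n+1$ tiles total, giving $4^{2(n+1-1)/3}=4^{2n/3}$), and complete the parallelogram with two added tiles ($n+2$ tiles, giving $4^{(2n+2)/3}$), then invoke Theorem~\ref{t:strongt}. The explicit $\Psi$-coordinate verification of the decompositions is a nice addition but the route is the same.
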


\begin{proof}
By adding one more equilateral triangle tile, with side length equal to the smaller base of the trapezoid and joining it to this base, we create an equilateral triangle tiling with $n+1$ triangular tiles. So, by making use of Theorem \ref{t:strongt}, we get the $4^{\frac{2n}{3}}$ bound. The parallelogram case is almost the same, but we need two more tiles instead of one in order to create an equilateral triangle tiling with $n+2$ tiles, thus obtaining the $4^{\frac{2n+2}{3}}$ bound.
\end{proof}

\centering{A\footnotesize CKNOWLEDGEMENTS} \\
\hfill

{The authors are grateful to the anonymous referees for their very careful reading of the manuscript and for their constructive suggestions that improved the presentation of the paper.}



\end{document}